\documentclass[a4paper,12pt]{article} 
\usepackage{IEEEtrantools}

\usepackage{mathtools, nccmath}
\usepackage{latexsym}
\usepackage{amsfonts}
\usepackage{amssymb}
\usepackage{eqnarray, amsmath,amsfonts,amsthm}
\usepackage{mathrsfs}
\usepackage{enumerate}
\usepackage{dsfont}
\usepackage{hyperref}
\usepackage{graphicx}
\usepackage{epstopdf}
\usepackage{tikz}
\usepackage{physics}

\usetikzlibrary{shapes,arrows.meta,positioning}
\usepackage{cleveref}
\usepackage{tikz-cd}
\usepackage{psfrag}

\usepackage{multirow}
\usepackage{tikz}
\usetikzlibrary{arrows,decorations.markings}
\usepackage{bookmark}
\usepackage{hyperref}
\hypersetup{
colorlinks  = true,
citecolor = blue
}

\newtheorem{thm}{Theorem}[section]
\newtheorem{lmma}[thm]{Lemma}
\newtheorem{ppsn}[thm]{Proposition}
\newtheorem{crlre}[thm]{Corollary}

\theoremstyle{definition}
\newtheorem{dfn}[thm]{Definition}

\newtheorem{rmrk}[thm]{Remark}

\newcommand{\bbn}{\mathbb{N}}

\def \qed { \mbox{}\hfill
$\Box$\vspace{1ex}}

\providecommand{\Dref}[1]{Definition~\ref{#1}}
\providecommand{\Tref}[1]{Theorem~\ref{#1}}
\providecommand{\Lref}[1]{Lemma~\ref{#1}}
\providecommand{\Pref}[1]{Proposition~\ref{#1}}
\providecommand{\Sref}[1]{Section~\ref{#1}}
\providecommand{\Eref}[1]{(\ref{#1})} 

\title{Holomorphic Jet Modules and Holomorphic Connections for Noncommutative Complex Curves}

\begin{document}

\tikzset{->-/.style={decoration={
markings,
mark=at position #1 with {\arrow{>}}},postaction={decorate}}}
  \tikzset{-<-/.style={decoration={
  markings,
  mark=at position #1 with {\arrow{<}}},postaction={decorate}}}

\author{\sc{Indranil Biswas, Satyajit Guin, Pradip Kumar}}
\date{}
\maketitle

\hypersetup{linkcolor=blue}
\bigskip

\begin{abstract}
We extend Atiyah's holomorphic jet bundle formalism to holomorphic vector bundles over noncommutative algebras endowed with a bigraded differential calculus truncated at bidegree $(1,1)$; such structures are referred to as noncommutative complex curves. For a holomorphic vector bundle $(E,\,\overline{\nabla}_E)$ over such an algebra $\mathcal{A}$, we construct a canonical holomorphic structure $\overline{\nabla}_J$ on the first jet module $J_E^1\,$, making the jet sequence
\[
0\longrightarrow \Omega^{1,0}(\mathcal{A})\otimes_{\mathcal A}E
\longrightarrow J_E^1
\longrightarrow E
\longrightarrow 0
\]
exact in the holomorphic category. The assignment $(E,\,\overline\nabla_E)\,\rightsquigarrow\,(J_E^1,\,\overline\nabla_J)$ defines an endofunctor on the category of holomorphic vector bundles over $\mathcal{A}$. We define the notion of holomorphic connection in this setting and prove that a holomorphic vector bundle admits a holomorphic connection if and only if the above jet sequence splits in the holomorphic category, or equivalently, if and only if its Atiyah class vanishes. This yields a noncommutative analogue of Atiyah's classical correspondence for Riemann surfaces. Finally, we specialize to the quantum projective line $\mathbb{CP}_q^1\,$ and determine when $\overline{\nabla}_J$ defines a bimodule connection, assuming that $\overline{\nabla}_E$ does so.
\end{abstract}
\bigskip

{\bf AMS Subject Classification No.:} {58B34, 46L87, 14A22}

{\bf Keywords.} Complex structure, holomorphic connection, quantum jet bundle, noncommutative complex curve, quantum projective line.


\section{Introduction}\label{sec1}

Let $E$ be a holomorphic vector bundle over a complex manifold $X$; denote its holomorphic
cotangent bundle by $\Omega_X^1$. The first jet sequence
\[
0\longrightarrow \Omega_X^1\otimes E\longrightarrow J^1(E)\longrightarrow E\longrightarrow 0
\]
is a fundamental object in complex geometry, encoding the first-order holomorphic data of $E$ in a canonical and functorial manner. Its splitting as a sequence of holomorphic vector bundles is intimately tied to the existence of a holomorphic connection on $E$. Atiyah showed in his foundational work \cite{Ati57} that a holomorphic vector bundle $E$ admits a holomorphic connection if and only if the jet sequence splits holomorphically, or equivalently, if and only if the obstruction class $\mathrm{At}(E)\in H^1(X,\,\Omega_X^1\otimes\mathrm{End}(E))$, called the Atiyah class for $E$, vanishes. This cohomological obstruction governs the deviation of $E$ from admitting a holomorphic connection and carries a deep geometric content. The Atiyah class represents the Chern classes of $E$ in Hodge cohomology, making it a bridge between the holomorphic and topological data of the bundle. The situation is particularly transparent on compact complex curves. Here, $\Omega_X^1$ is the canonical line bundle of degree $2g-2$, where $g$ is the genus of $X$. The group $H^1(X,\,\Omega_X^1\otimes\mathrm{End}(E))$ is finite-dimensional and computable via Serre duality, and the jet sequence provides an efficient framework for studying extensions, holomorphic differential operators, and the moduli space of bundles with connection.

In noncommutative complex geometry, which is in its infancy compared to the classical theory, a systematic holomorphic theory of jet bundles has not been developed, even for basic noncommutative spaces. Recently, the notion of jet modules over noncommutative spaces has been introduced in \cite{MSa23} and \cite{FMW25}. The aim of this article is to initiate the study of such a theory for holomorphic vector bundles over noncommutative spaces. Here we restrict our attention to ``noncommutative complex curves". We begin by explaining this non-standard terminology, which is motivated by analogy with the classical theory of Riemann surfaces from the perspective of complex structure, without implying any deeper structural correspondence.

Recall that a differential structure on a noncommutative algebra $\mathcal{A}$ is given by a pair $(\Omega^\bullet,\,d)$, where $\Omega^\bullet\,=\,\bigoplus_{n\ge 0}\,\Omega^n(\mathcal{A})$ is a differential graded algebra and $d\,:\,\Omega^n(\mathcal{A})\,\longrightarrow\,\Omega^{n+1}(\mathcal{A})$ is a differential that satisfies the graded Leibniz rule. A complex structure on a noncommutative algebra $\mathcal{A}$ (see \Sref{sec2}) equipped with a differential calculus is given by a decomposition $\Omega^n(\mathcal{A})\,=\,\bigoplus_{p+q=n}\Omega^{p,q}(\mathcal{A})$ for all $n\,\in\,\bbn$ together with differentials $\partial\,:\,\Omega^{\bullet,\bullet}(\mathcal{A})\,\longrightarrow\,\Omega^{\bullet+1,\bullet}(\mathcal{A})$ and $\overline\partial\,:\,\Omega^{\bullet,\bullet}(\mathcal{A})\,\longrightarrow\,\Omega^{\bullet,\bullet+1}(\mathcal{A})$ such that $d\,=\,\partial+\overline\partial$. A noncommutative complex curve is a noncommutative algebra $\mathcal{A}$ over $\mathbb{C}$ equipped with a complex structure truncated at the bidegree $(1,\,1)$. This means that
\[
\Omega^\bullet\ =\ \mathcal{A}\oplus\big(\Omega^{1,0}(\mathcal{A})\oplus\Omega^{0,1}(\mathcal{A})\big)\oplus\Omega^{1,1}(\mathcal{A})
\]
and all higher-degree forms vanish. We also need to impose additional assumptions that $\Omega^{1,0}(\mathcal{A})$ is finitely generated and projective as a left module over $\mathcal{A}$, and the wedge map
\[
\wedge\ :\ \Omega^{0,1}(\mathcal{A})\otimes_{\mathcal{A}}\Omega^{1,0}(\mathcal{A})\
\longrightarrow\ \Omega^{1,1}(\mathcal{A})
\]
is an $\mathcal{A}$-bimodule isomorphism (see \Dref{NC complex curve}). The latter condition is slightly weaker than the notion of a \emph{factorisable calculus} considered in \cite{BM20}. The motivating example is the quantum projective line $\mathbb{CP}_q^1$ equipped with the left covariant $2D$-calculus obtained by restricting Woronowicz's $3D$-calculus on $SU_q(2)$ \cite{KLvS11}.

In noncommutative geometry, a smooth vector bundle over a noncommutative algebra $\mathcal{A}$ is defined to be a finitely generated projective left $\mathcal{A}$-module. If $\mathcal{A}$ is equipped with a complex structure, we say that $E$ is a holomorphic vector bundle over $\mathcal{A}$ if it is finitely generated projective left $\mathcal{A}$-module and there is a flat $\overline\partial$-connection $\overline{\nabla}_E\,:\,E\,\longrightarrow\,\Omega^{0,1}(\mathcal{A})\otimes_{\mathcal{A}}E$. Thus, a holomorphic vector bundle over $\mathcal{A}$ is a pair $(E,\,\overline{\nabla}_E)$ of the above type. Unlike classical complex geometry, in the noncommutative setting the theory of holomorphic vector bundles is still in its initial stage of its development. For example, in the case of the  quantum projective line $\mathbb{CP}_q^1$,\, it is unknown whether every holomorphic vector bundle over it decomposes as a direct sum of holomorphic line bundles, like in the classical case which was shown by Grothendieck. To emphasize the level of difficulty, we remark that each line bundle over the quantum projective line admits multiple holomorphic structures up to gauge equivalence --- unlike the classical case --- as constructed recently in \cite{GLRM25} and further analyzed in \cite{BGK26}.
\bigskip

\noindent\textbf{Main Results.} Given a holomorphic vector bundle $(E,\,\overline\nabla_E)$ over a noncommutative complex curve $\mathcal{A}$, we define its first jet module $J_E^1$ from the holomorphic point of view following \cite{MSa23}. Using the assumption that $\Omega^{1,0}(\mathcal{A})$ is finitely generated projective as left module over $\mathcal{A}$, it is deduced that $J_E^1$ again becomes a finitely generated projective left $\mathcal{A}$-module, and hence $J_E^1$ is a smooth vector bundle over $\mathcal{A}\,$. Then, we construct a holomorphic structure on $J_E^1$ lifting the $\overline\partial$-connection $\overline\nabla_E$ on $E$ to a $\overline\partial$-connection $\overline\nabla_J$ on $J_E^1\,$. The association $(E,\,\overline{\nabla}_E)\,\rightsquigarrow\,(J_E^1,\,\overline{\nabla}_J)$ is functorial. Furthermore, the associated jet sequence
\begin{IEEEeqnarray}{lCl}\label{intro split}
0\,\longrightarrow\, \Omega^{1,0}(\mathcal{A})\otimes_{\mathcal{A}}E\,\longrightarrow\, J^1_E\,
\longrightarrow\, E\,\longrightarrow \,0
\end{IEEEeqnarray}
is a short exact sequence in the holomorphic category. Its extension class $$At(E)\ \in\ \mathrm{Ext}_{\mathrm{hol}}^1(E,\,\Omega^{1,0}\otimes_{\mathcal{A}}E)$$ is the Atiyah class
for $(E,\,\overline{\nabla}_E)$.

A natural question that arises is when $At(E)$ vanishes, or equivalently, when the short exact sequence in \eqref{intro split} splits in the holomorphic category. In the classical case, the class $At(E)$ vanishes precisely when the holomorphic vector bundle $E$ admits a holomorphic connection. We define the notion of a holomorphic connection (see \Dref{hol con}) in the setting of holomorphic vector bundles over noncommutative complex curves, and our main result is that a holomorphic vector bundle over a noncommutative complex curve admits a holomorphic connection if and only if the holomorphic jet sequence in \Eref{intro split} splits in the holomorphic category. This provides a quantum analogue of the classical correspondence between holomorphic connections and holomorphic splittings of the jet sequence for Riemann surfaces \cite{Ati57}.

Finally, we specialize to the quantum projective line $\mathbb{CP}_q^1$ which exhibits a special feature. Suppose that $E$ is a $\mathcal{A}(\mathbb{CP}_q^1)$-bimodule, where $\mathcal{A}(\mathbb{CP}_q^1)$ is the coordinate algebra of the quantum projective line, and $E$ is equipped with a holomorphic structure $\overline{\nabla}_E$ that is a bimodule connection (for example, line bundles over the quantum projective line \cite{KLvS11}). We identify the compatibility condition between certain bimodule maps that ensures that the lifted holomorphic structure $\overline{\nabla}_J$ on the jet module $J_E^1$ is again a bimodule connection. Note that being a bimodule connection is a property of a left connection, rather than additional independent data. An application of holomorphic connection is also shown in this case, namely, we show that, under the $SU_q(2)$-invariance, the holomorphic jet sequence on the line bundle $\mathcal{L}_n$ over the quantum projective line splits holomorphically if and only if $n\,=\,0$, which is in direct parallelism with Atiyah's result for the case of the Riemann sphere.
\medskip

The organization of the article is as follows. \Cref{sec2} reviews complex structures and holomorphic vector bundles over noncommutative algebras. In \Cref{Sec3}, we construct the holomorphic jet bundle and establish the functoriality. \Cref{Sec4} introduces the notion of holomorphic connection in this setting and proves the equivalence between holomorphic connections and holomorphic splittings of the jet sequence. \Cref{sec5} treats the bimodule setting over the quantum projective line $\mathbb{CP}_q^1$ and presents an application to the line bundles over it.
\bigskip

\noindent\textbf{Acknowledgements.} S. Guin thanks Réamonn Ó Buachalla for a useful discussion. I. Biswas is partially supported by a J. C. Bose Fellowship (JBR/2023/000003).


\section{Preliminaries}\label{sec2}

Let $\mathcal{A}$ be a (noncommutative) unital algebra over $\mathbb{C}$. A differential calculus over $\mathcal{A}$ is a pair $(\Omega^\bullet,\,d)$ such that $\Omega^\bullet\,=\,\bigoplus_{n=0}^\infty\Omega^n$, with $\Omega^0\,:=\,\mathcal{A}$, is a graded associative algebra, and $d\,:\,\Omega^n\,\longrightarrow\,\Omega^{n+1}$ is a linear map satisfying $d^2\,=\,0$ and
\[
d(\omega\eta)\ =\ (d\omega)\eta+(-1)^n\omega\,d\eta
\]
for all $\omega\,\in\,\Omega^n$ and $\eta\,\in\,\Omega^\bullet$. We have $\Omega^\bullet$ is generated as an algebra by $\mathcal{A}$ and $d\mathcal{A}$, equivalently,
\[
\Omega^n\ =\ \mathrm{span}_{\mathbb{C}}\{a_0da_1\wedge\ldots\wedge da_n:a_j\in\mathcal{A}\}
\]
for all $n\,\in\,\bbn$. We use `$\wedge$' to denote the multiplication between elements of a differential calculus when both are of order greater than $0$. A differential map between two differential calculi $(\Omega^\bullet,\,d)$ and $(\widetilde{\Omega}^\bullet,\,\widetilde{d})$, defined over the same algebra $\mathcal{A}$, is a bimodule map $\Phi\,:\,\Omega^\bullet\,\longrightarrow\,\widetilde{\Omega}^\bullet$ that intertwines the differentials.

If $\mathcal{A}$ is a $\ast$-algebra, we call a differential calculus $(\Omega^\bullet,\,d)$ to be a $\ast$-differential calculus if there exists an involution $\ast\,:\,\Omega^\bullet\,\longrightarrow\,\Omega^\bullet$ such that
\begin{enumerate}[$(i)$]
\item $(d\omega)^*\,=\,d(\omega^*)$ for all $\omega\,\in\,\Omega^\bullet$, and

\item $(\omega\wedge\eta)^*\,=\,(-1)^{k\ell}\,\eta^*\wedge\omega^*$ for all $\omega\,\in\,\Omega^k$ and $\eta\,\in\,\Omega^\ell$.
\end{enumerate}

We now recall the definition of a noncommutative complex structure.

\begin{dfn}[\cite{BM20}]
By an almost complex structure on $(\Omega^\bullet,\,d)$ we mean a map $J\,:\,\Omega^n\,\longrightarrow\,\Omega^n$ for all $n$ such that the following four conditions hold:
\begin{enumerate}[$(i)$]
\item $J(\xi\wedge\eta)\,=\,J(\xi)\wedge\eta+\xi\wedge J(\eta)$ for all $\xi,\,\eta\,\in\,\Omega^\bullet$;
\item $J$ is identically $0$ on $\mathcal{A}$;
\item $J^2\,=\, -\mathrm{id}$ on $\Omega^1$; and
\item $J(\xi^*)\,=\,(J\xi)^*$ for $\xi\,\in\,\Omega^1$.
\end{enumerate}
\end{dfn}

When there is an almost complex structure on $(\Omega^\bullet,\,d)$, we have
\[
\Omega^1\ =\ \Omega^{1,0}\oplus\Omega^{0,1}
\]
where
\[
\Omega^{1,0}\,:=\,\{\xi\in\Omega^1\,:\,J\xi=i\xi\}\,,\,\,
\,\Omega^{0,1}\,:=\,\{\xi\in\Omega^1\,:\,J\xi=-i\xi\}\,,
\]
that is, $\Omega^1$ splits as a direct sum of the $\pm \sqrt{-1}$ eigenspaces of $J$. This is extended to more general forms by
\[
\Omega^{p,q}\ :=\ \{\xi\,\in\,\Omega^{p+q}\,\big\vert\,\,J\xi\,=\,(p-q)\,i\,\xi\}
\]
for integers $p,\,q\,\ge\, 0$. As $J$ is a derivation, we have $\Omega^{p,q}\wedge\Omega^{p^\prime,q^\prime}\,\subseteq\,\Omega^{p+p^\prime,q+q^\prime}$, and a direct sum decomposition $\Omega^n\,=\,\bigoplus_{p+q=n}\,\Omega^{p,q}$. Associated with this direct sum we have projections $\pi^{p,q}\,:\,\Omega^{p+q}\,\longrightarrow\,\Omega^{p,q}$, and using these, we have operators analogous
to the classical $\partial$ and $\overline\partial$ defined by
\[
\partial\ =\ \pi^{p+1,q}\circ d\ :\ \Omega^{p,q}\ \longrightarrow\ \Omega^{p+1,q}\,,
\]
\[
\overline\partial\ =\ \pi^{p,q+1}\circ d\ :\ \Omega^{p,q}\ \longrightarrow\ \Omega^{p,q+1}.
\]

\begin{lmma}[\cite{BM20}]
The following five conditions are equivalent:
\begin{enumerate}[$(i)$]
\item $0\,=\,\overline\partial^2\,:\,\mathcal{A}\, \longrightarrow\,\Omega^2$;
\item $0\,=\,\partial^2\,:\,\mathcal{A} \, \longrightarrow\,\Omega^2$;
\item $d\,=\,\partial+\overline\partial\,:\,\Omega^1\, \longrightarrow\,\Omega^2$;
\item $d\Omega^{1,0}\ \subseteq\ \Omega^{2,0}\oplus\Omega^{1,1}$;
\item $d\Omega^{0,1}\ \subseteq\ \Omega^{1,1}\oplus\Omega^{0,2}$.
\end{enumerate}
An almost complex structure $J$ is called integrable if the above equivalent conditions hold.
\end{lmma}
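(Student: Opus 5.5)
The plan is to reduce every condition to statements about $d$ applied to functions and $1$-forms, using three ingredients: the decomposition $\Omega^2=\Omega^{2,0}\oplus\Omega^{1,1}\oplus\Omega^{0,2}$; the fact that $\Omega^1$ is spanned by elements $a\,db$; and $d^2=0$. For $a\in\mathcal{A}$ we have $da=\partial a+\overline\partial a$, because on $\Omega^0$ the projections $\pi^{1,0}$ and $\pi^{0,1}$ add up to the identity on $\Omega^1$. Applying $d$ to this gives
\[
0\ =\ d\partial a + d\overline\partial a .
\]
We then decompose $d\partial a$ and $d\overline\partial a$ into their $(2,0),(1,1),(0,2)$ components. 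Since $\partial a\in\Omega^{1,0}$ and $\overline\partial a\in\Omega^{0,1}$, the $(2,0)$-component of $d\partial a$ is $\partial^2 a$, and the $(0,2)$-component of $d\overline\partial a$ is $\overline\partial^2 a$. Taking the $(0,2)$- and $(2,0)$-components of the identity above yields
\[
\pi^{0,2}(d\partial a)\ =\ -\overline\partial^2 a,\qquad \pi^{2,0}(d\overline\partial a)\ =\ -\partial^2 a .
\]

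Next I prove (iv)$\Leftrightarrow$(ii), and symmetrically (v)$\Leftrightarrow$(i). Condition (iv) says $\pi^{0,2}\circ d=0$ on $\Omega^{1,0}$. Every element of $\Omega^{1,0}$ is a sum of terms $a\,\partial b$: write $\xi=\sum a_i\,db_i$ and project to $(1,0)$, using that $\pi^{1,0}$ is a left-module map since $J$ is a derivation vanishing on $\mathcal{A}$. Then $d(a\,\partial b)=da\wedge\partial b + a\,d\partial b$. The term $da\wedge\partial b$ lies in $\Omega^{2,0}\oplus\Omega^{1,1}$ because $\Omega^{p,q}\wedge\Omega^{p',q'}\subseteq\Omega^{p+p',q+q'}$. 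Hence $\pi^{0,2}d(a\,\partial b)=a\,\pi^{0,2}(d\partial b)=-a\,\overline\partial^2 b$. So (iv) holds iff $\overline\partial^2=0$ on $\mathcal{A}$, which is (i). In the same way, (v) holds iff $\partial^2=0$ on $\mathcal{A}$, which is (ii).

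To close the loop I show (i)$\Leftrightarrow$(ii) and bring in (iii). Condition (iii) says that $d\xi$ has no $(2,0)$-component for $\xi\in\Omega^{0,1}$ and no $(0,2)$-component for $\xi\in\Omega^{1,0}$, since $\partial+\overline\partial$ only captures the $(p+1,q)$ and $(p,q+1)$ parts. So (iii) is equivalent to (iv)$\wedge$(v), hence to (i)$\wedge$(ii). The remaining step is (i)$\Leftrightarrow$(ii), and this is where the $\ast$-structure enters through condition (iv) of the almost complex structure. Because $J(\xi^*)=(J\xi)^*$ and $i$ is conjugated, $\ast$ swaps $\Omega^{1,0}$ and $\Omega^{0,1}$, and by the wedge rule it swaps $\Omega^{2,0}$ and $\Omega^{0,2}$. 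Combined with $(d\omega)^*=d(\omega^*)$, this gives $(\partial a)^*=\overline\partial(a^*)$, and then $(\partial^2 a)^*=\overline\partial^2(a^*)$. So one vanishes identically iff the other does.

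I expect the main obstacle to be the $\ast$-argument, in particular checking that $\ast$ really maps $\Omega^{2,0}$ to $\Omega^{0,2}$. This needs $J(\omega^*)=(J\omega)^*$ on $\Omega^2$, which I will derive from the derivation property on products of $1$-forms, keeping track of the sign in $(\omega\wedge\eta)^*=-\eta^*\wedge\omega^*$. If $\ast$ were unavailable, (i)$\Leftrightarrow$(ii) would not follow from the other steps, so this step is essential.
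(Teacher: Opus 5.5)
The paper gives no proof of its own here (it quotes the lemma from Beggs--Majid), and your argument is correct and is essentially the standard one. Splitting $0=d^2a=d\partial a+d\overline\partial a$ by type gives $\pi^{0,2}(d\partial a)=-\overline\partial^2a$ and $\pi^{2,0}(d\overline\partial a)=-\partial^2a$. The spanning of $\Omega^{1,0}$ by elements $a\,\partial b$ (and of $\Omega^{0,1}$ by $a\,\overline\partial b$) then gives (i)$\Leftrightarrow$(iv) and (ii)$\Leftrightarrow$(v). Condition (iii) is exactly (iv) and (v) together, and the $\ast$-structure gives $(\partial^2a)^*=\overline\partial^2(a^*)$.

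You are also right that the $\ast$-compatibility of $J$ is needed only for (i)$\Leftrightarrow$(ii), and genuinely needed there. On $\mathbb{C}[x_1,\dots,x_4]$ with $\Omega^{1,0}$ spanned by $dx_1,dx_2$ and $\Omega^{0,1}$ by $dx_3+x_1dx_2,\,dx_4$, one has $\overline\partial^2=0$ but $\partial^2x_3=-dx_1\wedge dx_2\neq0$.
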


Suppose $(\Omega^\bullet,\,d)$ is a differential $\ast$-calculus on a $\ast$-algebra $\mathcal{A}$ with an integrable almost complex structure. Then $d\,=\,\partial+\overline\partial$ on all $\Omega^{p,q}$, or equivalently, $$d(\Omega^{p,q})\ \subseteq\ \Omega^{p+1,q}\oplus\Omega^{p,q+1}$$ (see \cite[Cor. 7.4]{BM20}).

\begin{ppsn}[\cite{BM20}]
Suppose $(\Omega^\bullet,\,d)$ is a differential $*$-calculus with an integrable almost complex structure $J$. Then $\partial$ and $\overline\partial$ are graded derivations with $\partial^2\,=\,0$,\, $\overline\partial^2\,=\,0$ and $\partial\overline\partial+\overline\partial\partial\,=\,0$. Also, $(\overline\partial\xi)^*\,=\,\partial(\xi^*)$ and $(\partial\xi)^*\,=\,\overline\partial(\xi^*)$ for all $\xi\,\in\,\Omega^\bullet$.
\end{ppsn}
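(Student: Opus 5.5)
We take as given the standing facts recalled above: for an integrable almost complex structure on a differential $*$-calculus, $d\,=\,\partial+\overline\partial$ on every $\Omega^{p,q}$ (\cite[Cor. 7.4]{BM20}), and $\Omega^n\,=\,\bigoplus_{p+q=n}\Omega^{p,q}$ is a direct sum with $\Omega^{p,q}\wedge\Omega^{p',q'}\subseteq\Omega^{p+p',q+q'}$. The whole argument is bookkeeping of bidegrees in $d^2\,=\,0$ and in the Leibniz rule for $d$, together with the compatibility of $*$ with $J$.

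\emph{Derivation property.} Take $\xi\in\Omega^{p,q}$ and $\eta\in\Omega^{p',q'}$, with $\xi\wedge\eta\in\Omega^{p+p',q+q'}$. The Leibniz rule gives
\[
d(\xi\wedge\eta)\ =\ (\partial\xi+\overline\partial\xi)\wedge\eta+(-1)^{p+q}\xi\wedge(\partial\eta+\overline\partial\eta).
\]
Project onto $\Omega^{p+p'+1,q+q'}$. Since wedge adds bidegrees, the $\partial$ terms land there and the $\overline\partial$ terms do not. Hence $\partial(\xi\wedge\eta)\,=\,\partial\xi\wedge\eta+(-1)^{p+q}\xi\wedge\partial\eta$. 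The same projection onto $\Omega^{p+p',q+q'+1}$ gives the rule for $\overline\partial$. Both identities extend to all forms by linearity.

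\emph{Square-zero identities.} On $\Omega^{p,q}$ we have
\[
0\ =\ d^2\ =\ (\partial+\overline\partial)^2\ =\ \partial^2+(\partial\overline\partial+\overline\partial\partial)+\overline\partial^2 .
\]
Here it matters that $d\,=\,\partial+\overline\partial$ also holds on $\Omega^{p+1,q}$ and $\Omega^{p,q+1}$. The three summands lie in $\Omega^{p+2,q}$, $\Omega^{p+1,q+1}$ and $\Omega^{p,q+2}$ respectively. Because the decomposition of $\Omega^{p+q+2}$ is direct, each summand vanishes separately.

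\emph{Conjugation identities.} This is the main step, and it requires showing first that $*$ maps $\Omega^{p,q}$ to $\Omega^{q,p}$.
\begin{enumerate}[$(i)$]
\item In degree one, condition (iv) of the definition gives $J(\xi^*)\,=\,(J\xi)^*\,=\,(i\xi)^*\,=\,-i\,\xi^*$ for $\xi\in\Omega^{1,0}$. Here we use that $*$ is conjugate-linear. So $*$ swaps $\Omega^{1,0}$ and $\Omega^{0,1}$, and fixes $\mathcal{A}=\Omega^{0,0}$ as a set.
\item For higher degrees, $\Omega^{p,q}$ is spanned by products $a_0\,\xi_1\wedge\cdots\wedge\xi_{p+q}$, with $p$ factors in $\Omega^{1,0}$ and $q$ in $\Omega^{0,1}$. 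This holds because $\Omega^\bullet$ is generated by $\mathcal{A}$ and $\Omega^1=\Omega^{1,0}\oplus\Omega^{0,1}$, and $J$ acts on such a monomial by $(\#(1,0)-\#(0,1))\,i$.
\item The rule $(\omega\wedge\eta)^*\,=\,\pm\,\eta^*\wedge\omega^*$ reverses the order of the factors and conjugates each one, so the image of such a monomial lies in $\Omega^{q,p}$. This also shows that $*$ commutes with the projections, in the sense $\pi^{q,p}\circ*\,=\,*\circ\pi^{p,q}$.
\end{enumerate}
Now take $\xi\in\Omega^{p,q}$. Then $\xi^*\in\Omega^{q,p}$ and
\[
(\overline\partial\xi)^*\ =\ \big(\pi^{p,q+1}d\xi\big)^*\ =\ \pi^{q+1,p}\big((d\xi)^*\big)\ =\ \pi^{q+1,p}\,d(\xi^*)\ =\ \partial(\xi^*).
\]
Applying $*$ to this identity, and using that $*$ is an involution, gives $(\partial\xi)^*\,=\,\overline\partial(\xi^*)$. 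The step needing the most care is (iii), the claim that $*$ intertwines the bigrading with its transpose. If one prefers not to argue through spanning monomials, one can instead show directly that $J\circ*\,=\,*\circ J$ on all $\Omega^n$. This follows by induction using the derivation rule (i) of the definition and the reversal rule for $*$, since the sign $(-1)^{k\ell}$ is the same on both sides.
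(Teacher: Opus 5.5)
Your proof is correct. The paper gives no argument of its own here and simply quotes the result from \cite{BM20}; your bidegree bookkeeping is the standard argument behind that citation. You project the Leibniz rule and $d^2=0$ onto the summands of $\Omega^{\bullet}=\bigoplus\Omega^{p,q}$ using $d=\partial+\overline\partial$ on every $\Omega^{p,q}$, show that $*$ maps $\Omega^{p,q}$ into $\Omega^{q,p}$ (equivalently, $J\circ *=*\circ J$ in all degrees), and then project $(d\xi)^*=d(\xi^*)$.
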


\begin{dfn}
Let $\mathcal{A}$ be a noncommutative unital $\ast$-algebra over $\mathbb{C}$ equipped with a differential $\ast$-calculus $(\Omega^{\bullet},\,d)$. A \emph{complex structure} on $\mathcal{A}$ is an integrable almost complex structure.
\end{dfn}

We write $(\Omega^{\bullet,\bullet},\,\partial,\,\overline{\partial})$ for the associated complex structure on $\mathcal{A}$. The algebra of holomorphic elements in $\mathcal{A}$ is defined as
\[
\mathcal{O}(\mathcal{A})\ :=\ \mathrm{ker}\{\overline\partial\ :\ \mathcal{A}\, \longrightarrow\,\Omega^{0,1}\}.
\]
Note that using the Leibniz rule, this is indeed an algebra over $\mathbb{C}$.

\begin{dfn}[\cite{BM20}]
A differential calculus with a complex structure is said to satisfy the factorisability condition if
\[
\wedge\ :\ \Omega^{0,q}\otimes_{\mathcal{A}}\Omega^{p,0}\ \longrightarrow\ \Omega^{p,q}\,,\quad\wedge\ :\ \Omega^{p,0}\otimes_{\mathcal{A}}\Omega^{0,q}\ \longrightarrow\ \Omega^{p,q}
\]
are bimodule isomorphisms for all $p,\,q\,\ge\, 0$.
\end{dfn}

We note that this is true for all classical complex manifolds.
\medskip

Now, we recall holomorphic vector bundles in noncommutative geometry. First recall that by a \emph{smooth vector bundle} over a noncommutative algebra we mean a finitely generated projective left module over the algebra.

\begin{dfn}[{\cite{KLvS11}, \cite{BM20}}]\label{holomorphic vb}
Let $\mathcal{A}$ be a noncommutative algebra equipped with a complex structure $(\Omega^{\bullet,\bullet},\,\partial,\,\overline{\partial})$. A \emph{holomorphic vector bundle} over $\mathcal A$ consists of a finitely generated projective left $\mathcal A$-module $E$ together with a left $\overline{\partial}$-connection
\[
\overline\nabla_E\ :\ E\ \longrightarrow\ \Omega^{0,1}\otimes_{\mathcal A}E
\]
that satisfies the Leibniz rule, which states as follows:
\[
\overline\nabla_E(ae)\ =\ a\,\overline\nabla_E(e)+\overline{\partial} a\otimes e
\]
for all $a\,\in\, \mathcal A$ and $e\,\in\, E$, and the corresponding curvature vanishes, meaning
\[
\overline\nabla_E^{\,2}\ =\ 0.
\]
The pair $(E,\,\overline{\nabla}_E)$ is called a holomorphic vector bundle over $\mathcal A$.
\end{dfn}

\begin{dfn}[{\cite{BM20}}]\label{holomorphic vb morphism}
Let $(E,\,\overline\nabla_E)$ and $(\widetilde E,\,\overline\nabla_{\widetilde E})$ be holomorphic vector bundles over $\mathcal A$. A \emph{morphism} from $(E,\,\overline\nabla_E)$ to $(\widetilde E,\,\overline\nabla_{\widetilde E})$ is a left $\mathcal A$-linear map
\[
\phi\ :\ E\ \longrightarrow\ \widetilde E
\]
satisfying the condition
\[
(\mathrm{id}\otimes \phi)\circ \overline\nabla_E\ =\
\overline\nabla_{\widetilde E}\circ \phi.
\]
\end{dfn}

If $\Omega^{0,1}$ and $\Omega^{0,2}$ are flat right $\mathcal{A}$-modules, the category of holomorphic vector bundles over $\mathcal{A}$ is an abelian category \cite[Proposition 7.15]{BM20}.

\begin{dfn}\label{holomorphic subbundle}
Let $(E,\,\overline\nabla_E)$ be a holomorphic vector bundle over $\mathcal A$. A finitely generated projective left $\mathcal{A}$-submodule $F\,\subseteq\, E$ is called a holomorphic subbundle if
\[
\overline{\nabla}_E(F)\ \subseteq\ \Omega^{0,1}\otimes_{\mathcal{A}}F\,.
\]
\end{dfn}

Given a holomorphic vector bundle $(E,\,\overline\nabla_E)$ over $\mathcal{A}$, since $\overline\nabla_E$ is a flat connection, there is the following complex of vector spaces
\[
0\,\longrightarrow\, E\,\overset{\overline{\nabla}_E}{\longrightarrow}\, \Omega^{0,1}\otimes_{\mathcal{A}}E\,\overset{\overline{\nabla}_E}{\longrightarrow}\,\Omega^{0,2}\otimes_{\mathcal{A}}E\,\overset{\overline{\nabla}_E}{\longrightarrow}\,\ldots
\]
\begin{dfn}\label{hol sections}
The $j$-th cohomology group of the above complex is denoted by $H^j(E,\,\overline{\nabla}_E)$. In particular, the zero-th cohomology group $H^0(E,\,\overline{\nabla}_E)$, which is a left $\mathcal{O}(\mathcal{A})$-module, is called the space of holomorphic sections of $E$.
\end{dfn}
\bigskip

\noindent\textbf{Our framework:} In this article, we consider noncommutative algebras that are equipped with complex structures in the above sense such that
\begin{IEEEeqnarray}{lCl}\label{our dfn}
\Omega^{p,q}\ =\ 0\ \,\mbox{ when }\  p\,>\,1\ \mbox{ or }\ q\,>\,1.
\end{IEEEeqnarray}
This means that
\[
\Omega^{\bullet}\ =\ \Omega^0\,\oplus\,\big(\Omega^{1,\,0}\oplus\Omega^{0,1}\big)\,\oplus\,\Omega^{1,\,1}.
\]

\begin{dfn}\label{NC complex curve}
A noncommutative unital algebra $\mathcal{A}$ equipped with a complex structure $(\Omega^{\bullet,\bullet},\, \partial,\, \overline{\partial})$ is called a noncommutative complex curve if \Eref{our dfn} is satisfied, and moreover $\Omega^{1,0}$ is finitely generated projective as a left $\mathcal{A}$-module with
\[
\wedge\ :\ \Omega^{0,1}\otimes_{\mathcal{A}}\Omega^{1,0}\ \longrightarrow\ \Omega^{1,1}
\]
an $\mathcal{A}$-bimodule isomorphism.
\end{dfn}

In the classical case, Riemann surfaces are equipped with such differential calculus. For a genuine noncommutative example, consider the quantum projective line $\mathbb{CP}_q^1$ equipped with the left covariant $2D$-calculus obtained by restricting Woronowicz's $3D$-calculus on $SU_q(2)$. Both $\Omega^{0,1}$ and $\Omega^{1,0}$ are rank-one finitely generated projective bimodules over the coordinate algebra $\mathcal{A}(\mathbb{CP}_q^1)$, whereas, $\Omega^{1,1}$ is a free rank-one bimodule over $\mathcal{A}(\mathbb{CP}_q^1)$. We refer to \cite{KLvS11} (see also \cite{KM} and \cite{BM20}) for details. It is known that the wedge map is indeed a bimodule isomorphism \cite[Example 7.23]{BM20}.
\smallskip

We remark that the definition of a noncommutative complex curve is motivated by the classical case from the complex structure point of view, without implying any deeper structural correspondence.


\section{Holomorphic jet bundles over noncommutative complex curves}\label{Sec3}

For finitely generated projective modules over noncommutative algebras, the notion of the first jet module has been recently introduced in \cite{MSa23} (see also \cite{FMW25}). However, for our purposes, we need a very minor modification of it.

\subsection{The quantum jet modules and jet bimodules}\label{Sec3.1}

Let $\mathcal{A}$ be a noncommutative algebra equipped with a complex structure $(\Omega^{\bullet,\bullet},\,\partial,\,\overline{\partial})$. Following \cite{MSa23}, we define the first jet module of $\mathcal A$ by
\[
J_{\mathcal A}^{1,0}\ :=\ \mathcal A\oplus_{\mathbb C}\Omega^{1,0},
\qquad
j^{1,0}(x)\ :=\ x+\partial x, \quad x\,\in \,\mathcal A,
\]
where the left $\mathcal A$-action is given by
\[
a\triangleright x\ :=\ (a+\partial a)x,
\qquad a\triangleright \omega\ :=\ a\omega
\]
for all $a,\,x\,\in\, \mathcal A$ and $\omega\,\in\, \Omega^{1,0}$. Here, the direct sum is only $\mathbb C$-linear; more precisely, it is not a direct sum of left $\mathcal A$-modules.

Let $E$ be a finitely generated projective left $\mathcal A$-module, so $E$ may be thought of as a smooth vector bundle over $\mathcal{A}$. Throughout the paper, ``f.g.p.'' stands for ``finitely generated projective''. We define the following:
\begin{IEEEeqnarray}{lCl}\label{first}
J_E^{1,0}\ :=\ J_{\mathcal A}^{1,0}\otimes_{\mathcal A}E
\ =\ E\oplus_{\mathbb C}(\Omega^{1,0}\otimes_{\mathcal A}E).
\end{IEEEeqnarray}
The left $\mathcal A$-action is given by
\[
a\triangleright e\ :=\ ae+\partial a\otimes e,
\qquad
a\triangleright(\omega\otimes f)\ :=\ a\omega\otimes f,
\]
for all $a\,\in\, \mathcal A$,
$\omega\,\in\, \Omega^{1,0}$ and $e,f\,\in\, E$. Again, the decomposition in \eqref{first} is only as of a complex vector spaces.
\medskip

\noindent\textbf{Notation:} In the rest of the paper, we write $J_E^1$ in place of $J_E^{1,0}$.
\medskip

\noindent There is a natural short exact sequence of left $\mathcal A$-modules
\begin{equation}\label{ejs}
0\ \longrightarrow\ \Omega^{1,0}\otimes_{\mathcal A}E\ \longrightarrow\ J_E^1\
\longrightarrow\ E\ \longrightarrow\ 0,
\end{equation}
which we call the \emph{jet sequence} (see Remark 2.12 in \cite{FMW25} for its uniqueness).

\begin{lmma}\label{general}
Let $E$ be a f.g.p. left $\mathcal A$-module. If $\Omega^{1,0}$ is finitely generated as a left $\mathcal A$-module, then $J_E^1$ is finitely generated. Moreover, $J_E^1$ is projective as a left $\mathcal A$-module if and only if $\Omega^{1,0}\otimes_{\mathcal A}E$ is projective. In particular,
$J_E^1$ is projective as a left $\mathcal A$-module
whenever the $\mathcal{A}$-bimodule $\Omega^{1,0}$ is projective as a left $\mathcal A$-module.
\end{lmma}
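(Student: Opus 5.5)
The whole argument rests on the jet sequence \eqref{ejs}, and I will use two standard module-theoretic facts about it.

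\emph{Exactness and $\mathcal A$-linearity.} First I check that \eqref{ejs} is an exact sequence of left $\mathcal A$-modules. The inclusion $\iota\colon\omega\otimes f\mapsto \omega\otimes f$ is $\mathcal A$-linear, because $a\triangleright(\omega\otimes f)=a\omega\otimes f$. The projection $\pi\colon e+\eta\mapsto e$ is also $\mathcal A$-linear, since
\[
\pi\bigl(a\triangleright e\bigr)\;=\;\pi\bigl(ae+\partial a\otimes e\bigr)\;=\;ae .
\]
Finally, $\ker\pi$ is exactly the summand $\Omega^{1,0}\otimes_{\mathcal A}E$, which gives exactness in the middle. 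This is just bookkeeping with the $\mathbb C$-linear decomposition \eqref{first}.

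\emph{Finite generation.} If $\Omega^{1,0}$ is generated by $\omega_1,\dots,\omega_m$ and $E$ by $e_1,\dots,e_n$, then $\Omega^{1,0}\otimes_{\mathcal A}E$ is generated by the elements $\omega_i\otimes e_k$. An extension of one finitely generated module by another is finitely generated: lift generators of the quotient and add generators of the submodule. Concretely, I will take the $e_k\in J^1_E$ together with the $\omega_i\otimes e_k$. Any element $e+\eta$ can be written as $\sum_k a_k\triangleright e_k$, where $e=\sum_k a_ke_k$, plus a correction term in $\Omega^{1,0}\otimes_{\mathcal A}E$, which is itself generated by the $\omega_i\otimes e_k$.

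\emph{Projectivity.} Since $E$ is projective, \eqref{ejs} splits as a sequence of left $\mathcal A$-modules. Choosing a module section $s\colon E\to J^1_E$ of $\pi$ gives $J^1_E\cong (\Omega^{1,0}\otimes_{\mathcal A}E)\oplus E$ as left modules.

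\begin{itemize}
\item[($\Leftarrow$)] If $\Omega^{1,0}\otimes_{\mathcal A}E$ is projective, then $J^1_E$ is a direct sum of projectives, hence projective.
\item[($\Rightarrow$)] If $J^1_E$ is projective, then $\Omega^{1,0}\otimes_{\mathcal A}E$ is a direct summand of a projective module, hence projective.
\end{itemize}

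\emph{The ``in particular'' clause.} It remains to show that $\Omega^{1,0}\otimes_{\mathcal A}E$ is left projective when $\Omega^{1,0}$ is left projective and $E$ is f.g.p. Write $E\oplus E'\cong\mathcal A^n$ as left modules. Then
\[
(\Omega^{1,0}\otimes_{\mathcal A}E)\oplus(\Omega^{1,0}\otimes_{\mathcal A}E')\;\cong\;\Omega^{1,0}\otimes_{\mathcal A}\mathcal A^n\;\cong\;(\Omega^{1,0})^n
\]
as left $\mathcal A$-modules. Here the bimodule structure of $\Omega^{1,0}$ ensures that the left action is preserved under the tensor product. Since $(\Omega^{1,0})^n$ is left projective, its direct summand $\Omega^{1,0}\otimes_{\mathcal A}E$ is left projective. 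The same argument shows it is finitely generated if $\Omega^{1,0}$ is.

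The only step needing any care is this last one: one must observe that it is the projectivity of $E$, not any flatness of $\Omega^{1,0}$ as a right module, that makes the tensor product a summand of a free-like module. Everything else is formal.
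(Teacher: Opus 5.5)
Your proof is correct. The paper gives no argument of its own; it cites Lemmas 2.21 and 2.23 of FMW25, and you give the standard self-contained proof of those facts: finite generation passes to extensions, projectivity of $E$ gives a left-module splitting $J_E^1\cong E\oplus(\Omega^{1,0}\otimes_{\mathcal A}E)$, and $\Omega^{1,0}\otimes_{\mathcal A}E$ is a left direct summand of $(\Omega^{1,0})^n$. One step deserves a line of justification: the claim that the $\omega_i\otimes e_k$ generate $\Omega^{1,0}\otimes_{\mathcal A}E$ uses left finite generation of $\Omega^{1,0}$ to rewrite each $\omega_i c$ as $\sum_l d_l\omega_l$ (equivalently, $\Omega^{1,0}\otimes_{\mathcal A}E$ is a quotient of $(\Omega^{1,0})^n$), which is routine.
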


\begin{prf}
This follows from the combination of Lemma 2.21 and Lemma 2.23 of \cite{FMW25}.\qed
\end{prf}

Consequently, if $\Omega^{1,0}$ is f.g.p. as a left $\mathcal A$-module, then $J_E^1$ is again a f.g.p. left $\mathcal A$-module, in which case it is a smooth vector bundle over $\mathcal A$. In this case, the jet sequence in \eqref{ejs} splits via a left $\mathcal A$-linear map $s\,:\,E\,\longrightarrow\, J_E^1\,$. Writing
\[
s(e)\ =\ (e,\,\nabla e),
\]
one obtains a map $\nabla\,:\,E\,\longrightarrow\, \Omega^{1,0}\otimes_{\mathcal A}E$. The left $\mathcal A$-action on $J_E^1$ shows that $\nabla$ is a left $\partial$-connection on $E$.
\medskip

Now suppose that $E$ is additionally a $\mathcal A$-bimodule. To equip $J_E^1$ with a compatible right $\mathcal A$-action, we need to impose the following assumption (see \cite{MSa23}).
\medskip

\noindent\textbf{Hypothesis 1:} The $\mathcal A$-bimodule $E$ is equipped with an $\mathcal A$-bimodule map (generalized braiding)
\[
\sigma_E\ :\ E\otimes_{\mathcal A}\Omega^{1,0}\ \longrightarrow\ \Omega^{1,0}\otimes_{\mathcal A}E.
\]
\smallskip

Classically, one takes $\sigma_E$ to be the flip map which is a bimodule isomorphism. In the noncommutative setting, such a map is an extra structure. The right action on $J_E^1$ is defined by
\[
e\triangleleft a\ :=\ ea+\sigma_E(e\otimes \partial a)\,,
\qquad
(\omega\otimes e)\triangleleft a\ :=\ \omega\otimes ea,
\]
for all $e\,\in\, E$ and $\omega\,\in\, \Omega^{1,0}$. A natural question that arises here is whether the jet sequence splits as $\mathcal A$-bimodules. The condition required for this is the existence of a bimodule $\partial$-connection.

\begin{dfn}[\cite{BM20}]\label{bimod con}
Let $E$ be an $\mathcal A$-bimodule together with a bimodule map
\[
\sigma_E\ :\ E\otimes_{\mathcal A}\Omega^{1,0}\ \longrightarrow
\ \Omega^{1,0}\otimes_{\mathcal A}E\,.
\] 
A left $\sigma_E$-bimodule $\partial$-connection on $E$ is a left $\partial$-connection
\[
\nabla_E\ :\ E\ \longrightarrow\  \Omega^{1,0}\otimes_{\mathcal A}E
\]
such that
\[
\nabla_E(ea)\ =\ \nabla_E(e)a+\sigma_E(e\otimes \partial a)
\]
for all $e\,\in\, E$ and $a\,\in\, \mathcal A$.
\end{dfn}

Note that such a map $\sigma_E$ is uniquely determined by the above identity. Therefore, being a bimodule connection is a property of a left connection, and not an additional independent data (see \cite[Lemma 3.67]{BM20}). If $\sigma_E$ is invertible, then a left bimodule connection can be converted into a right bimodule connection; see \cite[Lemma 3.70]{BM20}. Note that since the bimodule map $\sigma_E$ plays the role of a ``flip'' in the commutative case, it will generally be invertible in typical noncommutative geometric settings, although this need not always be the case.
\medskip

\noindent\textbf{Hypothesis 2:}\, The $\mathcal A$-bimodule $E$ admits a $\sigma_E$-bimodule
$\partial$-connection.
\medskip

Proposition 5.1 in \cite{MSa23} gives the following.
\begin{lmma}\label{bimod splitting}
Bimodule splittings of the jet sequence are in one-to-one correspondence with bimodule $\partial$-connections $\nabla_E$, via
\[
j_E^1\ :\ E\ \longrightarrow\ J_E^1,
\qquad
j_E^1(e)\ :=\ e+\nabla_E(e).
\]
\end{lmma}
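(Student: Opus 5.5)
The plan is to verify directly that the assignment $\nabla_E\mapsto j_E^1$, with $j_E^1(e)=e+\nabla_E(e)$, sends bimodule $\partial$-connections to bimodule splittings, and that the inverse, which reads off the second component of a splitting, is well defined. Here a bimodule splitting means an $\mathcal A$-bimodule map $s\,:\,E\longrightarrow J_E^1$ with $\pi\circ s=\mathrm{id}_E$, where $\pi\,:\,J_E^1\longrightarrow E$ is the projection in \eqref{ejs}; the left action on $J_E^1$ is the one given above, and the right action is the one defined via $\sigma_E$.

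\textbf{Step 1: a bimodule splitting determines a bimodule connection.} Since $\pi\circ s=\mathrm{id}_E$ and $J_E^1=E\oplus_{\mathbb C}(\Omega^{1,0}\otimes_{\mathcal A}E)$ as vector spaces, any such $s$ has the form $s(e)=e+\nabla(e)$ for a unique $\mathbb C$-linear map $\nabla\,:\,E\longrightarrow\Omega^{1,0}\otimes_{\mathcal A}E$. We check the two module conditions separately.
\begin{itemize}
\item[(a)] Left $\mathcal A$-linearity gives $s(ae)=a\triangleright s(e)=ae+\partial a\otimes e+a\nabla(e)$. Comparing the $\Omega^{1,0}\otimes_{\mathcal A}E$ components yields $\nabla(ae)=a\nabla(e)+\partial a\otimes e$, so $\nabla$ is a left $\partial$-connection.
\item[(b)] Right $\mathcal A$-linearity gives $s(ea)=s(e)\triangleleft a=ea+\sigma_E(e\otimes\partial a)+\nabla(e)a$. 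Comparing components again yields $\nabla(ea)=\nabla(e)a+\sigma_E(e\otimes\partial a)$, which is exactly the condition of \Dref{bimod con}.
\end{itemize}

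\textbf{Step 2: the converse.} Given a $\sigma_E$-bimodule $\partial$-connection $\nabla_E$, run the same identities backwards. This shows that $j_E^1$ is left and right $\mathcal A$-linear, and clearly $\pi\circ j_E^1=\mathrm{id}_E$. The two constructions are mutually inverse by the uniqueness of the decomposition in Step 1.

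\textbf{Main obstacle.} The only delicate point is that $J_E^1$ must be a genuine bimodule: the left and right actions have to commute, and the right action has to be well defined on the tensor product. Commutation reduces to the identity
\[
(a\triangleright e)\triangleleft b=a\triangleright(e\triangleleft b),
\]
which holds because $\sigma_E$ is a bimodule map and $\partial$ satisfies the Leibniz rule. Explicitly, $\sigma_E(ae\otimes\partial b)=a\,\sigma_E(e\otimes\partial b)$, and the term $\partial a\otimes eb$ appears on both sides. Since this is the structure set up in \cite{MSa23}, I would simply cite their Proposition 5.1 for it. The rest of the argument is purely a matter of comparing components.
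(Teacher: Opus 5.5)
Your argument is correct. The paper gives no proof of its own here: it imports the lemma as Proposition 5.1 of \cite{MSa23}. You instead give a self-contained verification. Writing a section of $\pi$ as $e\mapsto e+\nabla(e)$ and comparing the $\Omega^{1,0}\otimes_{\mathcal A}E$-components of $s(ae)=a\triangleright s(e)$ and $s(ea)=s(e)\triangleleft a$ gives exactly the left Leibniz rule and the $\sigma_E$-twisted right Leibniz rule of \Dref{bimod con}, and the converse is the same computation read backwards. Your version also makes explicit what the bare citation leaves implicit: the correspondence is with bimodule connections for the \emph{fixed} $\sigma_E$ that defines the right action on $J_E^1$.

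One small correction to your ``main obstacle'' paragraph. The commutation $(a\triangleright e)\triangleleft b=a\triangleright(e\triangleleft b)$ uses only left $\mathcal A$-linearity of $\sigma_E$. The Leibniz rule for $\partial$, right linearity of $\sigma_E$ and the balancing $e\otimes a\,\partial b=ea\otimes\partial b$ are instead what give associativity of the right action, $(e\triangleleft a)\triangleleft b=e\triangleleft(ab)$. You did not check associativity, but it belongs to the standing setup that the paper also takes from \cite{MSa23}, so nothing in the lemma itself is missing.
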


Both the above hypotheses are satisfied, for example, in the case of line bundles over the quantum projective line $\mathbb{CP}_q^1\,$.

\begin{dfn}\label{de1}
Let $(E,\,\nabla_E,\,\sigma_E)$ and $(E,\,\widetilde{\nabla}_E,\,\widetilde{\sigma}_E)$ be two choices of bimodule connections on the same $\mathcal A$-bimodule $E$. We say that the corresponding quantum jet bimodules $J_E^1$ and $\widetilde{J}_E^1$ are \emph{isomorphic} if there exists an $\mathcal A$-bimodule isomorphism
\[
\Phi\ :\ J_E^1\ \longrightarrow\ \widetilde{J}_E^1
\]
satisfying the condition
\[
\Phi\circ j_E^1\ =\ \widetilde{j}_E^1.
\]
\end{dfn}

Definition \ref{de1} and Lemma 5.2 in \cite{MSa23} gives the following.
\begin{lmma}
For a given $E$, the quantum jet bimodule $J^1_E$ is independent of the particular choice of bimodule connection. More precisely, if $J_E^1$ and $\widetilde{J}_E^1$ are constructed respectively from $(\nabla_E,\,\sigma_E)$ and $(\widetilde{\nabla}_E,\widetilde{\sigma}_E)$ on $E$, then $J_E^1\cong\widetilde{J}_E^1$ as jet bimodules.
\end{lmma}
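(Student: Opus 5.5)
The plan is to write down an explicit candidate for $\Phi$ and check that it has the required properties. By the Leibniz rules for left connections, the difference $\nabla_E-\widetilde{\nabla}_E$ is left $\mathcal A$-linear. Set
\[
\theta\ :=\ \widetilde{\nabla}_E-\nabla_E\ :\ E\ \longrightarrow\ \Omega^{1,0}\otimes_{\mathcal A}E .
\]
Using the underlying $\mathbb C$-linear decompositions $J_E^1=E\oplus_{\mathbb C}(\Omega^{1,0}\otimes_{\mathcal A}E)$ and $\widetilde J_E^1=E\oplus_{\mathbb C}(\Omega^{1,0}\otimes_{\mathcal A}E)$, define
\[
\Phi(e+\eta)\ :=\ e+\theta(e)+\eta ,
\]
with $\Phi$ the identity on the $\Omega^{1,0}\otimes_{\mathcal A}E$ summand. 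Then $\Phi(j^1_E(e))=e+\nabla_E(e)+\theta(e)=e+\widetilde\nabla_E(e)=\widetilde j^1_E(e)$, so the defining condition $\Phi\circ j_E^1=\widetilde j_E^1$ of \Dref{de1} holds. Moreover $\Phi$ is bijective, with inverse $e+\eta\mapsto e-\theta(e)+\eta$.

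Next I check the left action. Since both jet modules carry the same left action $a\triangleright e=ae+\partial a\otimes e$, we get
\[
\Phi(a\triangleright e)\ =\ ae+\theta(ae)+\partial a\otimes e\ =\ ae+a\theta(e)+\partial a\otimes e\ =\ a\triangleright \Phi(e),
\]
using the left linearity of $\theta$. On the $\Omega^{1,0}\otimes_{\mathcal A}E$ part, left linearity is trivial.

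The right action is the main point. The two right actions differ, being given by $e\triangleleft a=ea+\sigma_E(e\otimes\partial a)$ on $J_E^1$ and by $ea+\widetilde\sigma_E(e\otimes\partial a)$ on $\widetilde J_E^1$. Right linearity of $\Phi$ on the $E$-part amounts to
\[
\theta(ea)+\sigma_E(e\otimes\partial a)\ =\ \theta(e)a+\widetilde\sigma_E(e\otimes\partial a),
\]
that is, $\theta(ea)-\theta(e)a=(\widetilde\sigma_E-\sigma_E)(e\otimes\partial a)$. This is exactly what one gets by subtracting the two bimodule Leibniz rules of \Dref{bimod con}: $\widetilde\nabla_E(ea)=\widetilde\nabla_E(e)a+\widetilde\sigma_E(e\otimes\partial a)$ and $\nabla_E(ea)=\nabla_E(e)a+\sigma_E(e\otimes\partial a)$. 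So the compatibility holds automatically, and this is the step I expect to need care, because $\theta$ need not be a right module map when $\sigma_E\neq\widetilde\sigma_E$. On the $\Omega^{1,0}\otimes_{\mathcal A}E$ summand both right actions are $(\omega\otimes e)\triangleleft a=\omega\otimes ea$, so right linearity there is immediate.

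Finally, since $\Phi$ is additive and bimodule-compatible on both $\mathbb C$-summands, it is an $\mathcal A$-bimodule isomorphism satisfying $\Phi\circ j_E^1=\widetilde j_E^1$. Hence $J_E^1\cong\widetilde J_E^1$ as jet bimodules in the sense of \Dref{de1}. Alternatively, the statement is Lemma 5.2 of \cite{MSa23} transported to the $\partial$-setting, and the computation above is the verification that its proof carries over.
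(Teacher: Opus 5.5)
Your proof is correct. The shear map $\Phi(e+\eta)=e+(\widetilde\nabla_E-\nabla_E)(e)+\eta$ is left linear because $\widetilde\nabla_E-\nabla_E$ is left linear, and it is right linear precisely because subtracting the two bimodule Leibniz rules yields the discrepancy $(\widetilde\sigma_E-\sigma_E)(e\otimes\partial a)$ between the two right actions; the paper gives no independent argument here but simply combines Definition~\ref{de1} with Lemma 5.2 of \cite{MSa23}, and your computation is essentially the verification behind that citation, carried out in the $\partial$-setting.
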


The following terminology shall be used throughout the article.
\smallskip

\noindent\emph{\textbf{Terminology\,:}}
When we write \emph{jet bundle} $J_E^1\,$, this means we consider only the left module structure on it. When we are considering the bimodule structure on $J_E^1\,$, we explicitly mention \emph{jet bimodule}.


\subsection{Lift of holomorphic structure to the jet bundle}\label{Sec3.2}

In this section, we construct a holomorphic structure on the first jet bundle $J_E^1$ associated to any holomorphic vector bundle $E$ over $\mathcal{A}$, where $\mathcal{A}$ is a noncommutative complex curve (\Dref{NC complex curve}). We show that the construction is functorial.

By definition, the complex structure on a noncommutative complex curve $\mathcal{A}$ is assumed to satisfy two conditions, namely $\Omega^{1,0}$ is f.g.p. as left $\mathcal{A}$-module, so that $J_E^1$ becomes a smooth vector bundle over $\mathcal{A}$ by \Lref{general}, and we have the wedge product ``$\wedge$'' induces a $\mathcal{A}$-bimodule isomorphism
\begin{eqnarray}\label{mu}
\mu\ :\ \Omega^{0,1}\otimes_{\mathcal A}\Omega^{1,0}\ \xrightarrow{\,\ \cong\,\ }\ \Omega^{1,1}\,,
\qquad
\mu(\omega\otimes \eta)\ =\ \omega\wedge \eta.
\end{eqnarray}
Given a holomorphic vector bundle $(E,\,\overline\nabla_E)$ over $\mathcal A$, recall from \Cref{Sec3.1} the associated first jet module
\[
J_E^1\ :=\ E\oplus_{\mathbb C}(\Omega^{1,0}\otimes_{\mathcal A}E)
\]
and the left $\mathcal A$-action on $J_E^1\,$:
\begin{equation}\label{eq:left-action-jet}
a\cdot (e,\,\eta)\ :=\ (ae,\,\partial a\otimes e+a\eta),
\qquad a\,\in\, \mathcal A,\, \,(e,\,\eta)\,\in\, J_E^1.
\end{equation}
We call $J^1_E$ the jet bundle. Moreover, there is associated the short exact sequence of left $\mathcal{A}$-modules
\[
0\ \longrightarrow\ \Omega^{1,0}\otimes_{\mathcal A}E \longrightarrow\ J_E^1
\ \longrightarrow\ E\ \longrightarrow\ 0
\]
called the jet sequence.

Construct the map
\begin{equation}\label{eq:DE-def}
D_E\ :=\ (\overline{\partial}\otimes \mathrm{id}_E) -(\wedge\otimes \mathrm{id}_E)
\circ (\mathrm{id}_{\Omega^{1,0}}\otimes \overline\nabla_E)\
:\ \Omega^{1,0}\otimes_{\mathcal A}E\ \longrightarrow\ \Omega^{1,1}\otimes_{\mathcal A}E.
\end{equation}
Now the map $\mu$ in \eqref{mu} produces the following map:
\begin{equation}\label{new def}
\widetilde{D}_E\ :=\ (\mu^{-1}\otimes \mathrm{id}_E)\circ D_E
\end{equation}
whose target space is $\Omega^{0,1}\otimes_{\mathcal A}\Omega^{1,0}\otimes_{\mathcal A}E$.
Next, we define
\[
\beta_E  \ :=\ (\partial\otimes \mathrm{id}_E)\circ \overline\nabla_E
\ :\ E\ \longrightarrow\ \Omega^{1,1}\otimes_{\mathcal A}E,
\]
and set
\begin{equation}\label{eq:beta-def}
\widetilde\beta_E \ :=\ (\mu^{-1}\otimes \mathrm{id}_E)\circ \beta_E\ :\
E\ \longrightarrow\ \Omega^{0,1}\otimes_{\mathcal A}\Omega^{1,0}\otimes_{\mathcal A}E.
\end{equation}
We have the diagram of maps
\[
\begin{tikzcd}[row sep=large, column sep=small] 
J_E^1  & \equiv & E \arrow[d, "\overline{\nabla}_E"] \arrow[drr, "\widetilde\beta_E"] & \oplus_{\mathbb{C}} & \Omega^{1,0}\otimes_{\mathcal{A}} E \arrow[d, "\widetilde D_E"]\\ 
\Omega^{0,1}\otimes_{\mathcal{A}} J_E^1 & \equiv & \Omega^{0,1}\otimes_{\mathcal{A}} E & \oplus_{\mathbb{C}} & \Omega^{0,1}\otimes_{\mathcal{A}}\Omega^{1,0}\otimes_{\mathcal{A}} E 
\end{tikzcd}
\]
The main result of this section is the following.

\begin{ppsn}\label{holomorphic structure}
The map
\[
\overline\nabla_J\,:\,J_E^1\,\longrightarrow\, \Omega^{0,1}\otimes_{\mathcal A}J_E^1\,,
\qquad
\overline\nabla_J(e,\,\eta)\,:=\,\bigl(\overline\nabla_E(e),\,\widetilde\beta_E(e)+\widetilde D_E(\eta)\bigr),
\]
defines a left $\overline\partial$-connection on $J_E^1\,$:
\begin{equation}\label{eq:nablaJ-def}
\overline\nabla_J\bigl(a\cdot (e,\,\eta)\bigr)\ =\
a\,\overline\nabla_J(e,\,\eta)+\overline\partial a\otimes (e,\,\eta)
\end{equation}
for all $a\,\in\, \mathcal A$ and $(e,\,\eta)\,\in\, J_E^1$. Consequently, if $(E\,,\,\overline\nabla_E)$ is a holomorphic vector bundle over a noncommutative complex curve, then so is $(J_E^1,\,\overline\nabla_J)$.
\end{ppsn}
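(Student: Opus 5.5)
The plan is to verify three things in turn: that the pieces $\widetilde D_E$, $\widetilde\beta_E$ are well defined; that the displayed formula for $\overline\nabla_J$ is a well-defined map into $\Omega^{0,1}\otimes_{\mathcal A}J_E^1$ satisfying \eqref{eq:nablaJ-def}; and that the resulting connection is flat. The final "consequently" then follows from \Lref{general}. Since $\Omega^{1,0}$ is f.g.p. as a left module, $J_E^1$ is f.g.p., so it remains only to exhibit a flat $\overline\partial$-connection on it. Flatness is automatic here: $\overline\nabla_J^{\,2}$ lands in $\Omega^{0,2}\otimes_{\mathcal A}J_E^1$, and $\Omega^{0,2}=0$ by \Eref{our dfn}.

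\emph{Well-definedness of $D_E$.} I will check $D_E(\omega a\otimes e)=D_E(\omega\otimes ae)$ for $\omega\in\Omega^{1,0}$ and $a\in\mathcal A$. By the graded Leibniz rule, $\overline\partial(\omega a)=(\overline\partial\omega)a-\omega\wedge\overline\partial a$. By the Leibniz rule for $\overline\nabla_E$, $\omega\otimes\overline\nabla_E(ae)=\omega a\otimes\overline\nabla_E e+\omega\otimes\overline\partial a\otimes e$. After applying $\wedge\otimes\mathrm{id}_E$, the two extra terms $-\omega\wedge\overline\partial a\otimes e$ coincide, so $D_E$ descends to $\Omega^{1,0}\otimes_{\mathcal A}E$, and so does $\widetilde D_E$. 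For $\beta_E=(\partial\otimes\mathrm{id}_E)\circ\overline\nabla_E$ the operator $\partial\otimes\mathrm{id}_E$ alone is not balanced. I therefore plan to treat $\beta_E$ only through the combination appearing in $\overline\nabla_J$, as explained below.

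\emph{The Leibniz rule.} Write $a\cdot(e,\eta)=(ae,\,\partial a\otimes e+a\eta)$. The first component gives $\overline\nabla_E(ae)=a\overline\nabla_E e+\overline\partial a\otimes e$. For the second component I will use three computations:
\begin{itemize}
\item $\beta_E(ae)=a\beta_E(e)+\partial a\wedge\overline\nabla_E e+\partial\overline\partial a\otimes e$;
\item $D_E(\partial a\otimes e)=\overline\partial\partial a\otimes e-\partial a\wedge\overline\nabla_E e$;
\item $D_E(a\eta)=aD_E(\eta)+\overline\partial a\wedge\eta$, using $\overline\partial(a\omega)=\overline\partial a\wedge\omega+a\overline\partial\omega$.
\end{itemize}
Summing, the mixed terms cancel by $\partial\overline\partial+\overline\partial\partial=0$. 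After applying $\mu^{-1}$, which sends $\overline\partial a\wedge\omega\otimes f$ to $\overline\partial a\otimes\omega\otimes f$, one gets
\[
a\bigl(\widetilde\beta_E(e)+\widetilde D_E(\eta)\bigr)+\overline\partial a\otimes\eta .
\]
This is exactly the second component of $a\,\overline\nabla_J(e,\eta)+\overline\partial a\otimes(e,\eta)$.

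\emph{Main obstacle: the identification in the diagram.} The expected hard step is making sense of the identification $\Omega^{0,1}\otimes_{\mathcal A}J_E^1\equiv(\Omega^{0,1}\otimes_{\mathcal A}E)\oplus_{\mathbb C}(\Omega^{0,1}\otimes_{\mathcal A}\Omega^{1,0}\otimes_{\mathcal A}E)$. Because the left action on $J_E^1$ is twisted, one has
\[
\xi b\otimes(e,0)=\xi\otimes(be,0)+\xi\otimes(0,\partial b\otimes e),
\]
so the naive componentwise map is not $\mathcal A$-balanced. I would proceed as follows.
\begin{enumerate}
\item Tensor the jet sequence with $\Omega^{0,1}$. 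This stays exact because the jet sequence splits as left modules, $E$ being projective.
\item Interpret $\overline\nabla_J(e,\eta)$ as the element $\sum\xi_i\otimes(e_i,0)+\bigl(\widetilde\beta_E(e)+\widetilde D_E(\eta)\bigr)$, where $\overline\nabla_E e=\sum\xi_i\otimes e_i$ and the second summand lies in the subobject $\Omega^{0,1}\otimes_{\mathcal A}\Omega^{1,0}\otimes_{\mathcal A}E$.
\item Check that, although $\sum\xi_i\otimes(e_i,0)$ and $\widetilde\beta_E(e)$ separately depend on the representative of $\overline\nabla_E e$, their sum does not. Changing $\xi b\otimes e'$ to $\xi\otimes be'$ alters the first term by $-\xi\otimes\partial b\otimes e'$. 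It alters $\partial\xi\otimes e'$ by $-\xi\wedge\partial b\otimes e'$ in the second (sign from $\partial(\xi b)=(\partial\xi)b-\xi\wedge\partial b$), which $\mu^{-1}$ turns into $-\xi\otimes\partial b\otimes e'$. So I must verify that the signs produce cancellation rather than doubling, if necessary fixing the sign convention of $\beta_E$ against that of $\mu$.
\end{enumerate}
With this reading, the component computation above is precisely the Leibniz rule \eqref{eq:nablaJ-def}, and the proposition follows.
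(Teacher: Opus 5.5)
Your Leibniz computation (the three identities for $\beta_E(ae)$, $D_E(\partial a\otimes e)$ and $D_E(a\eta)$, cancellation via $\partial\overline\partial+\overline\partial\partial=0$, then $\mu^{-1}$) and your flatness argument from $\Omega^{0,2}=0$ are exactly the paper's proof.

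The extra issue you raise is real, and the paper does not address it. The operator $\partial\otimes\mathrm{id}_E$ is not balanced on $\Omega^{0,1}\otimes_{\mathcal A}E$, so $\widetilde\beta_E$ alone depends on the representative chosen for $\overline\nabla_E(e)$. Likewise, the componentwise splitting of $\Omega^{0,1}\otimes_{\mathcal A}J_E^1$ is not canonical. Your repair is the right one. However, you left its decisive step open, and the sign you recorded for the second shift is backwards.

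Pass from the representative $\xi b\otimes e'$ to $\xi\otimes be'$. The $\beta$-term goes from $\partial(\xi b)\otimes e'=(\partial\xi)b\otimes e'-\xi\wedge\partial b\otimes e'$ to $(\partial\xi)b\otimes e'$. So it changes by $+\xi\wedge\partial b\otimes e'$, which $\mu^{-1}$ turns into $+\xi\otimes\partial b\otimes e'$. The first term changes by $-\xi\otimes(0,\partial b\otimes e')$, as you say. Hence the two shifts cancel, which means the map
\[
\Lambda:\Omega^{0,1}\otimes_{\mathcal A}E\longrightarrow\Omega^{0,1}\otimes_{\mathcal A}J_E^1,\qquad \xi\otimes e'\longmapsto \xi\otimes(e',0)+(\mathrm{id}\otimes i)(\mu^{-1}\otimes\mathrm{id}_E)(\partial\xi\otimes e')
\]
is well defined. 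The precise meaning of the displayed formula is then $\overline\nabla_J(e,\eta)=\Lambda(\overline\nabla_E e)+(\mathrm{id}\otimes i)\widetilde D_E(\eta)$.

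No adjustment of conventions is needed, and flipping the sign of $\beta_E$ would in fact be fatal. The term $\partial a\wedge\overline\nabla_E(e)$ in $\beta_E(ae)$ is exactly what cancels $-\partial a\wedge\overline\nabla_E(e)$ in $D_E(\partial a\otimes e)$ in your Leibniz step. With $\Lambda$ in place, your componentwise Leibniz computation goes through verbatim. Evaluate $\Lambda(\overline\nabla_E(ae))$ on the representative $\sum_i a\xi_i\otimes e_i+\overline\partial a\otimes e$ induced from $\overline\nabla_E(e)=\sum_i\xi_i\otimes e_i$.
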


\begin{proof}
Fix $a\,\in\, \mathcal A$ and $(e,\,\eta)\,\in\, J_E^1$. Recall from \eqref{eq:left-action-jet} that
\[
a\cdot (e,\,\eta)\ =\ (ae,\,\partial a\otimes e+a\eta).
\]
Hence, we have
\[
\overline\nabla_J\bigl(a\cdot (e,\,\eta)\bigr)\ =\
\Bigl(\overline\nabla_E(ae),\,\widetilde\beta_E(ae)+\widetilde D_E(\partial a\otimes e+a\eta)\Bigr).
\]
Since $\overline\nabla_E$ is a left $\overline\partial$-connection,
\[
\overline\nabla_E(ae)\ =\ a\,\overline\nabla_E(e)+\overline{\partial} a\otimes e.
\]
Therefore, to prove \eqref{eq:nablaJ-def} it suffices to show that
\begin{equation}\label{required}
\widetilde\beta_E(ae)+\widetilde D_E(\partial a\otimes e+a\eta)\
=\ \overline{\partial} a\otimes \eta+a\,\widetilde\beta_E(e)+a\,\widetilde D_E(\eta).
\end{equation}
Applying $\mu\otimes \mathrm{id}_E$, this is equivalent to
\begin{equation}\label{required1}
\beta_E(ae)+D_E(\partial a\otimes e+a\eta)\ =\
(\mu\otimes \mathrm{id}_E)(\overline{\partial} a\otimes \eta)+a\,\beta_E(e)+a\,D_E(\eta).
\end{equation}

We will now verify \eqref{required1}. First, using the Leibniz rule for $\overline\nabla_E$, we obtain
the following:
\begin{align}
\beta_E(ae)\
&=\ (\partial\otimes \mathrm{id}_E)\bigl(a\,\overline\nabla_E(e)+\overline{\partial} a\otimes e\bigr) \notag\\
&=\ a\,\beta_E(e)+\partial a\wedge \overline\nabla_E(e)+\partial\overline{\partial} a\otimes e.
\label{eq:beta-expand}
\end{align}
Secondly, by the definition of $D_E$, we have
\begin{align}
D_E(\partial a\otimes e)
&\ =\ (\overline{\partial}\partial a)\otimes e-(\wedge\otimes \mathrm{id}_E)(\partial a\otimes \overline\nabla_E(e)) \notag\\
&=\ \overline{\partial}\partial a\otimes e-\partial a\wedge \overline\nabla_E(e).
\label{eq:D-expand}
\end{align}
Thirdly, again from \eqref{eq:DE-def}, we have the following:
\begin{align}
D_E(a\eta)\
&=\ (\overline{\partial}\otimes \mathrm{id}_E)(a\eta)
-(\wedge\otimes \mathrm{id}_E)(\mathrm{id}_{\Omega^{1,0}}\otimes \overline\nabla_E)(a\eta) \notag\\
&=\ \overline{\partial} a\wedge \eta+a\,D_E(\eta).
\label{eq:D-aeta}
\end{align}
Since $\mu$ is an isomorphism of left modules, it follows that $\widetilde{D}_E$ is a $\overline{\partial}$-connection on $\Omega^{1,0}\otimes_{\mathcal{A}} E$. Combining \eqref{eq:beta-expand}, \eqref{eq:D-expand} and \eqref{eq:D-aeta}, and using
\[
\partial\overline{\partial}+\overline{\partial}\partial\ =\ 0,
\]
we finally obtain the following:
\[
\beta_E(ae)+D_E(\partial a\otimes e+a\eta)\ =\
a\,\beta_E(e)+a\,D_E(\eta)+\overline{\partial} a\wedge \eta\,.
\]

Since $\overline{\partial} a\wedge \eta\,=\,(\mu\otimes \mathrm{id}_E)(\overline{\partial} a\otimes \eta)$, this is exactly \eqref{required1}. Hence \eqref{required} holds, and therefore, $\overline\nabla_J$ satisfies the left Leibniz rule.

Finally, for the case of a noncommutative complex curve $\mathcal{A}$, by definition we have $\Omega^{0,2}(\mathcal{A})\,=\,0$. Therefore, every left $\overline\partial$-connection on any f.g.p. left $\mathcal{A}$-module is automatically flat. Consequently, $\overline{\nabla}_J$ induces a holomorphic structure on $J_E^1$.
\end{proof}

We have the following short exact sequence
\begin{equation}\label{hol jet}
0\ \longrightarrow\ \Omega^{1,0}\otimes_{\mathcal A}E\
\overset{i}{\longrightarrow}\ J_E^1\
\overset{\pi}{\longrightarrow}\ E\
\longrightarrow\ 0
\end{equation}
of f.g.p. left \(\mathcal A\)-modules, where we equip \(J_E^1\) with the holomorphic structure constructed in \Pref{holomorphic structure}.

\begin{crlre}\label{short exact hol cat}
The sequence \eqref{hol jet} is short exact in the category of holomorphic vector bundles over $\mathcal{A}$.
\end{crlre}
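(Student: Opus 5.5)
We need to show that $i$ and $\pi$ are morphisms of holomorphic vector bundles in the sense of \Dref{holomorphic vb morphism}. Here $\Omega^{1,0}\otimes_{\mathcal A}E$ carries the $\overline\partial$-connection $\widetilde D_E$, which was shown in the proof of \Pref{holomorphic structure} to be a left $\overline\partial$-connection. It is automatically flat since $\Omega^{0,2}=0$, and it is f.g.p. by \Lref{general} and the standing hypotheses. Exactness as left $\mathcal A$-modules is already known from \eqref{ejs}. So the whole content is the two intertwining identities
\[
(\mathrm{id}_{\Omega^{0,1}}\otimes i)\circ\widetilde D_E\ =\ \overline\nabla_J\circ i,
\qquad
(\mathrm{id}_{\Omega^{0,1}}\otimes \pi)\circ\overline\nabla_J\ =\ \overline\nabla_E\circ\pi .
\]

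First I fix the identification $\Omega^{0,1}\otimes_{\mathcal A}J_E^1\equiv(\Omega^{0,1}\otimes_{\mathcal A}E)\oplus_{\mathbb C}(\Omega^{0,1}\otimes_{\mathcal A}\Omega^{1,0}\otimes_{\mathcal A}E)$ used implicitly in the diagram before \Pref{holomorphic structure}. This needs care, because $J_E^1$ is not a direct sum of left modules. Tensoring the jet sequence with $\Omega^{0,1}$ on the left gives an exact sequence, exact on the left because it splits as left modules by projectivity of $E$. The maps $\mathrm{id}\otimes i$ and $\mathrm{id}\otimes\pi$ are then the inclusion into the second component and the projection to the first component under the identification used to define $\overline\nabla_J$.

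I would make this precise by writing $(e,\eta)=(e,0)+(0,\eta)$ and noting that $(0,\eta)=i(\eta)$ and $\pi(e,\eta)=e$. Then $\mathrm{id}\otimes\pi$ applied to $\overline\nabla_J(e,\eta)=\bigl(\overline\nabla_E(e),\,\widetilde\beta_E(e)+\widetilde D_E(\eta)\bigr)$ returns $\overline\nabla_E(e)=\overline\nabla_E(\pi(e,\eta))$. Likewise, $\overline\nabla_J(i(\eta))=\overline\nabla_J(0,\eta)=\bigl(0,\widetilde D_E(\eta)\bigr)$, since $\overline\nabla_E(0)=0$ and $\widetilde\beta_E(0)=0$, and this equals $(\mathrm{id}\otimes i)(\widetilde D_E(\eta))$.

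The main obstacle is the first step: justifying that the $\mathbb C$-linear decomposition of $\Omega^{0,1}\otimes_{\mathcal A}J_E^1$ used in defining $\overline\nabla_J$ really has $\mathrm{id}\otimes i$ as the second-component inclusion and $\mathrm{id}\otimes\pi$ as the first-component projection. The latter is the issue because the first summand is not a submodule. I would handle it by checking that $\mathrm{id}\otimes\pi$ and $\mathrm{id}\otimes i$ are well defined on balanced tensors. Their compatibility with the twisted action $a\cdot(e,0)=(ae,\partial a\otimes e)$ only introduces terms in the $i$-image, which $\mathrm{id}\otimes\pi$ kills. This confirms the identification, and the two computations above then finish the proof.
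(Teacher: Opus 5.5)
Your proposal is correct and follows the paper's proof: both reduce the statement to flatness of $\widetilde D_E$ (automatic since $\Omega^{0,2}=0$) and to the two identities $\overline\nabla_J\circ i=(\mathrm{id}\otimes i)\circ\widetilde D_E$ and $(\mathrm{id}\otimes\pi)\circ\overline\nabla_J=\overline\nabla_E\circ\pi$, read off directly from the formula for $\overline\nabla_J$. Your extra care about the $\mathbb{C}$-linear decomposition of $\Omega^{0,1}\otimes_{\mathcal A}J_E^1$, which the paper uses silently, is harmless but overstated: the first-summand lift $\omega\otimes e\mapsto\omega\otimes(e,0)$ is only determined modulo $\mathrm{im}(\mathrm{id}\otimes i)$, so there is no canonical identification to ``confirm''; the proof needs only that $\mathrm{id}\otimes\pi$ kills this ambiguity (as you note), while for $\overline\nabla_J\circ i$ one has $e=0$ and no lift is involved.
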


\begin{proof}
We only need to verify compatibility with the holomorphic structures. By \Pref{holomorphic structure}, we have
\[
\overline{\nabla}_J(e,\,\eta)\ =\
\bigl(\overline{\nabla}_E(e),\,\widetilde{\beta}_E(e)+\widetilde{D}_E(\eta)\bigr).
\]
Therefore, the restriction of \(\overline{\nabla}_J\) to the submodule \(\Omega^{1,0}\otimes_{\mathcal A}E\,\subseteq \,J_E^1\) is exactly
\[
\widetilde{D}_E\ :\ \Omega^{1,0}\otimes_{\mathcal A}E\ \longrightarrow\
\Omega^{0,1}\otimes_{\mathcal A}\left(\Omega^{1,0}\otimes_{\mathcal A}E\right).
\]
Since \(\Omega^{0,2}(\mathcal{A})\,=\,0\) by definition, this induced \((0,\,1)\)-connection is automatically flat. Hence \(\Omega^{1,0}\otimes_{\mathcal A}E\) itself is a holomorphic vector bundle. Finally, it follows immediately that both maps in \eqref{hol jet} are holomorphic, because
\[
\overline{\nabla}_J\circ i=(\mathrm{id}\otimes i)\circ \widetilde{D}_E\,,
\qquad
(\mathrm{id}\otimes \pi)\circ \overline{\nabla}_J=\overline{\nabla}_E\circ \pi.
\]
This completes the proof.
\end{proof}

\begin{crlre}
The space of holomorphic sections of the jet bundle $J_E^1$ is given by $H^0(J_E^1\,,\overline{\nabla}_J)=H^0(E,\,\overline{\nabla}_E)\oplus_{\mathbb{C}}H^0(\Omega^{1,0}\otimes_{\mathcal{A}}E,\,\widetilde{D}_E)$.
\end{crlre}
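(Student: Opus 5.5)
The plan is to compute the kernel of $\overline{\nabla}_J$ directly from the explicit formula in \Pref{holomorphic structure}. The point is that, although $J_E^1=E\oplus_{\mathbb C}(\Omega^{1,0}\otimes_{\mathcal A}E)$ is not a direct sum of left $\mathcal A$-modules, $H^0$ is only a $\mathbb C$-vector space. So a $\mathbb C$-linear decomposition of the kernel is all that is being claimed, and the non-split left action plays no role.

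First, by \Dref{hol sections}, $H^0(J_E^1,\,\overline\nabla_J)=\ker\overline\nabla_J$. By the formula
\[
\overline\nabla_J(e,\,\eta)\ =\ \bigl(\overline\nabla_E(e),\,\widetilde\beta_E(e)+\widetilde D_E(\eta)\bigr),
\]
and since the target $\Omega^{0,1}\otimes_{\mathcal A}J_E^1$ decomposes $\mathbb C$-linearly as in the diagram preceding \Pref{holomorphic structure}, a pair $(e,\,\eta)$ lies in the kernel if and only if both components vanish. That is,
\[
\overline\nabla_E(e)\ =\ 0\qquad\text{and}\qquad \widetilde\beta_E(e)+\widetilde D_E(\eta)\ =\ 0 .
\]

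Second, I will show that the coupling term disappears on holomorphic sections. By definition, $\widetilde\beta_E=(\mu^{-1}\otimes\mathrm{id}_E)\circ(\partial\otimes\mathrm{id}_E)\circ\overline\nabla_E$ is a composite of $\mathbb C$-linear maps whose first factor is $\overline\nabla_E$. Hence $\overline\nabla_E(e)=0$ forces $\widetilde\beta_E(e)=0$. The second condition then reduces to $\widetilde D_E(\eta)=0$, i.e. $\eta\in H^0(\Omega^{1,0}\otimes_{\mathcal A}E,\,\widetilde D_E)$. This uses that $\widetilde D_E$ is the flat $\overline\partial$-connection on the subbundle, as established in \Cref{short exact hol cat}.

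Third, for the converse inclusion, any $e\in H^0(E,\,\overline\nabla_E)$ and $\eta\in H^0(\Omega^{1,0}\otimes_{\mathcal A}E,\,\widetilde D_E)$ give $\overline\nabla_J(e,\,\eta)=(0,\,0+0)=0$ by the same computation. Hence $\ker\overline\nabla_J=H^0(E,\,\overline\nabla_E)\oplus_{\mathbb C}H^0(\Omega^{1,0}\otimes_{\mathcal A}E,\,\widetilde D_E)$ as complex vector spaces.

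The only delicate point is the second step: one must confirm that $\widetilde\beta_E$ genuinely factors through $\overline\nabla_E$ as a well-defined map, so that it vanishes on $\ker\overline\nabla_E$. This is how it is used in \eqref{eq:beta-expand}, so I would simply appeal to that definition. I would also remark that the equality is one of $\mathbb C$-vector spaces, not of $\mathcal O(\mathcal A)$-modules, since the $\mathcal A$-action mixes the two summands through $\partial a\otimes e$.
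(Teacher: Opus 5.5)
Your route is exactly the paper's: its proof just cites the formula of \Pref{holomorphic structure} and reads off the kernel. But the point you flag as delicate is a genuine gap, which the paper's one-line proof also passes over. It cannot be settled by appealing to the definition of $\widetilde\beta_E$, because the map $\partial\otimes\mathrm{id}_E$ is not well defined on $\Omega^{0,1}\otimes_{\mathcal{A}}E$. For $\omega\in\Omega^{0,1}$, $b\in\mathcal{A}$, $e\in E$ one has $\partial(\omega b)=(\partial\omega)b-\omega\wedge\partial b$, hence
\[
(\partial\otimes\mathrm{id}_E)(\omega b\otimes e)-(\partial\otimes\mathrm{id}_E)(\omega\otimes be)\ =\ -\,\omega\wedge\partial b\otimes e .
\]
Such elements span $\Omega^{1,1}\otimes_{\mathcal{A}}E$, so this is nonzero except in degenerate cases. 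Consequently $\widetilde\beta_E(e)$ depends on the chosen expression $\overline\nabla_E(e)=\sum_i\omega_i\otimes e_i$, and ``$\overline\nabla_E(e)=0$ forces $\widetilde\beta_E(e)=0$'' is not meaningful: the expression $\omega b\otimes e-\omega\otimes be$ of $0$ yields $-\omega\otimes\partial b\otimes e$. Your first step has the same defect. Since the left action on $J^1_E$ is twisted, $\omega b\otimes(e,0)=\omega\otimes(be,\,\partial b\otimes e)$ in $\Omega^{0,1}\otimes_{\mathcal{A}}J^1_E$. So $\omega\otimes(e,\eta)\mapsto(\omega\otimes e,\,\omega\otimes\eta)$ is not well defined, and ``both components vanish'' has no meaning yet.

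The two defects cancel, and that is how to repair the argument. Let $\iota=\mathrm{id}_{\Omega^{0,1}}\otimes i$ be the inclusion of $\Omega^{0,1}\otimes_{\mathcal{A}}\Omega^{1,0}\otimes_{\mathcal{A}}E$ into $\Omega^{0,1}\otimes_{\mathcal{A}}J^1_E$. Using the two identities above (and $\mu^{-1}(\omega\wedge\partial b)=\omega\otimes\partial b$), one checks that
\[
L(\omega\otimes e)\ :=\ \omega\otimes(e,0)+\iota\bigl((\mu^{-1}\otimes\mathrm{id}_E)(\partial\omega\otimes e)\bigr)
\]
is balanced. So $L\colon\Omega^{0,1}\otimes_{\mathcal{A}}E\to\Omega^{0,1}\otimes_{\mathcal{A}}J^1_E$ is a well-defined $\mathbb{C}$-linear map with $(\mathrm{id}\otimes\pi)\circ L=\mathrm{id}$. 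The formula of \Pref{holomorphic structure} is meaningful precisely as $\overline\nabla_J(e,\eta)=L(\overline\nabla_E(e))+\iota(\widetilde D_E(\eta))$.

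Now suppose $\overline\nabla_J(e,\eta)=0$. Applying $\mathrm{id}\otimes\pi$ gives $\overline\nabla_E(e)=0$. Then $L(\overline\nabla_E(e))=L(0)=0$, so $\iota(\widetilde D_E(\eta))=0$. The map $\iota$ is injective, because the jet sequence splits as left modules ($E$ is projective) and $\Omega^{0,1}\otimes_{\mathcal{A}}-$ preserves split exact sequences. Hence $\widetilde D_E(\eta)=0$, and the reverse inclusion is the same computation.

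Equivalently, in the $\mathbb{C}$-linear coordinates $(x,y)\mapsto L(x)+\iota(y)$ on $\Omega^{0,1}\otimes_{\mathcal{A}}J^1_E$, $\overline\nabla_J$ is just $\overline\nabla_E\oplus\widetilde D_E$. The coupling term you set out to kill is an artifact of the naive splitting. This does not split the jet sequence holomorphically, since $e\mapsto(e,0)$ is not left $\mathcal{A}$-linear. Your closing remark that the equality is only one of $\mathbb{C}$-vector spaces is fine.
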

\begin{prf}
Follows from \Dref{hol sections} and \Pref{holomorphic structure}.\qed
\end{prf}

\begin{ppsn}[{Functoriality}]\label{functor}
The assignment $(E,\,\overline{\nabla}_E)\,\longmapsto\,(J_E^1,\,\overline{\nabla}_J)$ is an endofunctor on the category of holomorphic vector bundles over a noncommutative complex curve $\mathcal{A}$. Moreover, if $\phi\,:\,(E,\,\overline{\nabla}_E)\, \longrightarrow\, (F,\,\overline{\nabla}_F)$ is a morphism of holomorphic vector bundles over $\mathcal{A}$, then there exists a commutative diagram of short exact sequences in the holomorphic category
\[
\begin{tikzcd}[row sep=small, column sep=tiny] 
0 &\longrightarrow& \Omega^{1,0}\otimes_{\mathcal{A}}E \arrow[d, "\,\mathrm{id}\otimes\phi"]  & \longrightarrow & J^1_E \arrow[d, "\,J^1\phi"]  & \longrightarrow & E \arrow[d, "\,\phi"] &\longrightarrow& 0\\ 
0 &\longrightarrow& \Omega^{1,0}\otimes_{\mathcal{A}}F & \longrightarrow & J^1_F & \longrightarrow & F &\longrightarrow& 0
\end{tikzcd}
\]
where all the vertical arrows are holomorphic.
\end{ppsn}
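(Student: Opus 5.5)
We define $J^1\phi\,:\,J_E^1\,\longrightarrow\, J_F^1$ on the $\mathbb{C}$-linear decomposition of \eqref{first} by
\[
J^1\phi(e,\,\eta)\ :=\ \bigl(\phi(e),\,(\mathrm{id}_{\Omega^{1,0}}\otimes\phi)(\eta)\bigr).
\]
The proof then has three parts: (i) $J^1\phi$ is left $\mathcal A$-linear for the twisted action \eqref{eq:left-action-jet}; (ii) $J^1\phi$ intertwines $\overline\nabla_J$ on $J_E^1$ and on $J_F^1$; (iii) the construction respects identities and composition, and the diagram commutes. Parts (iii) and the commutativity of the squares are immediate from the formula. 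The left square is $J^1\phi\circ i=i\circ(\mathrm{id}\otimes\phi)$ and the right square is $\pi\circ J^1\phi=\phi\circ\pi$. The identity $J^1(\psi\circ\phi)=J^1\psi\circ J^1\phi$ holds componentwise, and $J^1(\mathrm{id}_E)=\mathrm{id}_{J_E^1}$.

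For (i), we compute
\[
J^1\phi\bigl(a\cdot(e,\eta)\bigr)\ =\ \bigl(\phi(ae),\,\partial a\otimes\phi(e)+a(\mathrm{id}\otimes\phi)(\eta)\bigr)\ =\ a\cdot J^1\phi(e,\eta).
\]
This uses only the left $\mathcal A$-linearity of $\phi$ and the fact that $\mathrm{id}\otimes\phi$ is well defined on $\Omega^{1,0}\otimes_{\mathcal A}E$.

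The main step is (ii). By \Pref{holomorphic structure} it amounts to three identities. The first is $(\mathrm{id}\otimes\phi)\circ\overline\nabla_E=\overline\nabla_F\circ\phi$, which is the hypothesis that $\phi$ is a morphism. The second and third are
\[
(\mathrm{id}\otimes\mathrm{id}\otimes\phi)\circ\widetilde\beta_E=\widetilde\beta_F\circ\phi,\qquad (\mathrm{id}\otimes\mathrm{id}\otimes\phi)\circ\widetilde D_E=\widetilde D_F\circ(\mathrm{id}\otimes\phi).
\]
Since $\mu^{-1}\otimes\mathrm{id}$ is natural in the last tensor factor, it suffices to verify these for $\beta$ and $D$, with target $\Omega^{1,1}\otimes_{\mathcal A}(\cdot)$.

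For $\beta$, we write $\beta_F\circ\phi=(\partial\otimes\mathrm{id})\circ\overline\nabla_F\circ\phi=(\partial\otimes\mathrm{id})\circ(\mathrm{id}\otimes\phi)\circ\overline\nabla_E$. Now $\partial\otimes\mathrm{id}$ commutes with $\mathrm{id}\otimes\phi$, since both act on separate factors. The only subtlety is that $\partial\otimes\mathrm{id}_E$ by itself is not well defined on $\Omega^{0,1}\otimes_{\mathcal A}E$. However, the paper already treats $\beta_E$ as a well-defined map, so we use the same representative-level argument: on a simple tensor $\omega\otimes e$ both sides give $\partial\omega\otimes\phi(e)$.

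For $D$, the term $(\overline\partial\otimes\mathrm{id})$ commutes with $\mathrm{id}\otimes\phi$ in the same way. For the wedge term, we compute
\[
(\wedge\otimes\mathrm{id})(\mathrm{id}\otimes\overline\nabla_F)(\omega\otimes\phi(e))\ =\ (\wedge\otimes\mathrm{id})\bigl(\omega\otimes(\mathrm{id}\otimes\phi)\overline\nabla_E(e)\bigr),
\]
again by the morphism property. This equals $(\mathrm{id}\otimes\phi)$ applied to $(\wedge\otimes\mathrm{id})(\omega\otimes\overline\nabla_E(e))$.

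The expected obstacle is bookkeeping of well-definedness over $\otimes_{\mathcal A}$. I would handle it by working with the combinations $\beta$ and $D$ as already shown well defined in the paper, and noting that every map involved is natural in the $E$-slot. Flatness of $\overline\nabla_J$ is automatic because $\Omega^{0,2}=0$, so the functor lands in holomorphic vector bundles. Finally, \Cref{short exact hol cat} shows that both rows of the diagram are short exact in the holomorphic category, and the vertical maps are holomorphic by (ii) together with the morphism property of $\phi$ and of $\mathrm{id}\otimes\phi$, the latter being the $D$-identity.
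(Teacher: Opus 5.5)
Your proposal is correct and follows the paper's proof essentially step for step. It uses the same componentwise definition of $J^1\phi$, the same reduction of holomorphicity to the intertwining identities for $\widetilde\beta$ and $\widetilde D$ (which the paper only asserts and you actually verify), and the same identity/composition check. Your caveat about $\partial\otimes\mathrm{id}_E$ is well founded: $\beta_E$ alone is not well defined on $\Omega^{0,1}\otimes_{\mathcal A}E$, since the representatives $\alpha a\otimes e$ and $\alpha\otimes ae$ give values differing by $\alpha\wedge\partial a\otimes e$, and the naive componentwise splitting of $\Omega^{0,1}\otimes_{\mathcal A}J_E^1$ is not well defined either. These two ambiguities cancel in $\overline\nabla_J(e,0)$, so the combined element is representative-independent, and that is what legitimizes your simple-tensor computation.
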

\begin{prf}
Consider a noncommutative complex curve $\mathcal{A}$. Recall that a morphism $\phi:(E,\overline{\nabla}_E)\longrightarrow(F,\overline{\nabla}_F)$ of holomorphic vector bundles over $\mathcal{A}$ is by definition a left $\mathcal{A}$-linear map from $E$ to $F$ such that $(\mathrm{id}\otimes\phi)\circ\overline{\nabla}_E=\overline{\nabla}_F\circ\phi$. Given such a map $\phi$, define
\[
J^1\phi:J_E^1\longrightarrow J_F^1\,,\quad (e,\eta)\longmapsto\big(\phi(e),\,(\mathrm{id}\otimes\phi)(\eta)\big)\,.
\]
It is easy to check that $J^1\phi$ is a left $\mathcal{A}$-linear map. Moreover, we get the following identities
\begin{IEEEeqnarray}{lCl}\label{identities}
& & (\mathrm{id}\otimes\mathrm{id}\otimes\phi)\widetilde\beta_E(e)= \widetilde\beta_F(\phi(e))\nonumber\\
& & (\mathrm{id}\otimes\mathrm{id}\otimes\phi)\widetilde{D}_E(\eta)= \widetilde{D}_F\big((\mathrm{id}\otimes\phi)(\eta)\big)
\end{IEEEeqnarray}
for all $e\in E$ and $\eta\in\Omega^{1,0}\otimes_{\mathcal{A}}E$. Let $\overline{\nabla}_J^{(E)}$ and $\overline{\nabla}_J^{(F)}$ denote the holomorphic structures obtained in \Pref{holomorphic structure} for the jet modules $J_E^1$ and $J_F^1$ respectively. Using the identities in \Eref{identities}, we obtain
\begin{IEEEeqnarray*}{lCl}
\overline{\nabla}_J^{(F)}(J^1\phi(e,\eta)) &=& \big(\overline{\nabla}_F(\phi(e)),\,\widetilde{\beta}_F(\phi(e))+\widetilde{D}_F((\mathrm{id}\otimes\phi)(\eta))\big)\\
&=& \big((\mathrm{id}\otimes\phi)\overline{\nabla}_E(e),\,(\mathrm{id}\otimes\mathrm{id}\otimes\phi)\widetilde{\beta}_E(e)+(\mathrm{id}\otimes\mathrm{id}\otimes\phi)\widetilde{D}_E(\eta)\big)\\
&=& (\mathrm{id}_{\Omega^{0,1}}\otimes J^1\phi)\overline{\nabla}_J^{(E)}(e,\eta)
\end{IEEEeqnarray*}
which proves that $J^1\phi$ is a holomorphic map. Finally, it is easy to observe that $J^1(\mathrm{id}_E)=\mathrm{id}_{J_E^1}$, and $J^1(\psi\circ\phi)=(J^1\psi)\circ (J^1\phi)$ for morphisms $\phi:(E,\overline{\nabla}_E)\longrightarrow(F,\overline{\nabla}_F)$ and $\psi:(F,\overline{\nabla}_F)\longrightarrow(G,\overline{\nabla}_G)$ of holomorphic vector bundles over $\mathcal{A}$. Thus the assignment $(E,\overline{\nabla}_E)\longmapsto(J_E^1\,,\overline{\nabla}_J)$ is an endofunctor on the category of holomorphic vector bundles over $\mathcal{A}$.

The final statement of diagram commutativity is easy to verify.\qed
\end{prf}

Note that the connection $\overline{\nabla}_J$ constructed in \Pref{holomorphic structure} does not involve any auxiliary choice beyond the given holomorphic structure $\overline{\nabla}_E$ on $E$ and the module structure on $J_E^1\,$. It is determined functorially from the data already present. If
\[
T\ :\ J_E^1\ \longrightarrow\ \Omega^{0,1}\otimes_{\mathcal A}J_E^1
\]
is any left $\mathcal A$-linear map, then
\[
\overline{\nabla}_J^{\,\prime}\ :=\ \overline{\nabla}_J+T
\]
is again a left $\overline\partial$-connection on $J_E^1$. Since $\Omega^{0,2}\,=\,0$ by the assumption on $\mathcal{A}$, every such left $\overline\partial$-connection $\overline{\nabla}_J^{\,\prime}$ on $J_E^1$ is automatically flat. Therefore, $J_E^1$ has many holomorphic structures. What distinguishes the connection $\overline{\nabla}_J$ from these other connections is its compatibility with the jet sequence in the holomorphic category that we shall see in the next section.


\section{Holomorphic connection and splitting of the jet sequence}\label{Sec4}

Let \((E,\,\overline{\nabla}_E)\) be a holomorphic vector bundle over a noncommutative complex curve $\mathcal{A}$. By Corollary \ref{short exact hol cat}, we have the following short exact sequence of holomorphic vector bundles
\[
0\longrightarrow\big(\Omega^{1,0}\otimes_{\mathcal A}E\,,\,\widetilde{D}_E\big)
\overset{i}{\longrightarrow} \left(J_E^1\,,\,\overline{\nabla}_J\right)
\overset{\pi}{\longrightarrow} \left(E\,,\,\overline{\nabla}_E\right)
\longrightarrow 0\,.
\]

\begin{dfn}
The extension class $At(E)\in \mathrm{Ext}_{\mathrm{hol}}^1(E,\,\Omega^{1,0}\otimes_{\mathcal{A}}E)$ associated to the holomorphic jet sequence is called the Atiyah class.
\end{dfn}

A natural question that arises is when $At(E)$ vanishes, which is equivalent to splitting of the holomorphic jet sequence in the holomorphic category. In the classical case, $At(E)$ vanishes precisely when $E$ admits a holomorphic connection. In this section, we shall take this to the noncommutative setting of the noncommutative complex curves.


\subsection{Holomorphic connection and holomorphic splitting}\label{Sec4.1}

Recall that given a differential calculus $(\Omega^\bullet,d)$ on $\mathcal{A}$, if $\nabla\,:\,E\,\longrightarrow\,\Omega^1\otimes_{\mathcal{A}}E$ is a (left) $d$-connection on a f.g.p. left $\mathcal{A}$-module $E$, the curvature is given by $\Theta_\nabla\,:=\,\widetilde{\nabla}\circ\nabla$, where
\[
\widetilde{\nabla}\ :\ \Omega^1\otimes_{\mathcal{A}}E\ \longrightarrow\ \Omega^2\otimes_{\mathcal{A}}E
\]
is defined by
\begin{IEEEeqnarray}{lCl}\label{curvature}
\omega\otimes e\ \longmapsto\ d\omega\otimes e-\omega\wedge\nabla e
\end{IEEEeqnarray}
for all $\omega\,\in\,\Omega^1$ and $e\,\in\, E$ (see \cite{Con94}, or Section 3.2 in \cite{BM20}). It is easy to verify that $\Theta_\nabla\in\mathrm{Hom}_{\mathcal{A}}(E,\,\Omega^2\otimes_{\mathcal{A}} E)$.

\begin{dfn}\label{hol con}
A holomorphic connection on a holomorphic vector bundle \((E,\,\overline{\nabla}_E)\) over a noncommutative complex curve $\mathcal{A}$ is a left \(\partial\)-connection
\[
\nabla^{1,0}\ :\ E\ \longrightarrow\ \Omega^{1,0}\otimes_{\mathcal A}E
\]
such that the total \(d\,:=\,\partial+\overline\partial\)-connection
\[
\nabla_{\mathrm{tot}}\ :=\ \nabla^{1,0}+\overline{\nabla}_E\
:\ E\ \longrightarrow\ \Omega^1\otimes_{\mathcal A}E
\]
has zero curvature.
\end{dfn}

\begin{lmma}\label{curvature expression}
Given a holomorphic vector bundle \((E,\,\overline{\nabla}_E)\) over a noncommutative complex curve $\mathcal{A}$ and a $\partial$-connection $\nabla^{1,0}\,:\,E\,\longrightarrow\, \Omega^{1,0}\otimes_{\mathcal A}E$, the curvature of the total connection $\nabla_{\mathrm{tot}}=\nabla^{1,0}+\overline{\nabla}_E$ is given by the following:
\[
\Theta_{\nabla_{\mathrm{tot}}}
=
(\partial\otimes\mathrm{id}_E)\overline{\nabla}_E
+(\overline\partial\otimes\mathrm{id}_E)\nabla^{1,0}
-(\wedge\otimes\mathrm{id}_E)(\mathrm{id}_{\Omega^{1,0}}\otimes\overline{\nabla}_E)\nabla^{1,0}
-(\mu\otimes\mathrm{id}_E)(\mathrm{id}_{\Omega^{0,1}}\otimes\nabla^{1,0})\overline{\nabla}_E
\]
where $\mu$ is as in \eqref{mu}.
\end{lmma}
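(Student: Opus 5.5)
The plan is to expand the curvature formula \eqref{curvature} for $\nabla=\nabla_{\mathrm{tot}}$ directly and then sort the resulting terms by bidegree, using that $\mathcal{A}$ is a noncommutative complex curve. Concretely, $\Theta_{\nabla_{\mathrm{tot}}}=\widetilde{\nabla}_{\mathrm{tot}}\circ\nabla_{\mathrm{tot}}$, where $\widetilde{\nabla}_{\mathrm{tot}}(\omega\otimes e)=d\omega\otimes e-\omega\wedge\nabla_{\mathrm{tot}}e$ for $\omega\in\Omega^1$. Since $\Omega^1=\Omega^{1,0}\oplus\Omega^{0,1}$, I will write
\[
\nabla_{\mathrm{tot}}(e)=\nabla^{1,0}(e)+\overline{\nabla}_E(e)\in\big(\Omega^{1,0}\otimes_{\mathcal A}E\big)\oplus\big(\Omega^{0,1}\otimes_{\mathcal A}E\big).
\]
I will then apply $\widetilde{\nabla}_{\mathrm{tot}}$ separately to each summand, using $\mathcal{A}$-bilinearity of $\wedge$ so that $\widetilde{\nabla}_{\mathrm{tot}}$ is well defined on $\Omega^1\otimes_{\mathcal A}E$; this is the standard fact from \cite{BM20}.

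For the $\Omega^{1,0}$ part, I will use integrability ($d=\partial+\overline\partial$ on $\Omega^{1,0}$) together with $\Omega^{2,0}=0$. This gives $d\omega=\overline\partial\omega$ for $\omega\in\Omega^{1,0}$. Moreover, $\omega\wedge\nabla^{1,0}e\in\Omega^{2,0}\otimes E=0$, so only $\omega\wedge\overline{\nabla}_E e$ survives. This yields the second and third terms:
\[
(\overline\partial\otimes\mathrm{id}_E)\nabla^{1,0}-(\wedge\otimes\mathrm{id}_E)(\mathrm{id}_{\Omega^{1,0}}\otimes\overline{\nabla}_E)\nabla^{1,0}.
\]
For the $\Omega^{0,1}$ part, $\Omega^{0,2}=0$ gives $d\omega=\partial\omega$ for $\omega\in\Omega^{0,1}$, which produces $(\partial\otimes\mathrm{id}_E)\overline{\nabla}_E$. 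Likewise $\omega\wedge\overline{\nabla}_E e\in\Omega^{0,2}\otimes E=0$, so the remaining wedge term is $\omega\wedge\nabla^{1,0}e$. In Sweedler-type notation this is exactly $(\mu\otimes\mathrm{id}_E)(\mathrm{id}_{\Omega^{0,1}}\otimes\nabla^{1,0})\overline{\nabla}_E$ with a minus sign, by the definition of $\mu$ in \eqref{mu}.

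The step I expect to need the most care is the well-definedness and bookkeeping of the terms written as compositions like $(\mathrm{id}\otimes\nabla^{1,0})\overline{\nabla}_E$, since $\nabla^{1,0}$ is not $\mathcal A$-linear and so $\mathrm{id}\otimes\nabla^{1,0}$ alone is not defined on $\Omega^{0,1}\otimes_{\mathcal A}E$. I would handle this by treating the combinations
\[
\omega\otimes e\mapsto\partial\omega\otimes e-\omega\wedge\nabla^{1,0}e
\]
(respectively $\eta\otimes e\mapsto\overline\partial\eta\otimes e-\eta\wedge\overline{\nabla}_E e$ for $\eta\in\Omega^{1,0}$) as single well-defined maps. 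Balancedness follows from the Leibniz rules: for instance $\partial(\omega a)\otimes e-\omega\wedge\nabla^{1,0}(ae)$ equals $\partial\omega\otimes ae-\omega\wedge\partial a\otimes e-\omega a\wedge\nabla^{1,0}e+\omega\wedge\partial a\otimes e$, and the two middle terms cancel. The formula in the lemma is then read as the sum of these two well-defined operators, each composed with the corresponding component of $\nabla_{\mathrm{tot}}$. The only other point to verify is that all terms land in $\Omega^{1,1}\otimes_{\mathcal A}E$, which is immediate from the bidegrees.
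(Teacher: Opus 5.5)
Your proposal is correct and matches the paper's proof: both expand $\widetilde{\nabla}_{\mathrm{tot}}\circ\nabla_{\mathrm{tot}}$ separately on $\Omega^{1,0}\otimes_{\mathcal A}E$ and $\Omega^{0,1}\otimes_{\mathcal A}E$, use $d=\overline\partial$ on $\Omega^{1,0}$ and $d=\partial$ on $\Omega^{0,1}$, and drop the wedge terms that land in $\Omega^{2,0}$ and $\Omega^{0,2}$. Your remark on well-definedness, grouping $\partial\otimes\mathrm{id}_E$ with $-(\mu\otimes\mathrm{id}_E)(\mathrm{id}\otimes\nabla^{1,0})$ (which is just the restriction of $\widetilde{\nabla}_{\mathrm{tot}}$ to $\Omega^{0,1}\otimes_{\mathcal A}E$), is a point the paper glosses over; note, however, that your displayed balancedness identity is garbled (mixed left-hand side and a sign slip), and the clean check is that the values on $\omega a\otimes e$ and on $\omega\otimes ae$ both equal $\partial\omega\otimes ae-\omega\wedge\partial a\otimes e-\omega a\wedge\nabla^{1,0}e$.
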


\begin{proof}
Consider the connection $\nabla_{\mathrm{tot}}\,:=\,\nabla^{1,0}+\overline{\nabla}_E\,:\,E\,\longrightarrow\,\Omega^1\otimes_{\mathcal{A}}E$, where $\Omega^1\,=\,\Omega^{1,0}\oplus\Omega^{0,1}$. Its curvature is $\widetilde{\nabla}_{\mathrm{tot}}\circ\nabla_{\mathrm{tot}}$, where
\[
\widetilde{\nabla}_{\mathrm{tot}}\ :\ \Omega^1\otimes_{\mathcal{A}}E\ \longrightarrow\ \Omega^2\otimes_{\mathcal{A}}E\ =\ \Omega^{1,1}\otimes_{\mathcal{A}}E
\]
is given by
\[
\omega\otimes e\ \longmapsto\ d\omega\otimes e-\omega\wedge\nabla_{\mathrm{tot}}\,(e)
\]
for all $\omega\,\in\, \Omega^1$ and $e\,\in\, E$ (see \eqref{curvature}). Since $d\,=\,\partial+\overline\partial$, we conclude that the map
\[
\widetilde{\nabla}_{\mathrm{tot}}\ :\ \Omega^{1,0}\otimes_{\mathcal{A}}E\oplus\Omega^{0,1}\otimes_{\mathcal{A}}E\ \longrightarrow\ \Omega^{1,1}\otimes_{\mathcal{A}}E
\]
is given by
\[
(\omega_1\otimes e_1,\,\omega_2\otimes e_2)\ \longmapsto\ \overline\partial\omega_1\otimes e_1+\partial\omega_2\otimes e_2-\omega_1\wedge\nabla_{\mathrm{tot}}\,(e_1)-\omega_2\wedge\nabla_{\mathrm{tot}}\,(e_2).
\]
Now writing $\nabla^{1,0}(e)\,=\,\omega_1\otimes e_1$ and $\overline{\nabla}_E\,(e)\,=\,\omega_2\otimes e_2$ with $\omega_1\,\in\,\Omega^{1,0}$ and $\omega_2\,\in\,\Omega^{0,1}$ (we can safely ignore the summation involved here in our computation below), we get that
\begin{IEEEeqnarray}{lCl}\label{used here}
\widetilde{\nabla}_{\mathrm{tot}}\circ\nabla_{\mathrm{tot}}(e) &=& \widetilde{\nabla}_{\mathrm{tot}}(\omega_1\otimes e_1\,,\,\omega_2\otimes e_2)\nonumber\\
&=& \overline\partial\omega_1\otimes e_1+\partial\omega_2\otimes e_2-\omega_1\wedge\nabla_{\mathrm{tot}}\,(e_1)-\omega_2\wedge\nabla_{\mathrm{tot}}\,(e_2)\nonumber\\
&=& \overline\partial\omega_1\otimes e_1+\partial\omega_2\otimes e_2-\omega_1\wedge\overline{\nabla}_E\,(e_1)-\omega_2\wedge\nabla^{1,0}\,(e_2)
\end{IEEEeqnarray}
because $\omega_1\wedge\nabla^{1,0}\,e_1\,=\,\omega_2\wedge\overline{\nabla}_E\,e_2\,=\,0$ as $\Omega^{2,0}\,=\,\Omega^{0,2}\,=\,\{0\}$. Now, observe that
\begin{align*}
\omega_1\wedge\overline{\nabla}_E\,(e_1) &= (\wedge\otimes\mathrm{id}_E)(\mathrm{id}_{\Omega^{1,0}}\otimes\overline{\nabla}_E)\circ\nabla^{1,0}(e),\\
\omega_2\wedge\nabla^{1,0}\,(e_2) &= (\mu\otimes\mathrm{id}_E)(\mathrm{id}_{\Omega^{0,1}}\otimes\nabla^{1,0})\circ\overline{\nabla}_E(e).
\end{align*}
Hence, from \eqref{used here} we get that
\begin{IEEEeqnarray}{lCl}\label{curvature expression-eq}
& & \widetilde{\nabla}_{\mathrm{tot}}\circ\nabla_{\mathrm{tot}}(e)\\
&=& (\partial\otimes\mathrm{id}_E)\overline{\nabla}_E(e)+(\overline\partial\otimes\mathrm{id}_E)\nabla^{1,0}(e)-(\wedge\otimes\mathrm{id}_E)(\mathrm{id}_{\Omega^{1,0}}\otimes\overline{\nabla}_E)\circ\nabla^{1,0}(e)\nonumber\\
& & -(\mu\otimes\mathrm{id}_E)(\mathrm{id}_{\Omega^{0,1}}\otimes\nabla^{1,0})\circ\overline{\nabla}_E(e).\nonumber
\end{IEEEeqnarray}
for all $e\,\in\, E$. This completes the proof.
\end{proof}

\begin{thm}\label{thm:splitimpliesconnectionbothway}
Let \((E,\,\overline{\nabla}_E)\) be a holomorphic vector bundle over a noncommutative complex curve. Then \(E\) admits a holomorphic connection if and only if the holomorphic jet sequence in \eqref{hol jet} splits holomorphically.
\end{thm}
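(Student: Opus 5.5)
The plan is to identify holomorphic splittings of \eqref{hol jet} with left $\partial$-connections $\nabla^{1,0}$ satisfying a single equation, and then to recognise that equation as the vanishing of the curvature computed in \Lref{curvature expression}.

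\textbf{Step 1: left splittings are $\partial$-connections.} As noted after \Lref{general}, any left $\mathcal A$-linear section $s$ of $\pi$ has the form $s(e)=(e,\nabla^{1,0}e)$. The left action \eqref{eq:left-action-jet} shows that $s$ is $\mathcal A$-linear exactly when
\[
\nabla^{1,0}(ae)=\partial a\otimes e+a\nabla^{1,0}(e).
\]
So left splittings correspond bijectively to left $\partial$-connections on $E$.

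\textbf{Step 2: the holomorphicity condition.} The section $s$ is a morphism in the holomorphic category iff $\overline\nabla_J\circ s=(\mathrm{id}\otimes s)\circ\overline\nabla_E$. By \Pref{holomorphic structure}, the left side equals
\[
\bigl(\overline\nabla_E e,\ \widetilde\beta_E(e)+\widetilde D_E(\nabla^{1,0}e)\bigr).
\]
For the right side, write $\overline\nabla_E e=\omega_2\otimes e_2$. Then $(\mathrm{id}\otimes s)$ gives $\omega_2\otimes(e_2,\nabla^{1,0}e_2)$. Under the $\mathbb C$-linear identification $\Omega^{0,1}\otimes_{\mathcal A}J_E^1\equiv \Omega^{0,1}\otimes_{\mathcal A}E\oplus_{\mathbb C}\Omega^{0,1}\otimes_{\mathcal A}\Omega^{1,0}\otimes_{\mathcal A}E$, this becomes
\[
\bigl(\overline\nabla_E e,\ (\mathrm{id}_{\Omega^{0,1}}\otimes\nabla^{1,0})\overline\nabla_E e\bigr).
\]

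\textbf{Main obstacle: justifying this identification.} The point is that tensoring by $\Omega^{0,1}$ must be compatible with the twisted left action on $J^1_E$. I would handle it through the exact sequence \eqref{hol jet}, which is already used in \Pref{holomorphic structure} to write $\overline\nabla_J$ componentwise. One subtlety is that $\omega_2\otimes(e_2,0)$ need not equal $(\omega_2\otimes e_2,0)$ under the $\mathbb C$-decomposition, because the action involves $\partial a$. I would resolve this by making the diagram in Section~\ref{Sec3.2} precise and checking that the decomposition there is the one implicitly used. Alternatively, I would work directly with $s$ composed with the projection and the difference map. Specifically, $\overline\nabla_J s-(\mathrm{id}\otimes s)\overline\nabla_E$ is $\mathcal A$-linear, and its image under $\mathrm{id}\otimes\pi$ vanishes because $\pi$ is holomorphic (\Cref{short exact hol cat}). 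Hence it lands in $\Omega^{0,1}\otimes_{\mathcal A}\Omega^{1,0}\otimes_{\mathcal A}E$ (using flatness/projectivity of the modules involved), and it can be computed there.

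\textbf{Step 3: match with the curvature.} Granting Step 2, $s$ is holomorphic iff
\[
\widetilde\beta_E+\widetilde D_E\circ\nabla^{1,0}-(\mathrm{id}\otimes\nabla^{1,0})\overline\nabla_E=0.
\]
Apply the isomorphism $\mu\otimes\mathrm{id}_E$ and unfold the definitions \eqref{eq:DE-def} and \eqref{eq:beta-def}. This gives exactly
\[
(\partial\otimes\mathrm{id})\overline\nabla_E+(\overline\partial\otimes\mathrm{id})\nabla^{1,0}-(\wedge\otimes\mathrm{id})(\mathrm{id}\otimes\overline\nabla_E)\nabla^{1,0}-(\mu\otimes\mathrm{id})(\mathrm{id}\otimes\nabla^{1,0})\overline\nabla_E=0,
\]
which by \Lref{curvature expression} is $\Theta_{\nabla_{\mathrm{tot}}}=0$. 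Hence holomorphic splittings correspond bijectively to holomorphic connections, and this proves both directions of \Tref{thm:splitimpliesconnectionbothway}.
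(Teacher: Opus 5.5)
Your proposal is essentially the paper's proof. In both, left splittings $s(e)=(e,\nabla^{1,0}e)$ are exactly left $\partial$-connections, the first component of $\overline\nabla_J\circ s-(\mathrm{id}\otimes s)\circ\overline\nabla_E$ vanishes, and applying $\mu\otimes\mathrm{id}_E$ to the second component reproduces the curvature formula of Lemma~\ref{curvature expression}, which gives both directions at once.

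The identification issue you flag is real, and the paper's proof passes over it silently, but it is harmless. Fix one representative $\overline\nabla_E(e)=\sum_k\omega_k\otimes e_k$ and use it in both places. In $\overline\nabla_J(s(e))$ it gives the lift $\sum_k\omega_k\otimes(e_k,0)$ together with $\widetilde\beta_E(e)=\sum_k\mu^{-1}(\partial\omega_k)\otimes e_k$. In $(\mathrm{id}\otimes s)\overline\nabla_E(e)$ it gives $\sum_k\omega_k\otimes(e_k,0)+\sum_k\omega_k\otimes(0,\nabla^{1,0}e_k)$. The lifts $\sum_k\omega_k\otimes(e_k,0)$ cancel in the difference. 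What remains is
$\sum_k\mu^{-1}(\partial\omega_k)\otimes e_k-\sum_k\omega_k\otimes\nabla^{1,0}e_k+\widetilde D_E(\nabla^{1,0}e)$.
This is independent of the representative: under $\omega a\otimes e\leftrightarrow\omega\otimes ae$ the first two terms change by $\pm\,\omega\otimes\partial a\otimes e$, and these cancel. So the defect equals $(\mathrm{id}\otimes i)\bigl((\mu^{-1}\otimes\mathrm{id}_E)\Theta_{\nabla_{\mathrm{tot}}}(e)\bigr)$. Finally, $\mathrm{id}\otimes i$ is injective because the jet sequence splits as left modules.
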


\begin{proof}
Let $\mathcal{A}$ be a noncommutative complex curve and \((E,\,\overline{\nabla}_E)\) be a holomorphic vector bundle over it. Assume first that \(\nabla^{1,0}\) is a holomorphic connection on \(E\). Define
\[
s_{\nabla}\ :\ E\ \longrightarrow\ J_E^1,
\qquad
s_{\nabla}(e)\ :=\ \bigl(e,\,\nabla^{1,0}e\bigr).
\]
Since \(\nabla^{1,0}\) is a left \(\partial\)-connection, the defining left action on \(J_E^1\) gives
\[
s_{\nabla}(ae)
=
\bigl(ae,\nabla^{1,0}(ae)\bigr)
=
\bigl(ae,\,a\,\nabla^{1,0}(e)+\partial a\otimes e\bigr)
=
a\cdot s_{\nabla}(e)
\]
for all $a\in\mathcal{A}$, so that \(s_{\nabla}\) is left \(\mathcal A\)-linear and satisfies
the condition \(\pi\circ s_{\nabla}\,=\,\mathrm{id}_E\).

To verify that \(s_{\nabla}\) is holomorphic, consider the defect
\[
\delta_{\nabla}\ :=\ \overline{\nabla}_J\circ s_{\nabla}
- (\mathrm{id}\otimes s_{\nabla})\circ \overline{\nabla}_E
\ :\ E\ \longrightarrow \Omega^{0,1}\otimes_{\mathcal A}J_E^1.
\]
Using \Pref{holomorphic structure}, we find that
\[
\overline{\nabla}_J\bigl(s_{\nabla}(e)\bigr)
=
\Bigl(\overline{\nabla}_E(e),\,\widetilde{\beta}_E(e)+\widetilde{D}_E(\nabla^{1,0}e)\Bigr),
\]
while
\[
(\mathrm{id}\otimes s_{\nabla})\overline{\nabla}_E(e)
=
\Bigl(\overline{\nabla}_E(e),\,(\mathrm{id}\otimes\nabla^{1,0})\overline{\nabla}_E(e)\Bigr).
\]
Thus, the first component of \(\delta_{\nabla}\) vanishes, and its second component is
\[
\widetilde{\beta}_E(e)
+\widetilde{D}_E(\nabla^{1,0}e)
-(\mathrm{id}\otimes\nabla^{1,0})\overline{\nabla}_E(e).
\]
Applying \(\mu\otimes\mathrm{id}_E\) and using the definitions of \(\widetilde{\beta}_E\) and \(\widetilde{D}_E\) from \Pref{holomorphic structure}, we precisely recover the right-hand side of \eqref{curvature expression-eq}. Since
\[
\mu\ :\ \Omega^{0,1}\otimes_{\mathcal A}\Omega^{1,0}\ \longrightarrow\ \Omega^{1,1}
\]
is an isomorphism, we conclude the following:
$$\delta_{\nabla}\ =\ 0$$ if and only if the curvature satisfies $$\Theta_{\nabla_{\mathrm{tot}}}\ =\ 0.$$
As \(\nabla^{1,0}\) is a holomorphic connection, by definition
\(\Theta_{\nabla_{\mathrm{tot}}}=0\). Hence \(\delta_{\nabla}=0\), so \(s_{\nabla}\) is a holomorphic splitting of the jet sequence in \eqref{hol jet}.
\smallskip

Conversely, suppose that \(s\,:\,E\,\longrightarrow\, J_E^1\) is a holomorphic splitting of \eqref{hol jet}.
Since \(\pi\circ s\,=\,\mathrm{id}_E\) and also \(J_E^1\,=\,E\oplus_{\mathbb C}(\Omega^{1,0}\otimes_{\mathcal A}E)\) as a vector space, there is a unique \(\mathbb C\)-linear map
\[
\nabla^{1,0}\ :\ E\ \longrightarrow\ \Omega^{1,0}\otimes_{\mathcal A}E
\]
such that
\[
s(e)\ =\ \bigl(e,\,\nabla^{1,0}e\bigr).
\]
Since \(s\) is left \(\mathcal A\)-linear, the left action on \(J_E^1\) implies that
\[
\nabla^{1,0}(ae)\ =\ a\,\nabla^{1,0}(e)+\partial a\otimes e,
\]
for all $a\in\mathcal{A}$ and $e\in E$. So \(\nabla^{1,0}\) is a left \(\partial\)-connection on $E$. Moreover, \(s\) is holomorphic, and hence
\[
\overline{\nabla}_J\circ s\ =\ (\mathrm{id}\otimes s)\circ\overline{\nabla}_E.
\]
Repeating the above computation with \(s\,=\,s_{\nabla}\), we obtain that \(\delta_{\nabla}\,=\,0\), and therefore we have \(\Theta_{\nabla_{\mathrm{tot}}}\,=\,0\) by \eqref{curvature expression-eq}. Thus, \(\nabla^{1,0}\) is a holomorphic connection over \(E\).
\end{proof}

A holomorphic direct sum decomposition of a holomorphic vector bundle $(E,\overline{\nabla}_E)$ over $\mathcal{A}$ is a decomposition $E\,=\,\bigoplus_{{j=1}}^nE_j$ of f.g.p. left $\mathcal{A}$-submodules such that each $E_j$ is a holomorphic subbundle (see Definition \ref{holomorphic subbundle})\,:
\[
\overline{\nabla}_E(E_j)\ \subseteq\ \Omega^{0,1}\otimes_{\mathcal{A}}E_j
\]
for all $j\,=\,1,\,\cdots,\,n$.

\begin{ppsn}[Direct sum]
Suppose that a holomorphic vector bundle $E$ over a noncommutative complex curve decomposes into holomorphic subbundles $E\,=\,\bigoplus_{{j=1}}^n\,E_j$. Then, $E$ admits a holomorphic connection if and only if each $E_j$ admits holomorphic connection.
\end{ppsn}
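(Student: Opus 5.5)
The plan is to work directly with \Dref{hol con} and the curvature formula of \Lref{curvature expression}, using the inclusions $\iota_j\colon E_j\to E$ and projections $p_j\colon E\to E_j$ attached to the decomposition. These are left $\mathcal A$-linear maps with $p_j\iota_k=\delta_{jk}\,\mathrm{id}$ and $\sum_j\iota_jp_j=\mathrm{id}_E$. Since each $E_j$ is a holomorphic subbundle, $\overline{\nabla}_E$ restricts to a $\overline\partial$-connection $\overline{\nabla}_{E_j}$ on $E_j$, and $\overline{\nabla}_E=\sum_j(\mathrm{id}\otimes\iota_j)\overline{\nabla}_{E_j}p_j$. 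Thus $\iota_j$ and $p_j$ are morphisms of holomorphic vector bundles in the sense of \Dref{holomorphic vb morphism}. Here we use that $\Omega^{0,1}\otimes_{\mathcal A}E=\bigoplus_j\Omega^{0,1}\otimes_{\mathcal A}E_j$, because tensor products commute with finite direct sums.

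For the direction ``each $E_j$ has a holomorphic connection $\Rightarrow$ $E$ has one'', I would set
\[
\nabla^{1,0}\ :=\ \sum_j(\mathrm{id}\otimes\iota_j)\circ\nabla_j^{1,0}\circ p_j .
\]
The Leibniz rule for $\nabla^{1,0}$ follows from $\sum_j\iota_jp_j=\mathrm{id}$, so it is a $\partial$-connection. I would then insert it into the formula of \Lref{curvature expression}. Each of the four terms is compatible with the direct sum, since $\partial\otimes\mathrm{id}$, $\overline\partial\otimes\mathrm{id}$, $\wedge\otimes\mathrm{id}$ and $\mu\otimes\mathrm{id}$ all commute with $\mathrm{id}\otimes\iota_j$. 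Both $\nabla^{1,0}$ and $\overline{\nabla}_E$ preserve the summands. Hence $\Theta_{\nabla_{\mathrm{tot}}}=\sum_j(\mathrm{id}\otimes\iota_j)\Theta_{j}p_j$, which is zero.

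For the converse, given a holomorphic connection $\nabla^{1,0}$ on $E$, I would define the compressed connection $\nabla_j^{1,0}:=(\mathrm{id}\otimes p_j)\circ\nabla^{1,0}\circ\iota_j$. Because $p_j$ and $\iota_j$ are $\mathcal A$-linear with $p_j\iota_j=\mathrm{id}$, this is a $\partial$-connection on $E_j$. The key step is to show that its total curvature vanishes. Here the off-diagonal blocks $(\mathrm{id}\otimes p_k)\nabla^{1,0}\iota_j$ with $k\neq j$ are a priori nonzero, so curvature does not simply compress. Rather than compute with them, I would pass to the jet picture and use \Tref{thm:splitimpliesconnectionbothway} together with the functoriality in \Pref{functor}. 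Let $s$ be the holomorphic splitting of the jet sequence of $E$, and set $s_j:=J^1(p_j)\circ s\circ\iota_j\colon E_j\to J^1_{E_j}$. This map is holomorphic as a composite of holomorphic maps. By the commutative diagram in \Pref{functor}, $\pi_{E_j}\circ s_j=p_j\circ\pi_E\circ s\circ\iota_j=\mathrm{id}_{E_j}$. So $s_j$ is a holomorphic splitting, and \Tref{thm:splitimpliesconnectionbothway} gives a holomorphic connection on $E_j$; it is precisely the compressed connection above. The forward direction can be run the same way, using $s:=\sum_jJ^1(\iota_j)\,s_j\,p_j$. One checks $\pi_E\circ s=\sum_j\iota_jp_j=\mathrm{id}_E$, which avoids the curvature computation altogether.

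The main obstacle I anticipate is exactly this converse: why the mixed terms do not spoil flatness. The functoriality argument sidesteps it, but only if $J^1(p_j)$ and $J^1(\iota_j)$ are honest holomorphic morphisms. That in turn requires $p_j$ to be holomorphic, i.e.\ $\overline{\nabla}_E(E_k)\subseteq\Omega^{0,1}\otimes E_k$ for every $k$. This is where the hypothesis that all summands are holomorphic subbundles is used. I would check this compatibility carefully; the remaining verifications are routine.
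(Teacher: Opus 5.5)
Your proof is correct, but for the converse it takes a different route from the paper. The paper argues directly. For one direction it takes $\bigoplus_j\nabla_j$. For the converse it takes exactly your compressed connection $\nabla_j=(\mathrm{id}\otimes p_j)\circ\nabla^{1,0}\circ\iota_j$ and simply asserts that its total curvature vanishes.

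The obstacle you anticipated does not occur on a noncommutative complex curve. Because $\Omega^{2,0}=0$, the curvature formula of \Lref{curvature expression} has no term quadratic in $\nabla^{1,0}$. Each of its four terms commutes with compression, since $\overline{\nabla}_E\iota_j=(\mathrm{id}\otimes\iota_j)\overline{\nabla}_{E_j}$ and $(\mathrm{id}\otimes p_j)\overline{\nabla}_E=\overline{\nabla}_{E_j}p_j$. Hence $(\mathrm{id}\otimes p_j)\,\Theta_{\nabla_{\mathrm{tot}}}\,\iota_j=\Theta_{\nabla_j+\overline{\nabla}_{E_j}}$, and the off-diagonal blocks $A_{kj}=(\mathrm{id}\otimes p_k)\nabla^{1,0}\iota_j$ ($k\neq j$) drop out on their own. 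Classically, the only place they could enter is the $(j,j)$ block $\sum_{k\neq j}A_{jk}\wedge A_{kj}$ of the quadratic term, which is a $(2,0)$-form and therefore vanishes on a curve.

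Your argument instead moves the problem to the jet sequence, using \Tref{thm:splitimpliesconnectionbothway} and \Pref{functor}. Holomorphic splittability then passes to holomorphic retracts and to finite direct sums as a formal consequence of the naturality of the jet sequence. Your check that $J^1(p_j)\circ s\circ\iota_j$ is $e\mapsto(e,\nabla_j e)$ confirms that it produces the same connection.

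The paper's route is more elementary: only \Dref{hol con} and the curvature formula are needed, and it shows explicitly where $\Omega^{2,0}=0$ enters. Yours avoids all curvature bookkeeping and is more structural. However, it leans on the heavier splitting theorem and the functoriality result, and the role of $\Omega^{2,0}=0$ is hidden inside them.
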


\begin{proof}
If each $E_j$ admits a holomorphic connection $\nabla_j$, then $\bigoplus_{j=1}^n \nabla_j$ is
a holomorphic connection on $E\,=\,\bigoplus_{{j=1}}^n\,E_j$. To prove the converse, let
$\nabla$ be a holomorphic connection on $E\,=\,\bigoplus_{{j=1}}^n\,E_j$. For
each $1\ \leq\, i\, \leq\, n$, denote by $\nabla_i$ the
following composition
$$
E_i \, \hookrightarrow\, E\,=\,\bigoplus_{{j=1}}^n\,E_j \, \stackrel{\nabla}{\longrightarrow}\,
\Omega^{1,0}\otimes_{\mathcal A} E \, \longrightarrow\, \Omega^{1,0}\otimes_{\mathcal A} E_i,
$$
where the projection $\Omega^{1,0}\otimes_{\mathcal A} E \, \longrightarrow\,
\Omega^{1,0}\otimes_{\mathcal A} E_i$ is the tensor product of the identity map of $\Omega^{1,0}$
and the natural projection $\bigoplus_{{j=1}}^n\,E_j \,\longrightarrow\, E_i$. It is straightforward
to check that $\nabla_i$ is a holomorphic connection on $E_i$.
\end{proof}

\noindent\textbf{An instance of Holomorphic connection}: For the standard holomorphic structure on the line bundle \(\mathcal L_n\) over the quantum projective line $\mathbb{CP}_q^1\,$, the canonical \((1,0)\)-connection
\[
\nabla^{\partial}(\xi):=q^{-n-2}\,\omega_+\bigl(X_+\triangleright \xi\bigr)
\]
constructed in \cite[Eq.~(3.30)]{KLvS11} has the curvature
\[
\Theta_{\nabla^{\partial}+\overline{\nabla}^{(n)}_0}
=
-q^{-n+1}[n]_q\,\omega_-\wedge\omega_+,
\]
see \cite[Eq.~(3.29)]{KLvS11}. Hence, this canonical $\partial$-connection on $\mathcal{L}_n$ is a holomorphic connection if and only if \(n=0\).


\subsection{The bimodule case}\label{Sec4.2}

Assume that \(E\) is additionally a \(\mathcal A\)-bimodule and the given holomorphic structure
\[
\overline{\nabla}_E:E\longrightarrow \Omega^{0,1}\otimes_{\mathcal A}E
\]
on $E$ is a bimodule \(\overline{\sigma}_E\)-connection, where $\overline{\sigma}_E:E\otimes_{\mathcal A}\Omega^{0,1}\to\Omega^{0,1}\otimes_{\mathcal A}E$ is a $\mathcal{A}$-bimodule map.

\begin{lmma}
If a holomorphic vector bundle $(E,\,\overline{\nabla}_E)$ over $\mathcal{A}$ admits a holomorphic connection which is further a bimodule connection, then the jet sequence splits both in the holomorphic as well as in the bimodule category.
\end{lmma}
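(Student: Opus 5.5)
The plan is to use the single map $s_\nabla(e):=(e,\,\nabla^{1,0}e)$ from the proof of \Tref{thm:splitimpliesconnectionbothway} and check that it is a splitting in both categories at once. Let $\nabla^{1,0}$ be a holomorphic connection on $(E,\,\overline{\nabla}_E)$ which is moreover a left $\sigma_E$-bimodule $\partial$-connection in the sense of \Dref{bimod con}. Here $\sigma_E\,:\,E\otimes_{\mathcal A}\Omega^{1,0}\to\Omega^{1,0}\otimes_{\mathcal A}E$ is a bimodule map, so Hypotheses 1 and 2 hold. Consequently $J_E^1$ carries the right action $e\triangleleft a=ea+\sigma_E(e\otimes\partial a)$, $(\omega\otimes f)\triangleleft a=\omega\otimes fa$, and is a jet bimodule.

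First, the holomorphic splitting. By the first half of the proof of \Tref{thm:splitimpliesconnectionbothway}, $s_\nabla$ is left $\mathcal A$-linear and satisfies $\pi\circ s_\nabla=\mathrm{id}_E$. There the defect $\delta_\nabla$ was identified, after applying $\mu\otimes\mathrm{id}_E$, with $\Theta_{\nabla_{\mathrm{tot}}}$ via \Lref{curvature expression}. Since this curvature vanishes, $s_\nabla$ is a morphism of holomorphic vector bundles and splits \eqref{hol jet} holomorphically. Nothing new is needed for this step.

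Second, the bimodule splitting. By \Lref{bimod splitting}, bimodule $\partial$-connections correspond to bimodule splittings via $j_E^1(e)=e+\nabla_E(e)$. With $\nabla_E=\nabla^{1,0}$ this $j_E^1$ is exactly $s_\nabla$. For completeness I will verify right linearity directly:
\[
s_\nabla(ea)=\bigl(ea,\,\nabla^{1,0}(e)a+\sigma_E(e\otimes\partial a)\bigr),
\]
while
\[
s_\nabla(e)\triangleleft a=\bigl(ea,\,\sigma_E(e\otimes\partial a)+\nabla^{1,0}(e)a\bigr).
\]
These agree. The inclusion $i$ and projection $\pi$ are clearly bimodule maps for this right action, so the jet sequence splits in the bimodule category.

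The only point needing care is that one map serves for both structures. Both verifications are applied to the same $s_\nabla$, so this is automatic. I also need to note that the holomorphic structure $\overline{\nabla}_J$ plays no role in the bimodule check, so the bimodule $\overline{\sigma}_E$-property of $\overline{\nabla}_E$ is not needed for this lemma. The main obstacle, if any, is only bookkeeping: the right action on $\Omega^{1,0}\otimes_{\mathcal A}E$ must match the one used in \Lref{bimod splitting}.
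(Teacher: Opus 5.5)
Your proposal is correct and follows the paper's route: the paper's proof just combines \Tref{thm:splitimpliesconnectionbothway} with \Lref{bimod splitting}. Your direct check that the same map $s_\nabla(e)=(e,\,\nabla^{1,0}e)$ is both holomorphic and right $\mathcal A$-linear for the $\sigma_E$-twisted action makes explicit what the paper leaves implicit, namely that one splitting works in both categories at once.
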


\begin{proof}
This follows from the combination of \Tref{thm:splitimpliesconnectionbothway} and \Lref{bimod splitting}.
\end{proof}

\begin{lmma}
Let $(E,\,\overline{\nabla}_E)$ be a holomorphic vector bundle over $\mathcal{A}$ which admits a holomorphic connection and \(\sigma_E\,:\,E\otimes_{\mathcal A}\Omega^{1,0}\,\longrightarrow\,\Omega^{1,0}\otimes_{\mathcal A}E\) be a $\mathcal{A}$-bimodule map. Then it is a $\sigma_E$-bimodule connection if and only if the total $d$-connection is a $\,\overline{\sigma}_E\oplus\sigma_E$-bimodule connection.
\end{lmma}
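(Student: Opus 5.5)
The statement is about the holomorphic connection $\nabla^{1,0}$ on $E$, which by hypothesis exists. It asks that $\nabla^{1,0}$ be a $\sigma_E$-bimodule $\partial$-connection exactly when $\nabla_{\mathrm{tot}}=\nabla^{1,0}+\overline{\nabla}_E$ is a bimodule $d$-connection. The generalized braiding for the latter is
\[
\sigma_{\mathrm{tot}}\,:=\,\overline{\sigma}_E\oplus\sigma_E\ :\ E\otimes_{\mathcal A}\Omega^1\,=\,\big(E\otimes_{\mathcal A}\Omega^{0,1}\big)\oplus\big(E\otimes_{\mathcal A}\Omega^{1,0}\big)\ \longrightarrow\ \big(\Omega^{0,1}\otimes_{\mathcal A}E\big)\oplus\big(\Omega^{1,0}\otimes_{\mathcal A}E\big)\,=\,\Omega^1\otimes_{\mathcal A}E .
\]
Since $\Omega^1=\Omega^{1,0}\oplus\Omega^{0,1}$ is a direct sum of bimodules, these identifications hold as bimodules, and $\sigma_{\mathrm{tot}}$ is a bimodule map because both summands are. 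The plan is to compare the right Leibniz rules component by component in this decomposition.

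First I compute the right Leibniz defect of the total connection. For $e\in E$ and $a\in\mathcal A$, using $da=\partial a+\overline\partial a$ and the additivity of the tensor product,
\[
\nabla_{\mathrm{tot}}(ea)-\nabla_{\mathrm{tot}}(e)a-\sigma_{\mathrm{tot}}(e\otimes da)
=\Big(\nabla^{1,0}(ea)-\nabla^{1,0}(e)a-\sigma_E(e\otimes\partial a)\Big)+\Big(\overline{\nabla}_E(ea)-\overline{\nabla}_E(e)a-\overline{\sigma}_E(e\otimes\overline\partial a)\Big).
\]
The first bracket lies in $\Omega^{1,0}\otimes_{\mathcal A}E$ and the second in $\Omega^{0,1}\otimes_{\mathcal A}E$. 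The second bracket vanishes identically, because $\overline{\nabla}_E$ is assumed to be a $\overline\sigma_E$-bimodule connection (the standing hypothesis of \Cref{Sec4.2}).

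The two directions now follow directly. If $\nabla^{1,0}$ is a $\sigma_E$-bimodule connection, the first bracket vanishes as well, so $\nabla_{\mathrm{tot}}$ is a $\sigma_{\mathrm{tot}}$-bimodule connection. Conversely, suppose the total defect is zero. Project onto the summand $\Omega^{1,0}\otimes_{\mathcal A}E$; this projection is well defined because the decomposition $\Omega^1\otimes_{\mathcal A}E=(\Omega^{1,0}\otimes_{\mathcal A}E)\oplus(\Omega^{0,1}\otimes_{\mathcal A}E)$ is a direct sum. The projection shows that the first bracket vanishes, which is exactly the $\sigma_E$-bimodule property of $\nabla^{1,0}$.

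The main point needing care is this direct-sum bookkeeping: the statement that $\otimes_{\mathcal A}E$ preserves the bimodule splitting of $\Omega^1$, and that $\sigma_{\mathrm{tot}}$ respects it. Everything else is formal. The argument does not use flatness, so the holomorphic-connection hypothesis only serves to fix $\nabla^{1,0}$.
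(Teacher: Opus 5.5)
Your proof is correct and follows essentially the paper's argument: expand $\nabla_{\mathrm{tot}}(ea)$ componentwise, use the right Leibniz rule of $\overline{\nabla}_E$ with respect to $\overline{\sigma}_E$ (the standing hypothesis of that subsection), and cancel the $(0,1)$-part. The only difference is in the converse, where the paper subtracts $\overline{\nabla}_E(ea)$ directly and you project onto $\Omega^{1,0}\otimes_{\mathcal A}E$; since you have already shown the second bracket vanishes, the two steps amount to the same thing.
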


\begin{proof}
Let \(\sigma_E\,:\,E\otimes_{\mathcal A}\Omega^{1,0}\,\longrightarrow\,\Omega^{1,0}\otimes_{\mathcal A}E\) be a bimodule map, and consider
\[
\sigma\ :\ E\otimes_{\mathcal A}\Omega^1\ \longrightarrow\ \Omega^1\otimes_{\mathcal A}E,
\ \ \, e\otimes(\omega^{0,1}+\omega^{1,0})\, \longmapsto
\,\overline{\sigma}_E(e\otimes\omega^{0,1})+\sigma_E(e\otimes\omega^{1,0}).
\]
We show that a left $\partial$-connection \(\nabla^{1,0}\) on $E$ is a right \(\sigma_E\)-connection if and only if the total $d\,=\,\partial+\overline\partial$-connection
\(\nabla_{\mathrm{tot}}\,=\,\nabla^{1,0}+\overline{\nabla}_E\) is a right \(\sigma\)-connection.

Assume first that \(\nabla^{1,0}\) is a right \(\sigma_E\)-connection.
For \(e\,\in \,E\) and \(a\,\in\,\mathcal A\),
\begin{align*}
\nabla_{\mathrm{tot}}(ea)
&=\nabla^{1,0}(ea)+\overline{\nabla}_E(ea)\\
&=\nabla^{1,0}(e)a+\sigma_E(e\otimes\partial a)
+\overline{\nabla}_E(e)a+\overline{\sigma}_E(e\otimes\overline\partial a)\\
&=\nabla_{\mathrm{tot}}(e)a+\sigma(e\otimes da).
\end{align*}
Thus, \(\nabla_{\mathrm{tot}}\) is a right \(\sigma\)-connection. Conversely, suppose that \(\nabla_{\mathrm{tot}}\) is a right \(\sigma\)-connection. Using the right Leibniz rule for \(\overline{\nabla}_E\), we obtain the following\,:
\begin{align*}
\nabla^{1,0}(ea)
&=\nabla_{\mathrm{tot}}(ea)-\overline{\nabla}_E(ea)\\
&=\nabla_{\mathrm{tot}}(e)a+\sigma(e\otimes da)-\overline{\nabla}_E(e)a-\overline{\sigma}_E(e\otimes\overline\partial a)\\
&=\nabla^{1,0}(e)a+\sigma_E(e\otimes\partial a),
\end{align*}
which shows that \(\nabla^{1,0}\) is a right \(\sigma_E\)-connection.
\end{proof}

Given a right \(\sigma_E\)-connection \(\nabla^{1,0}\) on $E$, define the $\mathbb{C}$-linear map
\[
D_E^{1,0}\ :\ \Omega^{0,1}\otimes_{\mathcal A}E\ \longrightarrow\
\Omega^{1,1}\otimes_{\mathcal A}E
\]
by
\[
D_E^{1,0}\ :=\ -\partial\otimes\mathrm{id}_E
+
(\mu\otimes\mathrm{id}_E)\circ(\mathrm{id}_{\Omega^{0,1}}\otimes\nabla^{1,0})
\]
where $\mu$ is as in \Eref{mu}. Suppose that there exists
\[
\Phi_{(2)}\ :\ 
\Omega^{0,1}\otimes_{\mathcal A}\Omega^{1,0}\ \longrightarrow\
\Omega^{1,0}\otimes_{\mathcal A}\Omega^{0,1}
\]
which is a $\mathcal{A}$-bimodule isomorphism (for example, when the calculus over $\mathcal{A}$ is factorisable). Define the $\mathbb{C}$-linear map
\[
\Psi_{\nabla}
:=
(\Phi_{(2)}\circ\mu^{-1}\otimes\mathrm{id}_E)\circ D_E^{1,0}\,:\,
\Omega^{0,1}\otimes_{\mathcal A}E
\longrightarrow
\Omega^{1,0}\otimes_{\mathcal A}(\Omega^{0,1}\otimes_{\mathcal A}E).
\]
Observe that the following map $$(\Phi_{(2)}\otimes\mathrm{id}_E)\circ(\mathrm{id}\otimes\sigma_E):\big(\Omega^{0,1}\otimes_{\mathcal A}E\big)\otimes_{\mathcal{A}}\Omega^{1,0}\longrightarrow\Omega^{1,0}\otimes_{\mathcal{A}}\big(\Omega^{0,1}\otimes_{\mathcal A}E\big)$$ is a $\mathcal{A}$-bimodule map.

\begin{lmma}
The map \(\Psi_{\nabla}\) is a right
\((\Phi_{(2)}\otimes\mathrm{id}_E)\circ(\mathrm{id}\otimes\sigma_E)\)-connection on
\(\Omega^{0,1}\otimes_{\mathcal A}E\).
\end{lmma}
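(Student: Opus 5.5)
The plan is to verify the right Leibniz rule for \(\Psi_{\nabla}\) directly. We first do this at the level of \(D_E^{1,0}\), and then transport it through the bimodule maps \(\mu^{-1}\) and \(\Phi_{(2)}\). Throughout, \(\nabla^{1,0}\) is a left \(\partial\)-connection (so a left Leibniz rule holds) that is also a right \(\sigma_E\)-connection. The identity to prove is
\[
\Psi_{\nabla}(x a)\;=\;\Psi_{\nabla}(x)\,a+(\Phi_{(2)}\otimes\mathrm{id}_E)(\mathrm{id}\otimes\sigma_E)(x\otimes\partial a)
\]
for \(x\in\Omega^{0,1}\otimes_{\mathcal A}E\) and \(a\in\mathcal A\).

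\textbf{Step 1: \(D_E^{1,0}\) is well defined on the balanced tensor product.} We must check that \(D_E^{1,0}(\omega b\otimes e)=D_E^{1,0}(\omega\otimes be)\) for \(\omega\in\Omega^{0,1}\) and \(b\in\mathcal A\). For the first term, the graded Leibniz rule for \(\partial\) on a \(1\)-form gives \(\partial(\omega b)=(\partial\omega)b-\omega\wedge\partial b\). For the second term, the left Leibniz rule for \(\nabla^{1,0}\) gives \(\mu(\omega\otimes\nabla^{1,0}(be))=\omega b\wedge\nabla^{1,0}e+\omega\wedge\partial b\otimes e\). Substituting into both sides, the two \(\omega\wedge\partial b\otimes e\) contributions appear identically on each side, and the remaining terms agree after moving \(b\) across the tensor sign. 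The only point needing care is this sign convention in the graded Leibniz rule for \(\partial\) on \(\Omega^{0,1}\); this is the one place where a slip could occur, so I will do it carefully.

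\textbf{Step 2: right Leibniz rule for \(D_E^{1,0}\).} Take \(x=\omega\otimes e\). Since the right action on \(\Omega^{0,1}\otimes_{\mathcal A}E\) is through \(E\), we have \(xa=\omega\otimes ea\). The first term gives \(-\partial\omega\otimes ea=(-\partial\omega\otimes e)a\). For the second term, the right \(\sigma_E\)-Leibniz rule \(\nabla^{1,0}(ea)=\nabla^{1,0}(e)a+\sigma_E(e\otimes\partial a)\) gives
\[
(\mu\otimes\mathrm{id})(\omega\otimes\nabla^{1,0}(ea))=\bigl((\mu\otimes\mathrm{id})(\omega\otimes\nabla^{1,0}e)\bigr)a+(\mu\otimes\mathrm{id})(\mathrm{id}\otimes\sigma_E)(x\otimes\partial a).
\]
Here we used that \(\mu\) is a bimodule map. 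Adding the two terms yields
\[
D_E^{1,0}(xa)=D_E^{1,0}(x)\,a+(\mu\otimes\mathrm{id}_E)(\mathrm{id}\otimes\sigma_E)(x\otimes\partial a).
\]

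\textbf{Step 3: transport to \(\Psi_{\nabla}\).} Apply \((\Phi_{(2)}\circ\mu^{-1})\otimes\mathrm{id}_E\) to the identity of Step 2. This map is right \(\mathcal A\)-linear, because both \(\mu^{-1}\) and \(\Phi_{(2)}\) are bimodule maps. Applied to the correction term, it turns \((\mu\otimes\mathrm{id})(\mathrm{id}\otimes\sigma_E)(x\otimes\partial a)\) into \((\Phi_{(2)}\otimes\mathrm{id}_E)(\mathrm{id}\otimes\sigma_E)(x\otimes\partial a)\), since \(\mu^{-1}\circ\mu=\mathrm{id}\). This is exactly the required identity.

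Finally, as observed just before the lemma, \((\Phi_{(2)}\otimes\mathrm{id}_E)\circ(\mathrm{id}\otimes\sigma_E)\) is a bimodule map. Hence \(\Psi_{\nabla}\) is a right connection of the stated type. The main obstacle is the sign bookkeeping in Step 1; Steps 2 and 3 are formal.
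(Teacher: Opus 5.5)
Your proposal is correct and follows the paper's proof. Steps 2 and 3 are exactly its computation: first the twisted right Leibniz rule for $D_E^{1,0}$, obtained from the right $\sigma_E$-rule for $\nabla^{1,0}$, and then the transport through $(\Phi_{(2)}\circ\mu^{-1})\otimes\mathrm{id}_E$. Your Step 1 additionally checks that $D_E^{1,0}$ is well defined on the balanced tensor product $\Omega^{0,1}\otimes_{\mathcal A}E$, which the paper leaves implicit; the signs there do cancel as you describe.
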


\begin{proof}
Let \(\omega\otimes e\in \Omega^{0,1}\otimes_{\mathcal A}E\) and \(b\in\mathcal A\).
Since \(\nabla^{1,0}\) is a right \(\sigma_E\)-connection,
\begin{align*}
D_E^{1,0}\bigl((\omega\otimes e)b\bigr)
&=
-\partial\omega\otimes eb
+(\mu\otimes\mathrm{id}_E)\bigl(\omega\otimes\nabla^{1,0}(eb)\bigr)\\
&=
-\partial\omega\otimes eb
+(\mu\otimes\mathrm{id}_E)\bigl(\omega\otimes\nabla^{1,0}(e)b\bigr)
+(\mu\otimes\mathrm{id}_E)\circ(\mathrm{id}\otimes\sigma_E)(\omega\otimes e\otimes\partial b)\\
&=
D_E^{1,0}(\omega\otimes e)\,b
+
(\mu\otimes\mathrm{id}_E)\circ(\mathrm{id}\otimes\sigma_E)(\omega\otimes e\otimes\partial b).
\end{align*}
Applying \(\Phi_{(2)}\circ\mu^{-1}\otimes\mathrm{id}_E\), we conclude that
\[
\Psi_{\nabla}\bigl((\omega\otimes e)b\bigr)
=
\Psi_{\nabla}(\omega\otimes e)\,b
+
\bigl((\Phi_{(2)}\otimes\mathrm{id}_E)\circ(\mathrm{id}\otimes\sigma_E)\bigr)(\omega\otimes e\otimes\partial b),
\]
which is precisely the right Leibniz rule.
\end{proof}

From \Pref{holomorphic structure}, recall the $\overline\partial$-connection
\[
\widetilde{D}_E
=
(\mu^{-1}\otimes\mathrm{id}_E)\circ D_E\,,
\qquad
D_E
=
\overline\partial\otimes\mathrm{id}_E
-
(\wedge\otimes\mathrm{id}_E)(\mathrm{id}_{\Omega^{1,0}}\otimes\overline{\nabla}_E).
\]
on $\Omega^{1,0}\otimes_{\mathcal{A}}E$.

\begin{ppsn}
Let \((E,\,\overline{\nabla}_E)\) be a holomorphic vector bundle over $\mathcal{A}$ such that \(\overline{\nabla}_E\) is a right \(\overline{\sigma}_E\)-connection. If $E$ admits a holomorphic connection \(\nabla^{1,0}\) which is also a right \(\sigma_E\)-connection, then the following diagram commutes:
\[
\begin{tikzcd}
E\otimes\mathcal{A} \arrow[r, "\mathrm{id}\otimes\overline\partial"] \arrow[dr, "\mathrm{id}\otimes\partial" '] & E\otimes_{\mathcal{A}}\Omega^{0,1} \arrow[r, "\overline{\sigma}_E"] & \Omega^{0,1}\otimes_{\mathcal{A}}E \arrow[r, "\Psi_\nabla"] & \Omega^{1,0}\otimes_{\mathcal{A}}\Omega^{0,1}\otimes_{\mathcal{A}}E \\
& E\otimes_{\mathcal{A}}\Omega^{1,0} \arrow[r, "\sigma_E"] & \Omega^{1,0}\otimes_{\mathcal{A}}E \arrow[r, "\widetilde D_E"] & \Omega^{0,1}\otimes_{\mathcal{A}}\Omega^{1,0}\otimes_{\mathcal{A}}E. \arrow[u, "\Phi_{(2)}\otimes\mathrm{id}" ']
\end{tikzcd}
\]
\end{ppsn}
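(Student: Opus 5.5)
The plan is to strip off the isomorphisms. After that, the commutativity of the diagram becomes an identity in $\Omega^{1,1}\otimes_{\mathcal A}E$, which I will read off from the vanishing of the curvature of $\nabla_{\mathrm{tot}}$ at $ea$. Since $\Psi_\nabla=(\Phi_{(2)}\circ\mu^{-1}\otimes\mathrm{id}_E)\circ D_E^{1,0}$ and $\widetilde D_E=(\mu^{-1}\otimes\mathrm{id}_E)\circ D_E$, with $\Phi_{(2)}$ and $\mu$ isomorphisms, applying $(\mu\circ\Phi_{(2)}^{-1}\otimes\mathrm{id}_E)$ to both composites reduces the claim to
\[
D_E\bigl(\sigma_E(e\otimes\partial a)\bigr)\ =\ D_E^{1,0}\bigl(\overline{\sigma}_E(e\otimes\overline\partial a)\bigr)\qquad (e\in E,\ a\in\mathcal A).
\]
The first step is to rewrite \Lref{curvature expression} in the compact form
\[
\Theta_{\nabla_{\mathrm{tot}}}\ =\ D_E\circ\nabla^{1,0}-D_E^{1,0}\circ\overline{\nabla}_E .
\]
This is immediate: $D_E\circ\nabla^{1,0}$ accounts for the second and third terms of the lemma, while $-D_E^{1,0}\circ\overline\nabla_E$ gives $\beta_E$ and the last term. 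Along the way I will check that $D_E$ and $D_E^{1,0}$ are well defined on the balanced tensor products, using the Leibniz rules for $\partial,\overline\partial$ and $\partial\overline\partial=-\overline\partial\partial$.

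Next I will establish right Leibniz rules for both operators. For $D_E^{1,0}$ this is the computation in the preceding lemma:
\[
D_E^{1,0}(\xi b)=D_E^{1,0}(\xi)b+(\mu\otimes\mathrm{id})(\mathrm{id}\otimes\sigma_E)(\xi\otimes\partial b).
\]
For $D_E$ the same computation, using that $\overline\nabla_E$ is a right $\overline\sigma_E$-connection, gives
\[
D_E(\eta b)=D_E(\eta)b-(\wedge\otimes\mathrm{id})(\mathrm{id}\otimes\overline\sigma_E)(\eta\otimes\overline\partial b).
\]
I will then expand $\Theta_{\nabla_{\mathrm{tot}}}(ea)$ using $\nabla^{1,0}(ea)=\nabla^{1,0}(e)a+\sigma_E(e\otimes\partial a)$ and $\overline\nabla_E(ea)=\overline\nabla_E(e)a+\overline\sigma_E(e\otimes\overline\partial a)$. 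The result is
\[
\Theta_{\nabla_{\mathrm{tot}}}(ea)=\Theta_{\nabla_{\mathrm{tot}}}(e)a+C(e,a)+D_E\sigma_E(e\otimes\partial a)-D_E^{1,0}\overline\sigma_E(e\otimes\overline\partial a),
\]
where the cross term is
\[
C(e,a)=-(\wedge\otimes\mathrm{id})(\mathrm{id}\otimes\overline\sigma_E)(\nabla^{1,0}e\otimes\overline\partial a)+(\mu\otimes\mathrm{id})(\mathrm{id}\otimes\sigma_E)(\overline\nabla_E e\otimes\partial a).
\]
Because $\nabla^{1,0}$ is a holomorphic connection, both curvatures vanish. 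The desired identity is therefore equivalent to $C(e,a)=0$.

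The main obstacle is showing that $C(e,a)=0$. My first attempt will be to use the previous lemma: $\nabla_{\mathrm{tot}}$ is a $\sigma=\overline\sigma_E\oplus\sigma_E$ bimodule connection. For such connections one has the standard identity that the curvature is right $\mathcal A$-linear exactly when $\sigma$ is compatible with $\wedge$, in the sense of the bimodule-connection calculus of \cite{BM20}. Here $\Theta_{\nabla_{\mathrm{tot}}}=0$ is trivially a bimodule map, and I expect that extracting that compatibility, tested on $\nabla_{\mathrm{tot}}(e)\otimes da$ and split into bidegrees using $\Omega^{2,0}=\Omega^{0,2}=0$, yields precisely the cancellation of the two terms in $C(e,a)$. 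If that extraction does not go through in this generality, I would record the $\wedge$-compatibility of $\sigma$ as the needed input, which is automatic for a flip-like $\sigma$. With $C=0$, applying $(\Phi_{(2)}\circ\mu^{-1}\otimes\mathrm{id}_E)$ returns the stated commutative diagram.
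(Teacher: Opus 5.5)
\textbf{Gap: the step $C(e,a)=0$ cannot be proved, because it is false.} Your reduction of the diagram to $D_E\bigl(\sigma_E(e\otimes\partial a)\bigr)=D_E^{1,0}\bigl(\overline\sigma_E(e\otimes\overline\partial a)\bigr)$ is correct, and so is the expansion of $\Theta_{\nabla_{\mathrm{tot}}}(ea)$, except for one sign. The second term of $C$ comes from $-D_E^{1,0}(\overline\nabla_E(e)a)$, so it enters with a minus sign, and
\[
C(e,a)\;=\;-(\wedge\otimes\mathrm{id}_E)(\mathrm{id}\otimes\sigma)\bigl(\nabla_{\mathrm{tot}}(e)\otimes da\bigr),\qquad \sigma=\overline\sigma_E\oplus\sigma_E .
\]
This does not vanish in general, even for the flip. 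Take the classical case, which the paper explicitly includes: a Riemann surface $X$, $\mathcal A=C^\infty(X)$, $E=\mathcal A$, $\overline\nabla_E=\overline\partial$, $\nabla^{1,0}=\partial$, with $\sigma_E,\overline\sigma_E$ the flips. Then $\nabla_{\mathrm{tot}}=d$ is flat, $D_E=\overline\partial$ on $\Omega^{1,0}$, $D_E^{1,0}=-\partial$ on $\Omega^{0,1}$, and $C(f,a)=-df\wedge da$. Your reduced identity becomes $\overline\partial(f\,\partial a)=-\partial(f\,\overline\partial a)$, i.e.\ $df\wedge da=0$. This fails for $f=\bar z$, $a=z$ near a point, cut off by a bump function. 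So your fallback, that $\wedge$-compatibility of $\sigma$ is automatic for flips and kills $C$, is wrong. The flip is as compatible with $\wedge$ as possible, and $C\neq 0$. What right linearity of the (zero) curvature of a bimodule connection actually gives is $C(e,a)=D_E^{1,0}\overline\sigma_E(e\otimes\overline\partial a)-D_E\sigma_E(e\otimes\partial a)$. That is circular: it ties $C$ to the very quantity you want to vanish.

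\textbf{Comparison with the paper.} The paper's proof asserts $0=\Theta_{\nabla_{\mathrm{tot}}}(ea)-\Theta_{\nabla_{\mathrm{tot}}}(e)a=D_E(\sigma_E(e\otimes\partial a))-D_E^{1,0}(\overline\sigma_E(e\otimes\overline\partial a))$ directly from Lemma~\ref{curvature expression}. It silently drops exactly your cross term $C(e,a)$. Your expansion is more careful than the paper's, and what it actually establishes is
\[
\Psi_\nabla\bigl(\overline\sigma_E(e\otimes\overline\partial a)\bigr)-(\Phi_{(2)}\otimes\mathrm{id}_E)\,\widetilde D_E\bigl(\sigma_E(e\otimes\partial a)\bigr)\;=\;(\Phi_{(2)}\circ\mu^{-1}\otimes\mathrm{id}_E)\,C(e,a).
\]
So the diagram commutes only under an extra hypothesis equivalent to $(\wedge\otimes\mathrm{id}_E)(\mathrm{id}\otimes\sigma)(\nabla_{\mathrm{tot}}(e)\otimes da)=0$ for all $e,a$. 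No route proves the statement as written; the classical example above contradicts it.
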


\begin{proof}
If $E$ admits a holomorphic connection \(\nabla^{1,0}\) on it, then the curvature $\Theta_{\nabla_{\mathrm{tot}}}$ of the total connection $\nabla_{\mathrm{tot}}=\nabla^{1,0}+\overline{\nabla}_E$ is zero by definition. For \(e\in E\) and \(a\in\mathcal A\), the vanishing of the curvature
\(\Theta_{\nabla_{\mathrm{tot}}}(ea)=0\) and $\Theta_{\nabla_{\mathrm{tot}}}(e)=0$, together with \eqref{curvature expression-eq}, yields
\begin{align*}
0
&=
\Theta_{\nabla_{\mathrm{tot}}}(ea)-\Theta_{\nabla_{\mathrm{tot}}}(e)a\\
&=
-D_E^{1,0}\bigl(\overline{\sigma}_E(e\otimes\overline\partial a)\bigr)
+
D_E\bigl(\sigma_E(e\otimes\partial a)\bigr).
\end{align*}
Applying \(\Phi_{(2)}\circ\mu^{-1}\otimes\mathrm{id}_E\) to both sides, we conclude that
\[
\Psi_\nabla\bigl(\overline{\sigma}_E(e\otimes\overline\partial a)\bigr)
=
(\Phi_{(2)}\otimes\mathrm{id}_E)\widetilde D_E\bigl(\sigma_E(e\otimes\partial a)\bigr),
\]
which is precisely the commutativity of the diagram.
\end{proof}


\section{The case of the quantum projective line}\label{sec5}

The quantum projective line $\mathbb{CP}_q^1$ exhibits a special feature among the noncommutative complex curves. Classically, this is the Riemann sphere. We refer to \cite{KLvS11} for essential facts.

\subsection{Lift of holomorphic structure as bimodule connection}

Recall from Definition \ref{bimod con} the definition of a bimodule connection. The morphism between two bimodule connections $(E,\,\nabla_E,\,\sigma_E)$ and $(F,\,\nabla_F,\,\sigma_F)$ over $E$ and $F$ respectively, is a bimodule map $\theta\,:\,E\,\longrightarrow\, F$ that intertwines the bimodule covariant derivatives \cite[Section 3.4.2]{BM20}. The category of bimodule connections is rich and, in fact, is a monoidal category \cite[Theorem 3.78]{BM20}. It may be noted that being a bimodule connection is a property of a left connection, and not an additional independent data.

Let $\mathcal{A}(\mathbb{CP}_q^1)$ denote the coordinate algebra of the quantum projective line. We simply denote it by $\mathcal{A}$ for notational brevity throughout this section.
\medskip

Suppose that $E$ is a holomorphic vector bundle over $\mathcal{A}$, and in addition, $E$ is $\mathcal{A}$-bimodule. Also suppose that the given holomorphic structure $\overline{\nabla}_E$ on $E$ is, in particular, a bimodule connection. This holds, for example, for line bundles $\mathcal L_n$ over $\mathbb{CP}_q^1$ equipped with the standard holomorphic structures \cite{KLvS11}. We investigate whether the holomorphic structure $\overline{\nabla}_J$ on $J_E^1\,$, obtained in \Pref{holomorphic structure}, is again a bimodule connection in the case of $\mathbb{CP}_q^1\,$.

Assume that there exists a (unique) bimodule map
\[
\overline{\sigma}_E\ :\ E\otimes_{\mathcal A}\Omega^{0,1}\ \longrightarrow\ \Omega^{0,1}\otimes_{\mathcal A}E
\]
such that
\[
\overline{\nabla}_E(ea)\ =\ \overline{\nabla}_E(e)a+\overline{\sigma}_E(e\otimes \overline{\partial} a)
\]
for all $e\, \in\,  E$ and $a\,\in\, \mathcal A$. Recall from \Cref{Sec3.1} that, in order to define a right $\mathcal A$-module structure on $J_E^1\,$, one needs a bimodule map
\[
\sigma_E\ :\ E\otimes_{\mathcal A}\Omega^{1,0}\ \longrightarrow\ \Omega^{1,0}\otimes_{\mathcal A}E.
\]
A priori, there is no relationship between $\sigma_E$ and $\overline{\sigma}_E$. Thus, if one wants to lift the $\overline{\sigma}_E$-bimodule $\overline\partial$-connection $\overline{\nabla}_E$ to a bimodule connection on $J_E^1$ whose right $\mathcal A$-action is induced by $\sigma_E$, some sort of compatibility relations between $\sigma_E$ and $\overline{\sigma}_E$ is naturally required.

Also, recall from \Cref{Sec3.1} that if the jet sequence splits as $\mathcal{A}$-bimodules, then $E$ carries a bimodule $\partial$-connection $(\nabla_E,\,\sigma_E)$, i.e.,
\[
\nabla_E(ea)\ =\ \nabla_E(e)a+\sigma_E(e\otimes\partial a)
\]
and the converse also holds.
%

We begin by proving an extension result.
\begin{ppsn}\label{pp12}
Let $(E,\,\overline{\nabla}_E)$ be a holomorphic vector bundle over the quantum projective line $\mathcal{A}:=\mathcal{A}(\mathbb{CP}_q^1)$ such that $E$ is $\mathcal{A}$-bimodule and $\overline{\nabla}_E$ is a bimodule $\overline\partial$-connection. Assume that $E$ carries a bimodule $\partial$-connection so that the jet sequence in \eqref{ejs} splits in the bimodule category. Then any bimodule map
\[
\psi\ :\ \bigl(\Omega^{1,0}\otimes_{\mathcal{A}}E\bigr)\otimes_{\mathcal{A}}\Omega^{0,1}\ \longrightarrow\
\Omega^{0,1}\otimes_{\mathcal{A}}\bigl(\Omega^{1,0}\otimes_{\mathcal{A}}E\bigr)
\]
defined on the kernel part of the jet sequence extends to a bimodule map
\[
\widetilde{\psi}:J_E^1\otimes_{\mathcal{A}}\Omega^{0,1}
\ \longrightarrow\
\Omega^{0,1}\otimes_{\mathcal{A}}J_E^1
\]
on the jet bimodule $J_E^1\,$.
\end{ppsn}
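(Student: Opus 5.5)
Since the jet sequence splits in the bimodule category, \Lref{bimod splitting} gives a bimodule $\sigma_E$-connection $\nabla_E$, and the map $j_E^1(e)=e+\nabla_E(e)$ is an $\mathcal A$-bimodule splitting. The plan is to use it to identify $J_E^1$ with the bimodule direct sum
\[
\Theta\ :\ E\oplus\bigl(\Omega^{1,0}\otimes_{\mathcal A}E\bigr)\ \xrightarrow{\ \cong\ }\ J_E^1,\qquad (e,\,\eta)\longmapsto j_E^1(e)+i(\eta).
\]
Here the source is the genuine direct sum of bimodules. $\Theta$ is a bimodule isomorphism because $j_E^1$ and $i$ are bimodule maps and $\pi\circ j_E^1=\mathrm{id}_E$. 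Its inverse is $(e,\eta)\mapsto\bigl(e,\,\eta-\nabla_E(e)\bigr)$ in the vector space description of $J_E^1$.

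On the direct sum we define the candidate
\[
\widehat\psi\ :=\ \overline{\sigma}_E\oplus\psi\ :\ \bigl(E\otimes_{\mathcal A}\Omega^{0,1}\bigr)\oplus\bigl((\Omega^{1,0}\otimes_{\mathcal A}E)\otimes_{\mathcal A}\Omega^{0,1}\bigr)\longrightarrow \bigl(\Omega^{0,1}\otimes_{\mathcal A}E\bigr)\oplus\bigl(\Omega^{0,1}\otimes_{\mathcal A}\Omega^{1,0}\otimes_{\mathcal A}E\bigr).
\]
This uses the bimodule map $\overline{\sigma}_E$ supplied by the hypothesis that $\overline\nabla_E$ is a bimodule $\overline\partial$-connection. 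Any bimodule map $E\otimes_{\mathcal A}\Omega^{0,1}\to\Omega^{0,1}\otimes_{\mathcal A}E$ would serve equally well. We then set
\[
\widetilde\psi\ :=\ (\mathrm{id}_{\Omega^{0,1}}\otimes\Theta)\circ\widehat\psi\circ(\Theta^{-1}\otimes\mathrm{id}_{\Omega^{0,1}}).
\]
This uses that $-\otimes_{\mathcal A}\Omega^{0,1}$ and $\Omega^{0,1}\otimes_{\mathcal A}-$ are additive functors. They therefore commute with finite direct sums of bimodules, and they send the bimodule isomorphism $\Theta$ to bimodule isomorphisms. A composition of bimodule maps is a bimodule map, so $\widetilde\psi$ is a bimodule map.

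It remains to check that $\widetilde\psi$ extends $\psi$, that is,
\[
\widetilde\psi\circ(i\otimes\mathrm{id})=(\mathrm{id}\otimes i)\circ\psi .
\]
This holds because $\Theta^{-1}\circ i$ is the inclusion of the second summand. So $\Theta^{-1}\otimes\mathrm{id}$ sends $\eta\otimes\omega$ to $(0,\,\eta\otimes\omega)$, which $\widehat\psi$ maps to $(0,\,\psi(\eta\otimes\omega))$, and $\mathrm{id}\otimes\Theta$ maps this to $(\mathrm{id}\otimes i)\psi(\eta\otimes\omega)$.

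The only delicate point is the bookkeeping around $J_E^1$: its vector space decomposition $E\oplus_{\mathbb C}(\Omega^{1,0}\otimes_{\mathcal A}E)$ is not a bimodule decomposition, since the right action involves $\sigma_E$. One must therefore pass through $\Theta$ instead of defining $\widetilde\psi$ componentwise in the naive coordinates. Explicitly, in those coordinates $\widetilde\psi$ corrects by $\nabla_E$:
\[
(e,\eta)\otimes\omega\ \longmapsto\ \overline{\sigma}_E(e\otimes\omega)\ \text{plus the terms } (\mathrm{id}\otimes\nabla_E)\overline\sigma_E(e\otimes\omega)+\psi\bigl((\eta-\nabla_E e)\otimes\omega\bigr)\ \text{in the kernel part}.
\]
Checking directly that this formula is well defined on $J_E^1\otimes_{\mathcal A}\Omega^{0,1}$, i.e.\ balanced, is exactly what the isomorphism $\Theta$ handles for free. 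The argument uses no special feature of $\mathbb{CP}_q^1$ beyond these hypotheses.
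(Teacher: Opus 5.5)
Your proof is correct, and the map you construct is exactly the paper's. Written out in the coordinates $J_E^1=E\oplus_{\mathbb C}(\Omega^{1,0}\otimes_{\mathcal A}E)$, your $(\mathrm{id}\otimes\Theta)\circ(\overline{\sigma}_E\oplus\psi)\circ(\Theta^{-1}\otimes\mathrm{id})$ is the paper's $\overline{\sigma}_E(e\otimes\omega)+\psi(\xi)+\bigl((\mathrm{id}\otimes\nabla_E)\circ\overline{\sigma}_E-\psi\circ(\nabla_E\otimes\mathrm{id})\bigr)(e\otimes\omega)$. What differs is how bimodule-linearity is established.

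The paper takes this formula as the definition and checks left and right $\mathcal A$-linearity by hand in the vector-space coordinates. The left $\partial$-Leibniz rule of $\nabla_E$ cancels the contribution $\psi(\partial a\otimes e\otimes\omega)$ that comes from the twisted left action on $J_E^1$. The $\sigma_E$-twisted right Leibniz rule produces the term $(\mathrm{id}\otimes\sigma_E)\bigl(\overline{\sigma}_E(e\otimes\omega)\otimes\partial a\bigr)$ required by the twisted right action on $\Omega^{0,1}\otimes_{\mathcal A}J_E^1$. You instead use the bimodule splitting to replace the $\mathbb C$-linear decomposition by a genuine bimodule decomposition $\Theta$, and transport $\overline{\sigma}_E\oplus\psi$ along it, so linearity is automatic.

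The paper's computation shows concretely which Leibniz rule absorbs which twist. Your route does three further things:
\begin{enumerate}[$(i)$]
\item it explains where the correction term comes from, namely conjugation by the splitting;
\item it makes the extension property $\widetilde\psi\circ(i\otimes\mathrm{id})=(\mathrm{id}\otimes i)\circ\psi$ explicit;
\item it settles well-definedness on the balanced tensor products, which the coordinate computation leaves implicit.
\end{enumerate}
On the last point: individually, $\nabla_E\otimes\mathrm{id}$ on $E\otimes_{\mathcal A}\Omega^{0,1}$ and $\mathrm{id}\otimes\nabla_E$ on $\Omega^{0,1}\otimes_{\mathcal A}E$ are not well defined. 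Only the combinations $\Theta^{-1}\otimes\mathrm{id}$ and $\mathrm{id}\otimes j_E^1$ are.

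Your remark that nothing specific to $\mathbb{CP}_q^1$ is used is also right, and agrees with the paper's subsequent remark. So is your remark that $\overline{\sigma}_E$ could be replaced by any bimodule map.
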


\begin{proof}
A bimodule splitting of the jet sequence is equivalent to the choice of a bimodule $\partial$-connection on $E$. Let $\overline{\nabla}_E$ be a bimodule $\overline\partial$-connection with respect to bimodule map
\[
\overline{\sigma}_E\ :\ E\otimes_{\mathcal A}\Omega^{0,1}\ \longrightarrow\ \Omega^{0,1}\otimes_{\mathcal A}E,
\]
and let $(\nabla_E,\,\sigma_E)$ be a bimodule $\partial$-connection with respect to a bimodule map
\[
\sigma_E\ :\ E\otimes_{\mathcal A}\Omega^{1,0}\ \longrightarrow\ \Omega^{1,0}\otimes_{\mathcal A}E\,.
\]
Now, given $\psi$, define the following $\mathbb C$-linear map
\begin{align*}
\widetilde{\psi}\ :\ J_E^1\otimes_{\mathcal A}\Omega^{0,1}
&\longrightarrow
\Omega^{0,1}\otimes_{\mathcal A}J_E^1\\
e\otimes\omega\oplus\xi\,
&\longmapsto\,
\overline{\sigma}_E(e\otimes\omega)+\psi(\xi)
+\bigl((\mathrm{id}\otimes\nabla_E)\circ\overline{\sigma}_E-\psi\circ(\nabla_E\otimes\mathrm{id})\bigr)(e\otimes\omega).
\end{align*}
For $a\,\in\,\mathcal A$, using the fact that both $\overline{\sigma}_E$ and $\psi$ are left module maps, it follows that
\begin{IEEEeqnarray*}{lCl}
& & \widetilde{\psi}\bigl(a\,.\,(e\otimes\omega\oplus\xi)\bigr)\\
&=& \widetilde{\psi}(ae\otimes\omega\oplus a\xi+\partial a\otimes e\otimes\omega)\\
&=& \overline{\sigma}_E(ae\otimes\omega)\oplus\psi(a\xi)+\psi(\partial a\otimes e\otimes\omega)
+(\mathrm{id}\otimes\nabla_E)\circ\sigma_1(ae\otimes\omega)\\
& & -\psi\circ(\nabla_E\otimes\mathrm{id})(ae\otimes\omega)\\
&=& a\,\overline{\sigma}_E(e\otimes\omega)\oplus a\,\psi(\xi)+\psi(\partial a\otimes e\otimes\omega)
+a\,(\mathrm{id}\otimes\nabla_E)\circ\sigma_1(e\otimes\omega)\\
& & -\psi\bigl(a\nabla_E(e)\otimes\omega+\partial a\otimes e\otimes\omega\bigr)\\
&=& a\,\overline{\sigma}_E(e\otimes\omega)\oplus a\,\psi(\xi)
+a\,(\mathrm{id}\otimes\nabla_E)\circ\sigma_1(e\otimes\omega)
-a\,\psi\circ(\nabla_E\otimes\mathrm{id})(e\otimes\omega)\\
&=& a\,.\,\widetilde{\psi}(e\otimes\omega\oplus\xi).
\end{IEEEeqnarray*}
Hence $\widetilde{\psi}$ is a left $\mathcal A$-module map. Next we check right $\mathcal A$-linearity:
\begin{IEEEeqnarray*}{lCl}
& & \widetilde{\psi}\bigl((e\otimes\omega\oplus\xi)\,.\,a\bigr)\\
&=& \widetilde{\psi}(e\otimes\omega a\oplus \xi.a)\\
&=& \overline{\sigma}_E(e\otimes\omega a)\oplus\psi(\xi.a)
+(\mathrm{id}\otimes\nabla_E)\circ\overline{\sigma}_E(e\otimes\omega a)
-\psi\circ(\nabla_E\otimes\mathrm{id})(e\otimes\omega a)\\
&=& \overline{\sigma}_E(e\otimes\omega)a\oplus\psi(\xi)a
+(\mathrm{id}\otimes\nabla_E)\bigl(\overline{\sigma}_E(e\otimes\omega)a\bigr)
-\psi\circ(\nabla_E\otimes\mathrm{id})(e\otimes\omega a)\\
&=& \overline{\sigma}_E(e\otimes\omega)a\oplus\psi(\xi)a
+(\mathrm{id}\otimes\nabla_E)\bigl(\overline{\sigma}_E(e\otimes\omega)\bigr)a
+(\mathrm{id}\otimes\sigma_E)\bigl(\overline{\sigma}_E(e\otimes\omega)\otimes\partial a\bigr)\\
& & -\psi\circ(\nabla_E\otimes\mathrm{id})(e\otimes\omega)a.
\end{IEEEeqnarray*}
Here the last equality uses the twisted right Leibniz rule for $\nabla_E$ with respect to $\sigma_E$. Now, by the right multiplication on $\Omega^{0,1}\otimes_{\mathcal A}J_E^1$, and using the fact that
\[
\overline{\sigma}_E(e\otimes\omega)\ \in\ \Omega^{0,1}\otimes_{\mathcal A}E
\ \subseteq\ \Omega^{0,1}\otimes_{\mathcal A}J_E^1,
\]
we have
\[
\bigl(\overline{\sigma}_E(e\otimes\omega)\bigr)\,.\,a\ =\ 
\overline{\sigma}_E(e\otimes\omega)a
\oplus
(\mathrm{id}\otimes\sigma_E)\bigl(\overline{\sigma}_E(e\otimes\omega)\otimes\partial a\bigr).
\]
Therefore, it follows that
\[
\widetilde{\psi}\bigl((e\otimes\omega\oplus\xi)\,.\,a\bigr)\ =\
\widetilde{\psi}(e\otimes\omega\oplus\xi)\,.\,a.
\]
So $\widetilde{\psi}$ is also a right $\mathcal A$-module map.
\end{proof}

\begin{rmrk}
The above result is in fact true for any noncommutative complex curve. If one does not assume a bimodule splitting of the jet sequence, equivalently, if the existence of a bimodule $\partial$-connection $\nabla_E$ is not known beforehand, it is unclear to us at the moment whether the above extension result \emph{always} holds using only the data $(\overline{\sigma}_E,\,\overline{\nabla}_E,\,\sigma_E)$.
\end{rmrk}

In the situation of the holomorphic structure $\overline{\nabla}_J$ on $J_E^1$ constructed in \Pref{holomorphic structure}, a distinguished bimodule map $\psi_0$, as in \Pref{pp12}, turns out to be relevant in the case of $\mathbb{CP}_q^1$. For this specific map, it happens that one does not need to assume a bimodule splitting of the jet sequence beforehand; in particular, the existence of a $\partial$-connection on $E$ is not required.

Given the data $(\overline{\sigma}_E,\,\overline{\nabla}_E,\,\sigma_E)$ on $E$, consider the following linear map
\begin{equation}\label{reqd map}
\psi_0\ :=\ -(\mu^{-1}\otimes\mathrm{id}_E)\circ(\wedge\otimes
\mathrm{id}_E)\circ(\mathrm{id}_{\Omega^{1,0}}\otimes\overline{\sigma}_E)
\ :\ \bigl(\Omega^{1,0}\otimes_{\mathcal A}E\bigr)\otimes_{\mathcal A}\Omega^{0,1}
\ \longrightarrow\ \Omega^{0,1}\otimes_{\mathcal A}\bigl(\Omega^{1,0}\otimes_{\mathcal A}E\bigr)
\end{equation}
which is a $\mathcal{A}$-bimodule map. Recall from \Pref{holomorphic structure} the map
\[
\widetilde D_E\ :\ \Omega^{1,0}\otimes_{\mathcal A}E\ \longrightarrow\ 
\Omega^{0,1}\otimes_{\mathcal A}\bigl(\Omega^{1,0}\otimes_{\mathcal A}E\bigr),
\]
given by
\[
\widetilde D_E\ :=\
(\mu^{-1}\otimes \mathrm{id}_E)\circ(\overline{\partial}\otimes \mathrm{id}_E)
-
(\mu^{-1}\otimes \mathrm{id}_E)\circ(\wedge\otimes \mathrm{id}_E)\circ(\mathrm{id}_{\Omega^{1,0}}\otimes\overline\nabla_E).
\]
Also recall that, $\widetilde D_E$ is a left $\overline\partial$-connection.

\begin{lmma}\label{to be used here}
For all $a\,\in\,\mathcal A$, $e\,\in\, E$ and $\omega\,\in\,\Omega^{1,0}$,
\[
\widetilde D_E\bigl(\sigma_E(e\otimes\omega)\,.\,a\bigr)\ =\
\widetilde D_E\bigl(\sigma_E(e\otimes\omega)\bigr)a
+\psi_0\bigl(\sigma_E(e\otimes\omega)\otimes\overline{\partial} a\bigr),
\]
where $\psi_0$ is constructed in \eqref{reqd map}.
\end{lmma}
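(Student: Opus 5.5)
The plan is to prove the identity for an arbitrary element $\eta\in\Omega^{1,0}\otimes_{\mathcal A}E$ and then specialise to $\eta=\sigma_E(e\otimes\omega)$. The right $\mathcal A$-action on $\Omega^{1,0}\otimes_{\mathcal A}E$ is the one induced from the bimodule structure of $E$, namely $(\omega'\otimes f)\,.\,a=\omega'\otimes fa$. The particular form $\sigma_E(e\otimes\omega)$ of the argument plays no role beyond being an element of $\Omega^{1,0}\otimes_{\mathcal A}E$. By additivity of both sides, it suffices to treat a simple tensor $\eta=\omega'\otimes f$ with $\omega'\in\Omega^{1,0}$ and $f\in E$.

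First I work with the untwisted map $D_E$ from \eqref{eq:DE-def}. For the first term,
\[
(\overline\partial\otimes\mathrm{id}_E)(\omega'\otimes fa)=\overline\partial\omega'\otimes fa=\bigl((\overline\partial\otimes\mathrm{id}_E)(\omega'\otimes f)\bigr)a .
\]
For the second term I use the right twisted Leibniz rule $\overline\nabla_E(fa)=\overline\nabla_E(f)a+\overline\sigma_E(f\otimes\overline\partial a)$. This gives
\[
(\wedge\otimes\mathrm{id}_E)(\omega'\otimes\overline\nabla_E(fa))
=\bigl((\wedge\otimes\mathrm{id}_E)(\omega'\otimes\overline\nabla_E(f))\bigr)a
+(\wedge\otimes\mathrm{id}_E)(\mathrm{id}_{\Omega^{1,0}}\otimes\overline\sigma_E)(\omega'\otimes f\otimes\overline\partial a).
\]
Here I use that $\wedge\otimes\mathrm{id}_E$ commutes with right multiplication by $a$, since $\wedge$ is a bimodule map and $E$ is a bimodule. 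Subtracting the two lines yields
\[
D_E(\eta\,.\,a)=D_E(\eta)a-(\wedge\otimes\mathrm{id}_E)(\mathrm{id}_{\Omega^{1,0}}\otimes\overline\sigma_E)(\eta\otimes\overline\partial a).
\]

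Next I apply $\mu^{-1}\otimes\mathrm{id}_E$. Since $\mu$ is an $\mathcal A$-bimodule isomorphism by \Dref{NC complex curve}, the map $\mu^{-1}\otimes\mathrm{id}_E$ is right $\mathcal A$-linear. It therefore sends $D_E(\eta)a$ to $\widetilde D_E(\eta)a$, and by \eqref{reqd map} it sends the correction term exactly to $\psi_0(\eta\otimes\overline\partial a)$. Substituting $\eta=\sigma_E(e\otimes\omega)$ then gives the statement.

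The only point needing care is well-definedness. Each of the operators $\overline\partial\otimes\mathrm{id}_E$ and $(\wedge\otimes\mathrm{id}_E)(\mathrm{id}\otimes\overline\nabla_E)$ separately is not well defined on the balanced tensor product $\Omega^{1,0}\otimes_{\mathcal A}E$; only their difference $D_E$ is. The same holds for the expression $\omega'\otimes f\otimes\overline\partial a$ when one passes to representatives. I would handle this by doing the computation on representatives in $\Omega^{1,0}\otimes_{\mathbb C}E$. The resulting identity relates $D_E(\eta\,.\,a)$, $D_E(\eta)a$ and $\psi_0(\eta\otimes\overline\partial a)$. All three are well defined on the quotient: $D_E$ by \Pref{holomorphic structure}, and $\psi_0$ because it is a composite of bimodule maps. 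Hence the identity descends to $\Omega^{1,0}\otimes_{\mathcal A}E$. Apart from this bookkeeping the argument is a direct computation, and I expect no real obstacle.
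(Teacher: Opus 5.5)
Your proposal is correct and follows the paper's own proof: you expand $D_E$ on the right-translated element using the right $\overline{\sigma}_E$-Leibniz rule for $\overline\nabla_E$, identify the correction term as $(\mu\otimes\mathrm{id}_E)\circ\psi_0$, and then apply $\mu^{-1}\otimes\mathrm{id}_E$. Working with an arbitrary $\eta\in\Omega^{1,0}\otimes_{\mathcal A}E$ rather than $\sigma_E(e\otimes\omega)$ is harmless, since the paper's computation never uses the special form either, and your care about well-definedness on representatives is a welcome addition.
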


\begin{proof}
Write
\[
\sigma_E(e\otimes\omega)\ =\ \sum_j \xi_j\otimes e_j.
\]
Using the fact that $\overline{\nabla}_E$ is a right $\overline{\sigma}_E$-connection, we compute:
\begin{IEEEeqnarray*}{lCl}
& & D_E\bigl(\sigma_E(e\otimes\omega)\,.\,a\bigr)\\
&=& (\overline\partial\otimes\mathrm{id}_E)\bigl(\sigma_E(e\otimes\omega)\,.\,a\bigr)
-(\wedge\otimes\mathrm{id}_E)\circ(\mathrm{id}_{\Omega^{1,0}}\otimes\overline{\nabla}_E)\bigl(\sigma_E(e\otimes\omega)\,.\,a\bigr)\\
&=& (\overline\partial\otimes\mathrm{id}_E)\bigl(\sigma_E(e\otimes\omega)\bigr)a
-(\wedge\otimes\mathrm{id}_E)\Bigl(\sum_j \xi_j\otimes\overline{\nabla}_E(e_j)a+\xi_j\otimes\overline{\sigma}_E(e_j\otimes\overline\partial a)\Bigr)\\
&=& (\overline\partial\otimes\mathrm{id}_E)\bigl(\sigma_E(e\otimes\omega)\bigr)a
-\sum_j \xi_j\wedge\overline{\nabla}_E(e_j)a
-(\wedge\otimes\mathrm{id}_E)\circ(\mathrm{id}_{\Omega^{1,0}}\otimes\overline{\sigma}_E)\bigl(\sigma_E(e\otimes\omega)\otimes\overline\partial a\bigr)\\
&=& (\overline\partial\otimes\mathrm{id}_E)\bigl(\sigma_E(e\otimes\omega)\bigr)a
-(\wedge\otimes\mathrm{id}_E)\circ(\mathrm{id}_{\Omega^{1,0}}\otimes\overline{\nabla}_E)\bigl(\sigma_E(e\otimes\omega)\bigr)a\\
& & -(\wedge\otimes\mathrm{id}_E)\circ(\mathrm{id}_{\Omega^{1,0}}\otimes\overline{\sigma}_E)\bigl(\sigma_E(e\otimes\omega)\otimes\overline\partial a\bigr)\\
&=& D_E\bigl(\sigma_E(e\otimes\omega)\bigr)a
+(\mu\otimes\mathrm{id}_E)\circ\psi_0\bigl(\sigma_E(e\otimes\omega)\otimes\overline\partial a\bigr).
\end{IEEEeqnarray*}
Since
\[
\widetilde D_E\ =\ (\mu^{-1}\otimes\mathrm{id}_E)\circ D_E,
\]
the result follows.
\end{proof}

\begin{crlre}
If
\[
\sigma_E\ :\ E\otimes_{\mathcal A}\Omega^{1,0}\ \longrightarrow\ \Omega^{1,0}\otimes_{\mathcal A}E
\]
is surjective, then $\widetilde D_E$ is a bimodule $\psi_0$-connection.
\end{crlre}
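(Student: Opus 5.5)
The plan is to deduce the corollary directly from \Lref{to be used here}, using surjectivity of $\sigma_E$ to pass from the image of $\sigma_E$ to all of $\Omega^{1,0}\otimes_{\mathcal A}E$. We already know from \Pref{holomorphic structure} that $\widetilde D_E$ is a left $\overline\partial$-connection on $\Omega^{1,0}\otimes_{\mathcal A}E$. We also know that $\psi_0$ in \eqref{reqd map} is an $\mathcal A$-bimodule map
\[
\bigl(\Omega^{1,0}\otimes_{\mathcal A}E\bigr)\otimes_{\mathcal A}\Omega^{0,1}\ \longrightarrow\ \Omega^{0,1}\otimes_{\mathcal A}\bigl(\Omega^{1,0}\otimes_{\mathcal A}E\bigr).
\]
So by \Dref{bimod con}, adapted to $\overline\partial$, the only thing left to verify is the twisted right Leibniz rule
\[
\widetilde D_E(\eta\,.\,a)\ =\ \widetilde D_E(\eta)a+\psi_0(\eta\otimes\overline\partial a)
\]
for every $\eta\in\Omega^{1,0}\otimes_{\mathcal A}E$ and $a\in\mathcal A$.

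The first step is to use surjectivity of $\sigma_E$. Every element of $E\otimes_{\mathcal A}\Omega^{1,0}$ is a finite sum of simple tensors $e_k\otimes\omega_k$, so any $\eta$ can be written as $\eta=\sum_k\sigma_E(e_k\otimes\omega_k)$. For each summand, \Lref{to be used here} gives the identity exactly.

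The second step is to sum over $k$. Both sides of the Leibniz identity are additive in $\eta$: $\widetilde D_E$ is $\mathbb C$-linear, right multiplication by $a$ is additive, and $\psi_0(-\otimes\overline\partial a)$ is additive. Hence the identity holds for $\eta$ itself, and this proves the claim.

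I expect no real obstacle. The only point needing care is the meaning of the right action on $\Omega^{1,0}\otimes_{\mathcal A}E$ in \Lref{to be used here}. It must be the plain right action $\xi\otimes e\mapsto\xi\otimes ea$ of the bimodule $\Omega^{1,0}\otimes_{\mathcal A}E$ (the kernel part of the jet bimodule), and not the twisted action on $J_E^1$. With that reading, the statement matches the definition of a right $\psi_0$-connection. Finally, $\psi_0$ is well defined on the balanced tensor product because $\overline\sigma_E$, $\wedge$ and $\mu^{-1}$ are bimodule maps, so no further well-definedness check is needed.
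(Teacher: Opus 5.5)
Your proposal is correct and matches the argument the paper intends; the corollary is stated without proof right after \Lref{to be used here}. The left Leibniz rule for $\widetilde D_E$ and the bimodule property of $\psi_0$ are already established, and surjectivity of $\sigma_E$ together with additivity extends the twisted right Leibniz rule from the image of $\sigma_E$ to all of $\Omega^{1,0}\otimes_{\mathcal A}E$.

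As a side remark, the proof of \Lref{to be used here} never uses that the element lies in the image of $\sigma_E$. It only uses a representative $\sum_j\xi_j\otimes e_j$ and the right $\overline{\sigma}_E$-Leibniz rule for $\overline{\nabla}_E$, so the identity holds for every $\eta$ and the surjectivity hypothesis is not actually needed.
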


Assume that $J_E^1$ is a bimodule over $\mathcal A$, so that we are given a bimodule map
\[
\sigma_E\ :\ E\otimes_{\mathcal A}\Omega^{1,0}\ \longrightarrow\ \Omega^{1,0}\otimes_{\mathcal A}E.
\]
Also, assume that $\overline{\nabla}_E$ is a $\overline{\sigma}_E$-bimodule $\overline\partial$-connection.

In the case of $\mathbb{CP}_q^1$, it is known that $\,\overline\partial\, :\,\mathcal A\,\longrightarrow\,\Omega^{0,1}$ is surjective \cite[Proposition~7.2]{DL}. We define a $\mathbb C$-linear map
\[
\sigma\ :\ E\otimes_{\mathcal A}\Omega^{0,1}\ \longrightarrow\ \Omega^{0,1}\otimes_{\mathcal A}\Omega^{1,0}\otimes_{\mathcal A}E
\]
that sends any $e\otimes\overline\partial a$ to
\[
(\mu^{-1}\circ\partial\otimes\mathrm{id}_E)\bigl(\overline{\sigma}_E(e\otimes\overline\partial a)\bigr)
+\widetilde D_E\bigl(\sigma_E(e\otimes\partial a)\bigr)
-(\mathrm{id}\otimes\sigma_E)\bigl(\overline{\nabla}_E(e)\otimes\partial a\bigr),
\]
which is well defined because $\ker(\overline\partial)=\ker(\partial)=\mathbb C$.

Now define
\begin{IEEEeqnarray}{lCl}\label{sigmaJ}
\sigma_J\ :\ J_E^1\otimes_{\mathcal A}\Omega^{0,1}\ \longrightarrow\ \Omega^{0,1}\otimes_{\mathcal A}J_E^1
\end{IEEEeqnarray}
by
\[
\sigma_J(e\otimes\overline\partial a\oplus\xi)\ =\
\overline{\sigma}_E(e\otimes\overline\partial a)\oplus\big(\psi_0(\xi)+ \sigma(e\otimes\overline\partial a)\big).
\]
Equivalently, with the identification
\[
J_E^1\otimes_{\mathcal A}\Omega^{0,1}\ \cong\
\bigl(E\otimes_{\mathcal A}\Omega^{0,1}\bigr)\oplus_{\mathbb C}
\bigl(\Omega^{1,0}\otimes_{\mathcal A}E\otimes_{\mathcal A}\Omega^{0,1}\bigr),
\]
we have the following diagram\,:
\[
\begin{tikzcd}[row sep=large, column sep=small]
J_E^1\otimes_{\mathcal{A}}\Omega^{0,1} \arrow[d, "\sigma_J"] & \equiv &
E\otimes_{\mathcal{A}}\Omega^{0,1} \arrow[d, "\overline{\sigma}_E"] \arrow[drr, "\sigma"] &
\oplus_{\mathbb C} &
\Omega^{1,0}\otimes_{\mathcal A}E\otimes_{\mathcal A}\Omega^{0,1} \arrow[d, "\psi_0"]\\
\Omega^{0,1}\otimes_{\mathcal A}J_E^1 & \equiv &
\Omega^{0,1}\otimes_{\mathcal A}E &
\oplus_{\mathbb C} &
\Omega^{0,1}\otimes_{\mathcal A}\Omega^{1,0}\otimes_{\mathcal A}E.
\end{tikzcd}
\]

\begin{lmma}\label{needed in next}
The map $\sigma_J$ in \eqref{sigmaJ} is left $\mathcal A$-linear if and only if
\begin{IEEEeqnarray}{lCl}\label{req1}
\sigma(be\otimes\overline\partial a)
 =\ b\,\sigma(e\otimes\overline\partial a)-\psi_0(\partial b\otimes e\otimes\overline\partial a),
\end{IEEEeqnarray}
and it is right $\mathcal A$-linear if and only if
\begin{IEEEeqnarray}{lCl}\label{req2}
\sigma(e\otimes(\overline\partial a)\,.\,b)\ =\ \sigma(e\otimes\overline\partial a)b
+(\mathrm{id}\otimes\sigma_E)\bigl(\overline{\sigma}_E(e\otimes\overline\partial a)\otimes\partial b\bigr).
\end{IEEEeqnarray}
\end{lmma}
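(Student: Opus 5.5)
The plan is a direct componentwise comparison. I will use the vector-space decomposition $J_E^1\otimes_{\mathcal A}\Omega^{0,1}\cong (E\otimes_{\mathcal A}\Omega^{0,1})\oplus_{\mathbb C}(\Omega^{1,0}\otimes_{\mathcal A}E\otimes_{\mathcal A}\Omega^{0,1})$ together with the analogous decomposition of $\Omega^{0,1}\otimes_{\mathcal A}J_E^1$. On both sides the module actions are twisted: $b\in\mathcal A$ acts from the left through $\partial b\otimes(\cdot)$, and from the right through $\sigma_E(\cdot\otimes\partial b)$. For the left action I will take a general element $e\otimes\overline\partial a\oplus\xi$. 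Here $b\cdot e=(be,\,\partial b\otimes e)$, so
\[
b\cdot(e\otimes\overline\partial a\oplus\xi)=be\otimes\overline\partial a\oplus\bigl(b\xi+\partial b\otimes e\otimes\overline\partial a\bigr).
\]
Applying $\sigma_J$ gives first component $\overline\sigma_E(be\otimes\overline\partial a)=b\,\overline\sigma_E(e\otimes\overline\partial a)$. The second component is $\psi_0(b\xi)+\psi_0(\partial b\otimes e\otimes\overline\partial a)+\sigma(be\otimes\overline\partial a)$. On the other side, $b\cdot\sigma_J(\ldots)$ has the same first component. Its second component is $b\psi_0(\xi)+b\,\sigma(e\otimes\overline\partial a)$, plus whatever the left action on $\Omega^{0,1}\otimes_{\mathcal A}J_E^1$ contributes. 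That action is just $b(\omega\otimes j)=b\omega\otimes j$, so it contributes no $\partial b$ term.

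Since $\psi_0$ is a bimodule map, the $\xi$-terms cancel. Left linearity therefore reduces, for $\xi=0$, exactly to \eqref{req1}. Conversely, \eqref{req1} gives left linearity on all elements by additivity.

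For the right action I will use $(e\otimes\overline\partial a)\cdot b=e\otimes(\overline\partial a)b$, which stays in the first summand. By Hypothesis 1, elements of the second summand $\Omega^{1,0}\otimes_{\mathcal A}E\otimes_{\mathcal A}\Omega^{0,1}$ pick up no correction under right multiplication. Applying $\sigma_J$ to $(e\otimes\overline\partial a\oplus\xi)b$ gives
\[
\overline\sigma_E(e\otimes(\overline\partial a)b)\oplus\bigl(\psi_0(\xi b)+\sigma(e\otimes(\overline\partial a)b)\bigr).
\]
On the target, right multiplication by $b$ on $\Omega^{0,1}\otimes_{\mathcal A}J_E^1$ acts on the $J_E^1$ factor. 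On an $E$-component it produces the extra term $(\mathrm{id}\otimes\sigma_E)(\,\cdot\,\otimes\partial b)$, as in the computation in the proof of \Pref{pp12}. Thus $\sigma_J(\ldots)\,b$ equals
\[
\overline\sigma_E(e\otimes\overline\partial a)b\oplus\Bigl(\psi_0(\xi)b+\sigma(e\otimes\overline\partial a)b+(\mathrm{id}\otimes\sigma_E)\bigl(\overline\sigma_E(e\otimes\overline\partial a)\otimes\partial b\bigr)\Bigr).
\]
The first components agree because $\overline\sigma_E$ is a bimodule map, and the $\xi$-terms agree because $\psi_0$ is one. So right linearity is equivalent to \eqref{req2}.

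The main obstacle is a bookkeeping issue. The element $(\overline\partial a)b$ need not be of the form $\overline\partial a'$, so $\sigma(e\otimes(\overline\partial a)b)$ must be read via the well-definedness of $\sigma$ on all of $E\otimes_{\mathcal A}\Omega^{0,1}$. This is guaranteed by surjectivity of $\overline\partial$ \cite[Proposition~7.2]{DL} and by the extension of $\sigma$ by additivity. With that interpretation fixed, both equivalences are term-by-term identities.
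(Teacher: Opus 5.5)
Your proposal is correct and follows the paper's proof essentially verbatim. Like the paper, you compare the two components of $\sigma_J(b\cdot(e\otimes\overline\partial a\oplus\xi))$ with those of $b\cdot\sigma_J(e\otimes\overline\partial a\oplus\xi)$, and of $\sigma_J((e\otimes\overline\partial a\oplus\xi)\cdot b)$ with those of $\sigma_J(e\otimes\overline\partial a\oplus\xi)\cdot b$, using that $\overline\sigma_E$ and $\psi_0$ are bimodule maps and that the only twisted terms are $\partial b\otimes e\otimes\overline\partial a$ (from the left action on the source) and $(\mathrm{id}\otimes\sigma_E)(\overline\sigma_E(e\otimes\overline\partial a)\otimes\partial b)$ (from the right action on the target); your remark on reading $\sigma(e\otimes(\overline\partial a)b)$ via surjectivity of $\overline\partial$ just makes explicit a point the paper uses without comment.
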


\begin{proof}
For any  $b\,\in\, \mathcal A$, we have
\begin{IEEEeqnarray*}{lCl}
\sigma_J\bigl(b\,.\,(e\otimes\overline\partial a\oplus\xi)\bigr)
&=& \sigma_J\bigl(be\otimes\overline\partial a\oplus b\xi+\partial b\otimes e\otimes\overline\partial a\bigr)\\
&=& \overline{\sigma}_E(be\otimes\overline\partial a)\oplus\psi_0\bigl(b\xi+\partial b\otimes e\otimes\overline\partial a\bigr)+\sigma(be\otimes\overline\partial a)\\
&=& b\,\overline{\sigma}_E(e\otimes\overline\partial a)\oplus b\,\psi_0(\xi)
\oplus \psi_0(\partial b\otimes e\otimes\overline\partial a)\oplus \sigma(be\otimes\overline\partial a),
\end{IEEEeqnarray*}
while
\begin{IEEEeqnarray*}{lCl}
b\,.\,\sigma_J(e\otimes\overline\partial a\oplus\xi)
&=& b\,\overline{\sigma}_E(e\otimes\overline\partial a)\oplus b\,\psi_0(\xi)\oplus b\,\sigma(e\otimes\overline\partial a).
\end{IEEEeqnarray*}
This completes proof of \eqref{req1}.

Similarly,
\begin{IEEEeqnarray*}{lCl}
\sigma_J\bigl((e\otimes\overline\partial a\oplus\xi)\,.\,b\bigr)
&=& \sigma_J\bigl(e\otimes(\overline\partial a)\,.\,b\oplus \xi b\bigr)\\
&=& \overline{\sigma}_E(e\otimes\overline\partial a)b\oplus\psi_0(\xi)b\oplus \sigma(e\otimes(\overline\partial a)\,.\,b),
\end{IEEEeqnarray*}
whereas
\begin{IEEEeqnarray*}{lCl}
\sigma_J(e\otimes\overline\partial a\oplus\xi)\,.\,b
&=& \bigl(\overline{\sigma}_E(e\otimes\overline\partial a)\oplus\psi_0(\xi)\oplus\sigma(e\otimes\overline\partial a)\bigr)\,.\,b\\
&=& \overline{\sigma}_E(e\otimes\overline\partial a)b
\oplus \psi_0(\xi)b
\oplus \sigma(e\otimes\overline\partial a)b
\oplus (\mathrm{id}\otimes\sigma_E)\bigl(\overline{\sigma}_E(e\otimes\overline\partial a)\otimes\partial b\bigr).
\end{IEEEeqnarray*}
This completes proof of \eqref{req2}.
\end{proof}

We impose the following compatibility condition between the maps $\sigma_E$ and $\overline{\sigma}_E\,$:
\begin{IEEEeqnarray}{lCl}\label{22}
\psi_0\bigl(\sigma_E(e\otimes\partial a)\otimes\overline\partial b\bigr)\ =\
(\mathrm{id}\otimes\sigma_E)\bigl(\overline{\sigma}_E(e\otimes\overline\partial a)\otimes\partial b\bigr)
\end{IEEEeqnarray}
for all $e\,\in\, E$ and $a,\,b\,\in\,\mathcal A$, where $\psi_0$ is as defined in \eqref{reqd map}. Equivalently, the following diagram
\[
\begin{tikzcd}
 & E\otimes_{\mathcal{A}}\Omega^{1,0}\otimes_{\mathcal{A}}\Omega^{0,1} \arrow[r, "\sigma_E\otimes\mathrm{id}"] & \Omega^{1,0}\otimes_{\mathcal{A}}E\otimes_{\mathcal{A}}\Omega^{0,1} \arrow[dr, "\psi_0"] & \\
E\otimes\mathcal{A}\otimes\mathcal{A} \arrow[ur, "\mathrm{id}\otimes\partial\otimes\overline\partial"] \arrow[dr, "\mathrm{id}\otimes\overline\partial\otimes\partial" ']  & & & \Omega^{0,1}\otimes_{\mathcal{A}} \Omega^{1,0} \otimes_{\mathcal{A}}E \\
& E\otimes_{\mathcal{A}}\Omega^{0,1}\otimes_{\mathcal{A}}\Omega^{1,0} \arrow[r, "\overline{\sigma}_E\otimes\mathrm{id}"] & \Omega^{0,1}\otimes_{\mathcal{A}}E\otimes_{\mathcal{A}}\Omega^{1,0} \arrow[ur, "\mathrm{id}\otimes\sigma_E" '] &
\end{tikzcd}
\]
must commute. Notice that the compatibility condition is imposed only on the kernel part of the jet sequence, and not on $J_E^1\,$.

\begin{ppsn}
The map $\sigma_J$ in \eqref{sigmaJ} is a homomorphism of left $\mathcal{A}$-modules. Moreover, it is a right $\mathcal{A}$-module map if and only if \eqref{22} holds.
\end{ppsn}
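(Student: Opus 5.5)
By \Lref{needed in next}, the statement reduces to two verifications. First, I will check the identity \eqref{req1} for all $b$, $e$, $a$. Second, I will show that \eqref{req2} is equivalent to \eqref{22}. In both cases I work directly with the defining formula for $\sigma$:
\[
\sigma(e\otimes\overline\partial a)=(\mu^{-1}\circ\partial\otimes\mathrm{id}_E)\bigl(\overline{\sigma}_E(e\otimes\overline\partial a)\bigr)+\widetilde D_E\bigl(\sigma_E(e\otimes\partial a)\bigr)-(\mathrm{id}\otimes\sigma_E)\bigl(\overline{\nabla}_E(e)\otimes\partial a\bigr).
\]
It is convenient to apply $\mu\otimes\mathrm{id}_E$ throughout, so that every identity lives in $\Omega^{1,1}\otimes_{\mathcal A}E$. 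This is harmless because $\mu$ is a bimodule isomorphism.

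For left linearity, replace $e$ by $be$ and expand the three terms separately.
\begin{itemize}
\item Since $\overline{\sigma}_E$ and $\sigma_E$ are left $\mathcal A$-linear, the first term becomes $(\partial\otimes\mathrm{id})(b\,\overline\sigma_E(e\otimes\overline\partial a))$. This equals $b$ times the original term plus $\partial b\wedge\overline\sigma_E(e\otimes\overline\partial a)$.
\item The second term is $\widetilde D_E(b\,\sigma_E(e\otimes\partial a))$. By the left Leibniz rule for $\widetilde D_E$ from the proof of \Pref{holomorphic structure}, it equals $b\,\widetilde D_E(\cdots)+\overline\partial b\otimes\sigma_E(e\otimes\partial a)$.
\item The third term involves $\overline\nabla_E(be)=b\overline\nabla_E(e)+\overline\partial b\otimes e$. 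This produces $b$ times the old term minus $\overline\partial b\otimes\sigma_E(e\otimes\partial a)$.
\end{itemize}
The two $\overline\partial b$ contributions cancel. What remains beyond $b\,\sigma(e\otimes\overline\partial a)$ is $\partial b\wedge\overline\sigma_E(e\otimes\overline\partial a)$ in $\Omega^{1,1}\otimes E$. By the definition \eqref{reqd map}, this is exactly $-(\mu\otimes\mathrm{id})\psi_0(\partial b\otimes e\otimes\overline\partial a)$. Hence \eqref{req1} holds unconditionally.

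One subtlety needs checking: $\sigma$ is defined only on elements of the form $e\otimes\overline\partial a$. I will use surjectivity of $\overline\partial$ and the fact that $\ker\overline\partial=\ker\partial=\mathbb C$, so that the verification on these generators suffices. Each step above is linear in such expressions, so this causes no trouble.

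For right linearity, the plan is to write $(\overline\partial a)b=\overline\partial(ab)-a\,\overline\partial b$. Then $\sigma(e\otimes(\overline\partial a)b)=\sigma(e\otimes\overline\partial(ab))-\sigma(ea\otimes\overline\partial b)$, using that $e\otimes a\overline\partial b=ea\otimes\overline\partial b$.
\begin{itemize}
\item The first piece is expanded with $\partial(ab)=(\partial a)b+a\partial b$.
\item The bimodule Leibniz rules give $\overline\sigma_E(e\otimes\overline\partial a)b$ and $\sigma_E(e\otimes\partial a)b$.
\item Lemma~\ref{to be used here} gives $\widetilde D_E(\sigma_E(e\otimes\partial a)\,b)=\widetilde D_E(\sigma_E(e\otimes\partial a))b+\psi_0(\sigma_E(e\otimes\partial a)\otimes\overline\partial b)$.
\item The right Leibniz rule for $\overline\nabla_E$ handles $\overline\nabla_E(ea)$.
\item The graded Leibniz rule for $\partial$ handles $\partial(\eta b)$ with $\eta\in\Omega^{0,1}$.
\end{itemize}
After collecting terms, the claim is that everything not assembling into $\sigma(e\otimes\overline\partial a)b$ cancels except two terms. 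These are $\psi_0(\sigma_E(e\otimes\partial a)\otimes\overline\partial b)$, coming from Lemma~\ref{to be used here}, and a term $-\,\overline\sigma_E(e\otimes\overline\partial a)\wedge\partial b$ (after $\mu^{-1}$), coming from $\partial(\eta b)=(\partial\eta)b-\eta\wedge\partial b$. The latter is identified with $-(\mathrm{id}\otimes\sigma_E)(\overline\sigma_E(e\otimes\overline\partial a)\otimes\partial b)$ via the identification of $\Omega^{0,1}\otimes\Omega^{1,0}$ with $\Omega^{1,1}$. Comparing with \eqref{req2} then shows that right linearity holds if and only if \eqref{22} holds.

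The main obstacle is this right-linearity bookkeeping. Terms such as $\partial\overline\partial a$ and $\overline\partial a\wedge\partial b$ must be matched against the cross terms arising from $\sigma(ea\otimes\overline\partial b)$. I also need to ensure that the leftover $(\mathrm{id}\otimes\sigma_E)$-term appears with the coefficient that makes the condition exactly \eqref{22}, rather than a sign-twisted variant. I would first carry this out in the case $b\in\mathcal A$ with $\overline\partial b$ or $\partial b$ vanishing, as a consistency check on the signs, before doing the general expansion.
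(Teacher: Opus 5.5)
Your left-linearity computation is the paper's own, term for term, and is correct. The gap is in the right-linearity part, which is also where you leave the paper's route. First, your list of surviving terms is wrong. In $-\sigma(ea\otimes\overline\partial b)$ the third summand contributes $+(\mathrm{id}\otimes\sigma_E)\bigl(\overline\nabla_E(ea)\otimes\partial b\bigr)=(\mathrm{id}\otimes\sigma_E)\bigl(\overline\nabla_E(e)a\otimes\partial b\bigr)+X$, where $X:=(\mathrm{id}\otimes\sigma_E)\bigl(\overline{\sigma}_E(e\otimes\overline\partial a)\otimes\partial b\bigr)$. The first piece cancels the $a\,\partial b$ part of the third summand of $\sigma(e\otimes\overline\partial(ab))$, but $+X$ survives, and you have dropped it. Even taking your list at face value, comparing the leftover $\psi_0\bigl(\sigma_E(e\otimes\partial a)\otimes\overline\partial b\bigr)-X$ with \eqref{req2} gives $\psi_0\bigl(\sigma_E(e\otimes\partial a)\otimes\overline\partial b\bigr)=2X$, not \eqref{22}. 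So your final sentence does not follow from what precedes it. Your planned sign check with $\partial b=0$ or $\overline\partial b=0$ cannot catch this, since on $\mathbb{CP}_q^1$ either condition forces $b\in\mathbb C$.

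Second, once $+X$ is restored it cancels your $-X$, and you land on the paper's identity $\sigma(e\otimes(\overline\partial a)b)=\sigma(e\otimes\overline\partial a)b+\psi_0\bigl(\sigma_E(e\otimes\partial a)\otimes\overline\partial b\bigr)$. Everything therefore hinges on the $-X$ you extract from ``$\partial(\eta b)$'', and the definitions do not supply that term. The right action on $\Omega^{0,1}\otimes_{\mathcal A}E$ is on the $E$ factor, and the paper evaluates $(\partial\otimes\mathrm{id}_E)(Yb)=\bigl((\partial\otimes\mathrm{id}_E)Y\bigr)b$ throughout: in this proof, and for $\beta_E(eb)$ in the subsequent theorem. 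Your $-X$ is instead the right Leibniz term of $\eta\otimes f\mapsto\partial\eta\otimes f-\eta\wedge\nabla_E f$ for a $\sigma_E$-bimodule $\partial$-connection $\nabla_E$, which is not assumed here. Under the paper's rule your decomposition yields $\sigma(e\otimes\overline\partial a)b+\psi_0\bigl(\sigma_E(e\otimes\partial a)\otimes\overline\partial b\bigr)+X$, i.e.\ the condition $\psi_0\bigl(\sigma_E(e\otimes\partial a)\otimes\overline\partial b\bigr)=0$ instead of \eqref{22}. The root cause is that $\partial\otimes\mathrm{id}_E$ is not balanced over $\mathcal A$, since $\partial(\eta c)\otimes f-\partial\eta\otimes cf=-\eta\wedge\partial c\otimes f$. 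The remark $\ker\overline\partial=\ker\partial=\mathbb C$ only makes $\sigma$ independent of the choice of $a$. It does not make $\sigma$ respect $ea\otimes\overline\partial b=e\otimes a\,\overline\partial b$, which is exactly what your splitting $\sigma(e\otimes(\overline\partial a)b)=\sigma(e\otimes\overline\partial(ab))-\sigma(ea\otimes\overline\partial b)$ uses. The paper never decomposes $(\overline\partial a)b$. It evaluates the defining formula of $\sigma$ at $(\overline\partial a)b$ with $(\partial a)b$ as the $(1,0)$-partner, so the first and third summands simply pick up a right factor $b$. Then \Lref{to be used here}, applied to $\widetilde D_E\bigl(\sigma_E(e\otimes\partial a)\,b\bigr)$, produces the single extra term $\psi_0\bigl(\sigma_E(e\otimes\partial a)\otimes\overline\partial b\bigr)$, after which \eqref{req2} is literally \eqref{22}. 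To reach the stated equivalence consistently with the paper's conventions, follow that direct evaluation, and say explicitly that this is how $\sigma$ is evaluated off the elements $e\otimes\overline\partial a$.
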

\begin{proof}
We use \Lref{needed in next} to prove this. For any $b\,\in\,\mathcal{A}$, we have
\begin{IEEEeqnarray*}{lCl}
& & \sigma(be\otimes\overline\partial a)\\
&=& (\mu^{-1}\circ\partial\otimes\mathrm{id}_E)\circ\overline{\sigma}_E(be\otimes\overline\partial a)
+\widetilde D_E\bigl(\sigma_E(be\otimes\partial a)\bigr)
-(\mathrm{id}\otimes\sigma_E)\bigl(\overline{\nabla}_E(be)\otimes\partial a\bigr)\\
&=& (\mu^{-1}\circ\partial\otimes\mathrm{id}_E)\bigl(b\,\overline{\sigma}_E(e\otimes\overline\partial a)\bigr)
+\widetilde D_E\bigl(b\,\sigma_E(e\otimes\partial a)\bigr)
-b\,(\mathrm{id}\otimes\sigma_E)\bigl(\overline{\nabla}_E(e)\otimes\partial a\bigr)\\
& & -(\mathrm{id}\otimes\sigma_E)\bigl(\overline\partial b\otimes e\otimes\partial a\bigr)\\
&=& -b\,(\mathrm{id}\otimes\sigma_E)\bigl(\overline{\nabla}_E(e)\otimes\partial a\bigr)
-\overline\partial b\otimes \sigma_E(e\otimes\partial a)
+(\mu^{-1}\otimes\mathrm{id}_E)\circ D_E\bigl(b\,\sigma_E(e\otimes\partial a)\bigr)\\
& & +(\mu^{-1}\otimes\mathrm{id}_E)\Bigl(\partial b\wedge\overline{\sigma}_E(e\otimes\overline\partial a)
+b\,(\partial\otimes\mathrm{id})\bigl(\overline{\sigma}_E(e\otimes\overline\partial a)\bigr)\Bigr)\\
&=& -b\,(\mathrm{id}\otimes\sigma_E)\bigl(\overline{\nabla}_E(e)\otimes\partial a\bigr)
-\overline\partial b\otimes \sigma_E(e\otimes\partial a)
+(\mu^{-1}\otimes\mathrm{id}_E)\bigl(\partial b\wedge\overline{\sigma}_E(e\otimes\overline\partial a)\bigr)\\
& & +b\,(\mu^{-1}\otimes\mathrm{id}_E)\circ(\partial\otimes\mathrm{id})\bigl(\overline{\sigma}_E(e\otimes\overline\partial a)\bigr)
+b\,(\mu^{-1}\otimes\mathrm{id}_E)\circ D_E\bigl(\sigma_E(e\otimes\partial a)\bigr)\\
& & +(\mu^{-1}\otimes\mathrm{id}_E)\bigl(\overline\partial b\wedge\sigma_E(e\otimes\partial a)\bigr)\\
&=& -b\,(\mathrm{id}\otimes\sigma_E)\bigl(\overline{\nabla}_E(e)\otimes\partial a\bigr)
-\overline\partial b\otimes \sigma_E(e\otimes\partial a)
+(\mu^{-1}\otimes\mathrm{id}_E)\bigl(\partial b\wedge\overline{\sigma}_E(e\otimes\overline\partial a)\bigr)\\
& & +b\,(\mu^{-1}\otimes\mathrm{id}_E)\circ(\partial\otimes\mathrm{id})\bigl(\overline{\sigma}_E(e\otimes\overline\partial a)\bigr)\\
& & +b\,(\mu^{-1}\otimes\mathrm{id}_E)\Bigl((\overline\partial\otimes\mathrm{id}_E)\bigl(\sigma_E(e\otimes\partial a)\bigr)
-(\wedge\otimes\mathrm{id}_E)\circ(\mathrm{id}_{\Omega^{1,0}}\otimes\overline{\nabla}_E)\bigl(\sigma_E(e\otimes\partial a)\bigr)\Bigr)\\
& & +(\mu^{-1}\otimes\mathrm{id}_E)\bigl(\overline\partial b\wedge\sigma_E(e\otimes\partial a)\bigr)\\
&=& b\,\sigma(e\otimes\overline\partial a)
+(\mu^{-1}\otimes\mathrm{id}_E)\bigl(\overline\partial b\wedge\sigma_E(e\otimes\partial a)\bigr)
-\overline\partial b\otimes \sigma_E(e\otimes\partial a)\\
& & +(\mu^{-1}\otimes\mathrm{id}_E)\bigl(\partial b\wedge\overline{\sigma}_E(e\otimes\overline\partial a)\bigr)\\
&=& b\,\sigma(e\otimes\overline\partial a)-\psi_0(\partial b\otimes e\otimes\overline\partial a).
\end{IEEEeqnarray*}
Thus \eqref{req1} holds, and consequently, the map $\sigma_J$ in \eqref{sigmaJ} is a left $\mathcal{A}$-module map. Next, by \Cref{to be used here}, we have the following
\begin{IEEEeqnarray*}{lCl}
\sigma(e\otimes(\overline\partial a)\,.\,b)
&=&(\mu^{-1}\otimes\mathrm{id}_E)\circ(\partial\otimes\mathrm{id})\circ\overline{\sigma}_E\bigl(e\otimes(\overline\partial a)\,.\,b\bigr)\\
& & +\widetilde D_E\circ\sigma_E\bigl(e\otimes(\partial a)\,.\,b\bigr)
-(\mathrm{id}\otimes\sigma_E)\bigl(\overline{\nabla}_E(e)\otimes(\overline\partial a)\,.\,b\bigr)\\
&=& (\mu^{-1}\otimes\mathrm{id}_E)\circ(\partial\otimes\mathrm{id})\bigl(\overline{\sigma}_E(e\otimes\overline\partial a)\bigr)b
-(\mathrm{id}\otimes\sigma_E)\bigl(\overline{\nabla}_E(e)\otimes\overline\partial a\bigr)b\\
& & +\widetilde D_E\circ\sigma_E\bigl(e\otimes(\partial a)\,.\,b\bigr)\\
&=& \sigma(e\otimes\overline\partial a)b
+\widetilde D_E\bigl(\sigma_E(e\otimes(\partial a)\,.\,b)\bigr)
-\widetilde D_E\bigl(\sigma_E(e\otimes\partial a)\bigr)b\\
&=& \sigma(e\otimes\overline\partial a)b
+\psi_0\bigl(\sigma_E(e\otimes\partial a)\otimes\overline\partial b\bigr).
\end{IEEEeqnarray*}
Thus, \eqref{req2} holds, and consequently, the map $\sigma_J$ in \eqref{sigmaJ} is a right $\mathcal{A}$-module map if and only if the compatibility condition in \eqref{22} holds.
\end{proof}

\begin{thm}
Let $(E,\,\overline{\nabla}_E)$ be a holomorphic vector bundle over the quantum projective line. Assume that $E$ is a bimodule and the holomorphic structure $\overline{\nabla}_E$ on $E$ is a $\overline{\sigma}_E$-bimodule $\overline\partial$-connection. Suppose that the jet bundle $J_E^1$ is a bimodule, i.e.,\ there exists a bimodule map
\[
\sigma_E\ :\ E\otimes_{\mathcal{A}}\Omega^{1,0}\ \longrightarrow\
\Omega^{1,0}\otimes_{\mathcal{A}}E.
\]
If $\sigma_E$ and $\overline{\sigma}_E$ satisfy the compatibility condition in \eqref{22}, 
then the holomorphic structure $\overline{\nabla}_J$ on $J_E^1$ is also a bimodule connection.
\end{thm}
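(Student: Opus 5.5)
The plan is to show that $\overline{\nabla}_J$ satisfies the right twisted Leibniz rule with respect to the map $\sigma_J$ of \eqref{sigmaJ}. Since \eqref{22} is assumed, the preceding Proposition already makes $\sigma_J$ an $\mathcal{A}$-bimodule map, so what remains is the identity
\[
\overline{\nabla}_J\bigl((e,\eta)\cdot a\bigr)\ =\ \overline{\nabla}_J(e,\eta)\cdot a+\sigma_J\bigl((e,\eta)\otimes\overline\partial a\bigr)
\]
for all $(e,\eta)\in J_E^1$ and $a\in\mathcal{A}$. Here the right action on $J_E^1$ is $(e,\eta)\cdot a=(ea,\ \sigma_E(e\otimes\partial a)+\eta a)$. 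The right action on $\Omega^{0,1}\otimes_{\mathcal A}J_E^1$ is the induced one, so it contributes the extra term $(\mathrm{id}\otimes\sigma_E)(\,\cdot\,\otimes\partial a)$ on the $\Omega^{0,1}\otimes E$ part, as used in the proof of \Lref{needed in next}. Both sides are additive in $(e,\eta)$, so I will treat the pieces $(0,\eta)$ and $(e,0)$ separately.

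\emph{Kernel part $(0,\eta)$.} Here the left side is $\widetilde D_E(\eta a)$. Take $\eta=\xi\otimes f$ and compute exactly as in \Lref{to be used here}, using that $\overline\partial$ on $\Omega^{1,0}$ satisfies the right Leibniz rule $\overline\partial(\xi a)=\overline\partial\xi\,a-\xi\wedge\overline\partial a$, and that $\overline{\nabla}_E$ is a right $\overline\sigma_E$-connection. For the term $\xi\otimes fa$ this gives
\[
\widetilde D_E(\eta a)=\widetilde D_E(\eta)a+\psi_0(\eta\otimes\overline\partial a).
\]
This is the argument of \Lref{to be used here} applied to an arbitrary element rather than to one in the image of $\sigma_E$, and it matches the $\psi_0$ component of $\sigma_J$.

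\emph{Top part $(e,0)$.} The left side is $\bigl(\overline\nabla_E(ea),\ \widetilde\beta_E(ea)+\widetilde D_E(\sigma_E(e\otimes\partial a))\bigr)$. Expand the first component as $\overline\nabla_E(e)a+\overline\sigma_E(e\otimes\overline\partial a)$; this matches the $\overline\sigma_E$ component of $\sigma_J$ plus the first component of $\overline\nabla_J(e,0)\cdot a$. When we multiply $\overline\nabla_J(e,0)$ on the right by $a$, the piece $\overline\nabla_E(e)\in\Omega^{0,1}\otimes E$ produces the extra kernel term $(\mathrm{id}\otimes\sigma_E)(\overline\nabla_E(e)\otimes\partial a)$. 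It therefore suffices to show
\[
\widetilde\beta_E(ea)=\widetilde\beta_E(e)a+(\mu^{-1}\circ\partial\otimes\mathrm{id}_E)\bigl(\overline\sigma_E(e\otimes\overline\partial a)\bigr).
\]
To prove it, expand
\[
\beta_E(ea)=(\partial\otimes\mathrm{id})\bigl(\overline\nabla_E(e)a+\overline\sigma_E(e\otimes\overline\partial a)\bigr),
\]
and handle $(\partial\otimes\mathrm{id})(\overline\nabla_E(e)a)$ with the right Leibniz rule for $\partial$ on $\Omega^{0,1}\otimes_{\mathcal A}E$ twisted by $\sigma_E$. This rule is exactly why the term $-(\mathrm{id}\otimes\sigma_E)(\overline\nabla_E(e)\otimes\partial a)$ was built into $\sigma$; after applying $\mu^{-1}$, the terms should cancel against the definition of $\sigma(e\otimes\overline\partial a)$. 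Once this holds, the remaining terms $\widetilde D_E(\sigma_E(e\otimes\partial a))$ and $(\mu^{-1}\partial\otimes\mathrm{id})\overline\sigma_E(e\otimes\overline\partial a)$ assemble into $\sigma(e\otimes\overline\partial a)$, as required.

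\emph{Main obstacle.} The hard step is the $\beta_E$ computation. The operator $\partial\otimes\mathrm{id}_E$ on $\Omega^{0,1}\otimes_{\mathcal A}E$ is not well defined by itself; it only makes sense through the use already made in $\beta_E$, or by expressing $\overline\nabla_E(e)$ via $\overline\partial$ and projective-module coordinates. In the $\mathbb{CP}_q^1$ case I would sidestep this by writing elements of $\Omega^{0,1}$ as $\overline\partial a$, which is allowed since $\overline\partial$ is surjective \cite[Proposition~7.2]{DL}, and by interpreting the $\partial$-term through $\partial\overline\partial a$ together with the twisted rule for $\sigma_E$. I would then check the cancellation term by term, relying on the well-definedness of $\sigma$ already established before \eqref{sigmaJ}. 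Collecting both parts yields the twisted right Leibniz rule, so $\overline\nabla_J$ is a $\sigma_J$-bimodule connection.
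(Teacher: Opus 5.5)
Your outline matches the paper's. The first components agree by the right $\overline\sigma_E$-Leibniz rule, and the kernel part gives $\widetilde D_E(\eta a)=\widetilde D_E(\eta)a+\psi_0(\eta\otimes\overline\partial a)$. The top part correctly reduces to
\[
\widetilde\beta_E(ea)\ =\ \widetilde\beta_E(e)a+(\mu^{-1}\circ\partial\otimes\mathrm{id}_E)\bigl(\overline\sigma_E(e\otimes\overline\partial a)\bigr).
\]
The gap is in how you propose to prove this identity.

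Your reduction already spent the summand $-(\mathrm{id}\otimes\sigma_E)(\overline\nabla_E(e)\otimes\partial a)$ of $\sigma(e\otimes\overline\partial a)$, cancelling it against the term produced by the right action on $\Omega^{0,1}\otimes_{\mathcal A}J_E^1$. You then want a $\sigma_E$-twisted right Leibniz rule for $\partial\otimes\mathrm{id}_E$ whose twist cancels against that same summand. It cannot be used twice, so any such twist would survive and the Leibniz identity would fail by exactly that amount. Your last sentence has the matching slip: $\widetilde D_E(\sigma_E(e\otimes\partial a))+(\mu^{-1}\circ\partial\otimes\mathrm{id}_E)\overline\sigma_E(e\otimes\overline\partial a)$ equals $\sigma(e\otimes\overline\partial a)+(\mathrm{id}\otimes\sigma_E)(\overline\nabla_E(e)\otimes\partial a)$, not $\sigma(e\otimes\overline\partial a)$. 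The extra piece is what matches the right-action term.

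What is needed, and what the paper uses, is the \emph{untwisted} identity $(\partial\otimes\mathrm{id}_E)(\overline\nabla_E(e)a)=\beta_E(e)a$. With $\overline\nabla_E(e)=\sum_i\omega_i\otimes e_i$ one has $\overline\nabla_E(e)a=\sum_i\omega_i\otimes e_ia$, since right multiplication only touches the $E$-factor. Hence $\beta_E(ea)=\beta_E(e)a+(\partial\otimes\mathrm{id}_E)\overline\sigma_E(e\otimes\overline\partial a)$ is immediate. A $\sigma_E$-twisted rule would come from the covariant extension $\omega\otimes f\mapsto\partial\omega\otimes f-\omega\wedge\nabla_E f$ of a bimodule $\partial$-connection $\nabla_E$, and none is assumed here.

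Your worry that $\partial\otimes\mathrm{id}_E$ is not defined on $\Omega^{0,1}\otimes_{\mathcal A}E$ is legitimate; the paper computes on representatives without comment. Surjectivity of $\overline\partial$ does not fix it, because presentations $\sum_i\overline\partial a_i\otimes e_i$ are not unique. The right observation is that only the combination entering $\overline\nabla_J(e,0)$ has to be well defined, and it is.

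Let $i$ be the inclusion of the kernel and set $\Lambda(\omega\otimes f):=\omega\otimes(f,0)+(\mathrm{id}\otimes i)\mu^{-1}(\partial\omega\otimes f)$. This is $\mathcal A$-balanced. The term $\omega\otimes(0,\partial a\otimes f)$ in $\omega a\otimes(f,0)=\omega\otimes(af,\partial a\otimes f)$ exactly compensates the term $-\omega\wedge\partial a$ in $\partial(\omega a)=(\partial\omega)a-\omega\wedge\partial a$.

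Consequently:
\begin{itemize}
\item $\overline\nabla_J(e,\eta)=\Lambda(\overline\nabla_E e)+(\mathrm{id}\otimes i)\widetilde D_E(\eta)$, so computing with a fixed representative, used consistently in both components, is justified.
\item $\Lambda(X)a=\Lambda(Xa)+(\mathrm{id}\otimes i)(\mathrm{id}\otimes\sigma_E)(X\otimes\partial a)$.
\end{itemize}
This is the only $\sigma_E$-correction in the whole computation, namely the right-action term you already accounted for. With it, the top part closes at once against the definition of $\sigma_J$.
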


\begin{proof}
It was already shown that $\sigma_J$ is a bimodule map. We will show that the holomorphic structure $\overline{\nabla}_J$ from \Pref{holomorphic structure} is a right $\sigma_J$-connection on $J_E^1$, i.e.,
\begin{IEEEeqnarray}{lCl}\label{req3}
\overline{\nabla}_J\bigl((e\oplus\xi)\,.\,b\bigr)\ =\
\overline{\nabla}_J(e\oplus\xi)\,.\,b+\sigma_J\bigl((e\oplus\xi)\otimes\overline\partial b\bigr)
\end{IEEEeqnarray}
for all $b\,\in\,\mathcal A$, $e\,\in\, E$ and $\xi\,\in\,\Omega^{1,0}\otimes_{\mathcal A}E$. By definition of $\overline{\nabla}_J$,
\begin{IEEEeqnarray*}{lCl}
\overline{\nabla}_J\bigl((e\oplus\xi)\,.\,b\bigr)
&=& \overline{\nabla}_J\bigl(eb\oplus \xi b+\sigma_E(e\otimes\partial b)\bigr)\\
&=& \overline{\nabla}_E(eb)\oplus \widetilde{\beta}_E(eb)
\oplus \widetilde D_E(\xi b)
\oplus \widetilde D_E\bigl(\sigma_E(e\otimes\partial b)\bigr).
\end{IEEEeqnarray*}
On the other hand,
\begin{IEEEeqnarray*}{lCl}
& & \overline{\nabla}_J(e\oplus\xi)\,.\,b+\sigma_J\bigl((e\oplus\xi)\otimes\overline\partial b\bigr)\\
&=& \overline{\nabla}_J(e\oplus\xi)\,.\,b+\sigma_J(e\otimes\overline\partial b\oplus \xi\otimes\overline\partial b)\\
&=& \Bigl\{\bigl(\overline{\nabla}_E(e)\oplus \widetilde{\beta}_E(e)\oplus \widetilde D_E(\xi)\bigr)\,.\,b\Bigr\}
+\Bigl\{\overline{\sigma}_E(e\otimes\overline\partial b)\oplus \psi_0(\xi\otimes\overline\partial b)\oplus \sigma(e\otimes\overline\partial b)\Bigr\}\\
&=& \overline{\nabla}_E(e)b+\overline{\sigma}_E(e\otimes\overline\partial b)\\
& & \oplus\,(\mathrm{id}\otimes\sigma_E)\bigl(\overline{\nabla}_E(e)\otimes\partial b\bigr)
\oplus \widetilde{\beta}_E(e)b
\oplus \widetilde D_E(\xi)b
\oplus \psi_0(\xi\otimes\overline\partial b)
\oplus \sigma(e\otimes\overline\partial b).
\end{IEEEeqnarray*}
Since $\overline{\nabla}_E$ is a right $\overline{\sigma}_E$-connection,
\[
\overline{\nabla}_E(eb)\ =\ \overline{\nabla}_E(e)b+\overline{\sigma}_E(e\otimes\overline\partial b).
\]
Thus, the first component on both sides of \eqref{req3} agree.

For the second component we compute $\beta_E(eb)$ and $D_E(\xi b)$. First,
\begin{IEEEeqnarray*}{lCl}
\beta_E(eb)
&=& (\partial\otimes\mathrm{id}_E)\bigl(\overline{\nabla}_E(eb)\bigr)\\
&=& (\partial\otimes\mathrm{id}_E)\bigl(\overline{\nabla}_E(e)b+\overline{\sigma}_E(e\otimes\overline\partial b)\bigr)\\
&=& \beta_E(e)b+(\partial\otimes\mathrm{id}_E)\bigl(\overline{\sigma}_E(e\otimes\overline\partial b)\bigr).
\end{IEEEeqnarray*}
Next, writing $\xi\,=\,\sum_j \omega_j\otimes e_j$ with $\omega_j\,\in\,\Omega^{1,0}$, we obtain the following:
\begin{IEEEeqnarray*}{lCl}
D_E(\xi b)
&=& (\overline\partial\otimes\mathrm{id}_E)(\xi b)
-(\wedge\otimes\mathrm{id}_E)(\mathrm{id}_{\Omega^{1,0}}\otimes\overline{\nabla}_E)(\xi b)\\
&=& (\overline\partial\otimes\mathrm{id}_E)(\xi b)
-(\wedge\otimes\mathrm{id}_E)\Bigl(\sum_j \omega_j\otimes\overline{\nabla}_E(e_j)b+\omega_j\otimes\overline{\sigma}_E(e_j\otimes\overline\partial b)\Bigr)\\
&=& \sum_j \overline\partial(\omega_j)\otimes e_j b
-\Bigl(\sum_j \omega_j\wedge\overline{\nabla}_E(e_j)b+\omega_j\wedge\overline{\sigma}_E(e_j\otimes\overline\partial b)\Bigr)\\
&=& D_E(\xi)b-(\wedge\otimes\mathrm{id}_E)\circ(\mathrm{id}_{\Omega^{1,0}}\otimes\overline{\sigma}_E)(\xi\otimes\overline\partial b).
\end{IEEEeqnarray*}
Therefore, the second component of $\overline{\nabla}_J\bigl((e\oplus\xi)\,.\,b\bigr)$ is
\begin{IEEEeqnarray*}{lCl}
& & \widetilde{\beta}_E(eb)+\widetilde D_E(\xi b)+\widetilde D_E\bigl(\sigma_E(e\otimes\partial b)\bigr)\\
&=& \widetilde{\beta}_E(e)b
+(\mu^{-1}\circ\partial\otimes\mathrm{id}_E)\bigl(\overline{\sigma}_E(e\otimes\overline\partial b)\bigr)
+\widetilde D_E(\xi)b\\
& & -(\mu^{-1}\circ\wedge\otimes\mathrm{id}_E)\circ(\mathrm{id}_{\Omega^{1,0}}\otimes\overline{\sigma}_E)(\xi\otimes\overline\partial b)
+\widetilde D_E\bigl(\sigma_E(e\otimes\partial b)\bigr)\\
&=& \widetilde{\beta}_E(e)b+\widetilde D_E(\xi)b+\psi_0(\xi\otimes\overline\partial b)
+\sigma(e\otimes\overline\partial b)
+(\mathrm{id}\otimes\sigma_E)\bigl(\overline{\nabla}_E(e)\otimes\partial b\bigr),
\end{IEEEeqnarray*}
which is exactly the second component of
\[
\overline{\nabla}_J(e\oplus\xi)\,.\,b+\sigma_J\bigl((e\oplus\xi)\otimes\overline\partial b\bigr).
\]
Hence \eqref{req3} holds, and the proof is complete.
\end{proof}


\subsection{Holomorphic connection on line bundles over the quantum projective line}\label{Sec6.1}

We present an application of \Tref{thm:splitimpliesconnectionbothway}.
\smallskip

Let $\mathcal{O}(n)$ denote the line bundle of degree $n$ over the complex projective line $\mathbb{CP}^1$. A classical result of Atiyah says that holomorphic connection exists on $\mathcal{O}(n)$ if and only if $n=0$. Consider the holomorphic line bundles $\mathcal{L}_n$ over $\mathcal{A}(\mathbb{CP}_q^1)$, the coordinate algebra of the quantum projective line $\mathbb{CP}_q^1$, equipped with the \emph{standard} holomorphic structures $\overline{\nabla}^{(n)}_0$ obtained in \cite{KLvS11}. The word ``standard" is chosen purposefully here as it has been shown recently that there are more than one (in fact, countably many) holomorphic structures (up to gauge equivalence) on each line bundle $\mathcal{L}_n$ \cite{GLRM25}. This is a genuine noncommutative phenomenon with no classical analogue. Such holomorphic structures are referred to as \emph{non-standard holomorphic structures}, and these are intimately related with the existence of fixed points of certain nonlinear maps acting on an appropriate Banach space \cite{BGK26}.

We have already seen in \Cref{Sec4.1} that the $(1,0)$-connection $$\nabla^{\partial}(\cdot):=q^{-n-2}\,\omega_+(X_+\triangleright(\cdot))$$ on $\mathcal{L}_n$ constructed in \cite[Eqn. 3.30]{KLvS11} is a holomorphic connection only on $\mathcal{L}_0\,$. However, this does not rule out the possibility of the existence of a holomorphic connection on $\mathcal{L}_n$ for $n\neq 0$. To penetrate deeper, we need the notion of $SU_q(2)$-invariant connections. We briefly recall it here from \cite{Lan12}.

The coproduct $\Delta\,:\,\mathcal{A}(SU_q(2))\,\longrightarrow\,\mathcal{A}(SU_q(2))\otimes\mathcal{A}(SU_q(2))$ induces a left coaction on $\mathcal{A}(\mathbb{CP}_q^1):$
\[
\Delta_L\ :\ \mathcal{A}(\mathbb{CP}_q^1)\ \longrightarrow\ \mathcal{A}(SU_q(2))\otimes\mathcal{A}(\mathbb{CP}_q^1),
\]
as well as on line bundles:
\[
\Delta_{(n)}\ :\ \mathcal{L}_n\ \longrightarrow\ \mathcal{A}(SU_q(2))\otimes\mathcal{L}_n
\]
over $\mathbb{CP}_q^1$. The left coaction $\Delta_L$ extends to one forms
\[
\Delta_L^{(1)}\ :\ \Omega^1\ \longrightarrow\ \mathcal{A}(SU_q(2))\otimes\Omega^1,
\]
making the calculus left $SU_q(2)$-covariant. We then have the lift to cotangent-valued left coaction:
\[
\Delta_{(n)}^{(1)}\ :\ \Omega^1\otimes_{\mathcal{A}(\mathbb{CP}_q^1)}\mathcal{L}_n\ \longrightarrow\
\mathcal{A}(SU_q(2))\otimes(\Omega^1\otimes_{\mathcal{A}(\mathbb{CP}_q^1)}\mathcal{L}_n)
\]
given by
\[
\Delta_{(n)}^{(1)}(\omega\otimes\xi)\ :=\ \Delta_L^{(1)}(\omega)_{(-1)}\Delta_{(n)}(\xi)_{(-1)}\otimes\Delta_L^{(1)}(\omega)_{(0)}\otimes\Delta_{(n)}(\xi)_{(0)}.
\]

\begin{dfn}
A left connection $\nabla\,:\,\mathcal{L}_n\,\longrightarrow\,\Omega^1\otimes_{\mathcal{A}(\mathbb{CP}_q^1)}\mathcal{L}_n$ is called $SU_q(2)$-invariant if the following diagram
\[
\begin{tikzcd}
\mathcal{L}_n \arrow[d, "\Delta_{(n)}" '] \arrow[r, "\nabla"] & \Omega^1\otimes_{\mathcal{A}(\mathbb{CP}_q^1)}\mathcal{L}_n \arrow[d, "\Delta_{(n)}^{(1)}"]\\
\mathcal{A}(SU_q(2))\otimes\mathcal{L}_n \arrow[r, "\mathrm{id}\otimes\nabla"] & \mathcal{A}(SU_q(2))\otimes\big(\Omega^1\otimes_{\mathcal{A}(\mathbb{CP}_q^1)}\mathcal{L}_n\big)
\end{tikzcd}
\]
commutes.
\end{dfn}

On each line bundle $\mathcal{L}_n$, there is an $\mathcal{A}(\mathbb{CP}_q^1)$-valued Hermitian structure
\[
\langle\,.\,,\,.\rangle_n\ :\ \mathcal{L}_n\times\mathcal{L}_n\ \longrightarrow\ \mathcal{A}(\mathbb{CP}_q^1)
\]
using the isomorphism $\mathcal{L}_n\otimes_{\mathcal{A}(\mathbb{CP}_q^1)}\mathcal{L}_n^*\,\cong\,\mathcal{L}_0=\mathcal{A}(\mathbb{CP}_q^1)$. Since the Haar state on the $C^*$-algebra $C(SU_q(2))$ restricted to $\mathcal{A}(\mathbb{CP}_q^1)$ is $SU_q(2)$-invariant, when composing with $\langle\,.\,,\,.\rangle_n$, we obtain a $\mathbb{C}$-valued inner product on each line bundle $\mathcal{L}_n$ which is $SU_q(2)$-invariant. Now, equip $\mathcal{L}_n$ with the standard holomorphic structure $\overline{\nabla}_0^{(n)}$. Consider the $(1,\,0)$-connection $\nabla^\partial$ constructed in \cite{KLvS11}. It turns out that the total connection $\nabla_{\mathrm{tot}}^{(n)}\,:=\,\nabla^\partial+\overline{\nabla}_0^{(n)}$ on $\mathcal{L}_n$ is unitary, i.e.,
\[
\langle\nabla_{\mathrm{tot}}^{(n)}\,\phi,\,\psi\rangle_n+\langle\phi,\nabla_{\mathrm{tot}}^{(n)}\,\,\psi\rangle_n\ =\ d\langle\phi,\,\psi\rangle_n .
\]
Consider the space $\mathcal{C}(\mathcal{L}_n)$ of unitary connections on $\mathcal{L}_n$. We have an ``adjoint'' coaction
\[
\Delta^{\mathcal{C}}_{(n)}\ :\ \mathcal{C}(\mathcal{L}_n)\ \longrightarrow\ \mathcal{A}(SU_q(2))\otimes \mathcal{C}(\mathcal{L}_n)
\]
given by
\begin{IEEEeqnarray}{lCl}\label{coaction}
\Delta^{\mathcal{C}}_{(n)}(\cdot)\ :=\ \big(m\circ(S\otimes\mathrm{id})\big)_{12}\circ(\mathrm{id}\otimes\Delta_n^{(1)})\circ(\mathrm{id}\otimes(\cdot))\circ\Delta_{(n)}
\end{IEEEeqnarray}
with $S$ being the antipode of $SU_q(2)$, and $m$ is the multiplication map. The connections $\nabla$ on $\mathcal{L}_n$ that are fixed by this coaction are precisely the $SU_q(2)$-invariant unitary connections. We have the following result.

\begin{lmma}[\cite{Lan12}]\label{landi}
The canonical connection $\nabla_{\mathrm{tot}}^{(n)}$ on $\mathcal{L}_n$ is the unique invariant
connection for the coaction $\Delta^{\mathcal{C}}_{(n)}$ in \eqref{coaction}.
\end{lmma}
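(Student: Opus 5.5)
The plan is to parametrize all unitary connections on $\mathcal{L}_n$ as perturbations of $\nabla_{\mathrm{tot}}^{(n)}$ and then show that $SU_q(2)$-invariance forces the perturbation to vanish. Fix the canonical connection $\nabla_{\mathrm{tot}}^{(n)}=\nabla^\partial+\overline{\nabla}^{(n)}_0$; it is unitary and, by construction from $X_\pm\triangleright$ (the left action of $\mathcal{U}_q(su(2))$, which commutes with the left coaction), it is $SU_q(2)$-invariant, so it is fixed by $\Delta^{\mathcal{C}}_{(n)}$. Any other unitary connection has the form $\nabla=\nabla_{\mathrm{tot}}^{(n)}+T$ with $T\in\mathrm{Hom}_{\mathcal{A}}(\mathcal{L}_n,\Omega^1\otimes_{\mathcal{A}}\mathcal{L}_n)$ skew-adjoint. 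Since $\Delta^{\mathcal{C}}_{(n)}$ is affine, with linear part the adjoint coaction on such homomorphisms, $\nabla$ is invariant if and only if $T$ is an invariant (coinvariant) element of $\mathrm{Hom}_{\mathcal{A}}(\mathcal{L}_n,\Omega^1\otimes_{\mathcal{A}}\mathcal{L}_n)$.

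Next, identify this Hom space. Since $\mathcal{L}_n$ is a rank-one projective line bundle with $\mathcal{L}_n^*\cong\mathcal{L}_{-n}$ and $\mathcal{L}_{-n}\otimes\mathcal{L}_n\cong\mathcal{A}$, we get $\mathrm{Hom}_{\mathcal{A}}(\mathcal{L}_n,\Omega^1\otimes\mathcal{L}_n)\cong\Omega^1=\Omega^{1,0}\oplus\Omega^{0,1}\cong\mathcal{L}_{-2}\omega_+\oplus\mathcal{L}_{2}\omega_-$ (or $\mathcal{L}_{2}\oplus\mathcal{L}_{-2}$, depending on convention), and this isomorphism is $SU_q(2)$-equivariant. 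Thus the invariant $T$ correspond to coinvariant elements of $\mathcal{L}_{\pm2}\subset\mathcal{A}(SU_q(2))$ under the left coaction $\Delta$.

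By the Peter--Weyl decomposition, $\mathcal{L}_k$ is spanned by matrix coefficients $t^{l}_{i,k/2}$ with $l\ge |k|/2$. An element $x$ with $\Delta(x)=1\otimes x$ is a multiple of $1$ by the Haar-state or right-invariance argument, and $1\in\mathcal{L}_0$ only. Hence for $k=\pm2$ there are no nonzero coinvariants, so $T=0$, which gives uniqueness.

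The main obstacle is verifying carefully that the Hom-space identification intertwines the adjoint coaction $\Delta^{\mathcal{C}}_{(n)}$ from \eqref{coaction} with the left coaction on $\mathcal{L}_{\pm2}$, including the antipode twist. I would check this on generators, writing $T(\xi)=\omega_+ f_+\xi+\omega_- f_-\xi$ and using the covariance of $\omega_\pm$ and $\Delta_{(n)}$. The unitarity constraint is not needed for uniqueness once invariance kills $T$; it only guarantees that $\nabla_{\mathrm{tot}}^{(n)}$ lies in $\mathcal{C}(\mathcal{L}_n)$.
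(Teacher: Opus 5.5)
The paper gives no proof of this lemma; it is imported from \cite{Lan12}, so there is no internal argument to compare against. Your proof is correct and follows essentially the standard line: connections on $\mathcal{L}_n$ form an affine space over $\mathrm{Hom}_{\mathcal{A}}(\mathcal{L}_n,\Omega^1\otimes_{\mathcal{A}}\mathcal{L}_n)\cong\Omega^1(\mathbb{CP}^1_q)$, and $\mathbb{CP}^1_q$ has no nonzero coinvariant one-forms. It even gives uniqueness among all left connections. Unitarity of $\nabla^{(n)}_{\mathrm{tot}}$ is needed only to know that it lies in $\mathcal{C}(\mathcal{L}_n)$.

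One thing needs fixing: the explicit parametrization you propose for the equivariance check. The map $T(\xi)=\omega_+f_+\xi+\omega_-f_-\xi$ is not left $\mathcal{A}$-linear for general $f_\pm$. Since the $\omega_\pm$ commute with the degree-zero algebra $\mathcal{A}$, linearity would force $f_\pm$ to commute with $\mathcal{A}$. The coefficient has to act from the right.

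Here is the correct form. By strong grading of $\mathcal{A}(SU_q(2))=\bigoplus_k\mathcal{L}_k$, choose $\eta_i\in\mathcal{L}_{-n}$ and $\xi_i\in\mathcal{L}_n$ with $\sum_i\eta_i\xi_i=1$. Then $x=\sum_i(x\eta_i)\xi_i$ with $x\eta_i\in\mathcal{A}$, so $T(x)=x\Theta$ where $\Theta:=\sum_i\eta_iT(\xi_i)\in\Omega^1(\mathbb{CP}^1_q)$. All products are taken in $\Omega^1(SU_q(2))$, into which $\Omega^1\otimes_{\mathcal{A}}\mathcal{L}_n$ embeds by multiplication, compatibly with the coactions.

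With this form, $T$ is a comodule map if and only if $\Theta$ is coinvariant:
\begin{itemize}
\item If $\Theta$ is coinvariant, then $\Delta^{(1)}(x\Theta)=\Delta(x)(1\otimes\Theta)$ shows $T$ is a comodule map.
\item Conversely, if $T$ is a comodule map, then $\Delta^{(1)}(\Theta)=\sum_i\Delta(\eta_i)\,\Delta^{(1)}(T(\xi_i))=\sum_i\Delta(\eta_i)\Delta(\xi_i)(1\otimes\Theta)=1\otimes\Theta$.
\end{itemize}

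The antipode in \eqref{coaction} causes no trouble. The condition $\Delta^{\mathcal{C}}_{(n)}(\nabla)=1\otimes\nabla$ is equivalent to $\nabla$ being a comodule map: apply the fixed-point identity to $x_{(2)}$, multiply the first leg on the left by $x_{(1)}$, and use $x_{(1)}S(x_{(2)})=\varepsilon(x)1$.

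Finally, write $\Theta=f_+\omega_++f_-\omega_-$ with $\omega_\pm$ left-invariant and $f_\pm\in\mathcal{L}_{\mp2}$ (in your convention). Coinvariance then means $\Delta(f_\pm)=1\otimes f_\pm$. Applying $\mathrm{id}\otimes\varepsilon$ gives $f_\pm=\varepsilon(f_\pm)1\in\mathcal{L}_0$, hence $f_\pm=0$. No Peter--Weyl or Haar-state argument is needed.
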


Therefore, if there exists a holomorphic connection $\nabla^{1,0}$ on $\mathcal{L}_n$, the total connection $\nabla^{1,0}+\overline{\nabla}_0^{(n)}$ is $SU_q(2)$-invariant if and only if $\nabla^{1,0}\,=\,\nabla^\partial$. This gives us the following result, completely analogous to the classical case of $\mathbb{C}\mathrm{P}^1$.

\begin{ppsn}\label{invariant}
Take $n\,\in\,\mathbb{Z}$ and consider the holomorphic line bundle $\mathcal{L}_n$ equipped with the standard holomorphic structure $\overline{\nabla}_0^{(n)}$. Under the requirement that the total connection $\nabla\,:\,\mathcal{L}_n\,\longrightarrow\,\Omega^1\otimes_{\mathcal{A}(\mathbb{CP}_q^1)}\mathcal{L}_n$ be $SU_q(2)$-invariant, the line bundle $\mathcal{L}_n$ admits a holomorphic connection if and only if $n\,=\,0$.
\end{ppsn}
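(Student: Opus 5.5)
The argument has two directions. For $n\,=\,0$ we exhibit a connection; for $n\,\neq\,0$ we show that the invariance requirement forces the unique candidate, which is not flat.

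\emph{The case $n=0$.} Take the canonical $(1,0)$-connection $\nabla^{\partial}(\xi)\,=\,q^{-n-2}\,\omega_+(X_+\triangleright\xi)$ from \cite[Eq.~(3.30)]{KLvS11}. It is a left $\partial$-connection on $\mathcal{L}_n$. By Lemma \ref{landi}, the total connection $\nabla^{(n)}_{\mathrm{tot}}\,=\,\nabla^\partial+\overline{\nabla}_0^{(n)}$ is the canonical $SU_q(2)$-invariant (unitary) connection. We computed in Section \ref{Sec4.1} that its curvature is $-q^{-n+1}[n]_q\,\omega_-\wedge\omega_+$. For $n=0$ we have $[0]_q\,=\,0$, so the curvature vanishes. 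Hence $\nabla^\partial$ is a holomorphic connection in the sense of Definition \ref{hol con}, and its total connection is $SU_q(2)$-invariant. This settles the ``if'' direction. By Theorem \ref{thm:splitimpliesconnectionbothway}, it also gives a holomorphic splitting of the jet sequence of $\mathcal{L}_0$.

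\emph{The case $n\neq 0$.} Suppose $\nabla^{1,0}$ is a holomorphic connection on $\mathcal{L}_n$ whose total connection $\nabla\,=\,\nabla^{1,0}+\overline{\nabla}_0^{(n)}$ is $SU_q(2)$-invariant.
\begin{enumerate}[$(i)$]
\item First we place $\nabla$ in the setting of Lemma \ref{landi}. That lemma concerns invariant \emph{unitary} connections, so we must justify reading the invariance requirement in the statement as invariance under the coaction $\Delta^{\mathcal{C}}_{(n)}$ on $\mathcal{C}(\mathcal{L}_n)$. This is the convention adopted in the paragraph preceding the proposition.
\item Lemma \ref{landi} then forces $\nabla\,=\,\nabla^{(n)}_{\mathrm{tot}}$.
\item Compare $(1,0)$-parts. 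Both connections have the same $(0,1)$-part $\overline{\nabla}_0^{(n)}$. Since $\Omega^1\otimes\mathcal{L}_n\,=\,(\Omega^{1,0}\otimes\mathcal{L}_n)\oplus(\Omega^{0,1}\otimes\mathcal{L}_n)$, we conclude $\nabla^{1,0}\,=\,\nabla^\partial$.
\item Apply flatness. The curvature of $\nabla$ is then $-q^{-n+1}[n]_q\,\omega_-\wedge\omega_+$. Since $\nabla^{1,0}$ is holomorphic, this must vanish.
\item Derive the contradiction. The form $\omega_-\wedge\omega_+$ is a nonzero generator of the free rank-one module $\Omega^{1,1}$. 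Also $[n]_q\,=\,(q^n-q^{-n})/(q-q^{-1})\,\neq\,0$ for $n\neq0$ and $q\in(0,1)$. So the curvature is nonzero, a contradiction.
\end{enumerate}

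\emph{Main obstacle.} The delicate step is $(i)$. Lemma \ref{landi} is stated for unitary connections, and an arbitrary holomorphic connection need not be unitary a priori. I would handle this by stating explicitly that invariance is meant in the sense of the adjoint coaction \eqref{coaction} on $\mathcal{C}(\mathcal{L}_n)$, as the sentence before the proposition does. Alternatively, one can argue that invariance of $\nabla^{1,0}$ alone pins it down: the difference $\nabla^{1,0}-\nabla^\partial$ is an invariant left module map $\mathcal{L}_n\to\Omega^{1,0}\otimes\mathcal{L}_n\cong\mathcal{L}_{n+2}$. Comparing the weights of the $U_q(\mathfrak{su}(2))$-actions, such an equivariant map must vanish, since the relevant Peter--Weyl isotypic components do not match. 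Either route yields $\nabla^{1,0}\,=\,\nabla^\partial$, after which the curvature formula concludes the proof.
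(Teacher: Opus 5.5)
Your main route is the paper's own proof. Lemma~\ref{landi}, read (as in the paragraph before the proposition) as uniqueness among connections fixed by the adjoint coaction \eqref{coaction}, forces $\nabla^{1,0}=\nabla^{\partial}$, and the curvature $-q^{-n+1}[n]_q\,\omega_-\wedge\omega_+$ then vanishes exactly when $n=0$. The paper applies Lemma~\ref{landi} to an arbitrary holomorphic connection without addressing unitarity. So on this route your argument is as complete as the paper's, and you are right to flag the point.

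The alternative you sketch for dispensing with unitarity is, however, wrong as justified. Equivariance alone does not force a map $\mathcal{L}_n\to\mathcal{L}_{n+2}$ to vanish. As comodules for the left $SU_q(2)$-coaction, $\mathcal{L}_n$ and $\mathcal{L}_{n+2}$ are sums of spin-$l$ representations over $l\in\{\tfrac{|n|}{2},\tfrac{|n|}{2}+1,\dots\}$ and $l\in\{\tfrac{|n+2|}{2},\tfrac{|n+2|}{2}+1,\dots\}$ respectively, so they share infinitely many isotypic components. Indeed $\xi\mapsto X_+\triangleright\xi$, out of which $\nabla^{\partial}$ is built, is a nonzero equivariant map between them.

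What kills $T:=\nabla^{1,0}-\nabla^{\partial}$ is invariance \emph{together with} left $\mathcal{A}$-linearity. Since $\mathcal{L}_{-n}\mathcal{L}_n=\mathcal{A}$, pick $y_i\in\mathcal{L}_{-n}$ and $x_i\in\mathcal{L}_n$ with $\sum_i y_ix_i=1$. Then $T(\xi)=\sum_i(\xi y_i)\,T(x_i)=\xi z$ with $z:=\sum_i y_i\,T(x_i)\in\mathcal{L}_2$, identifying $\Omega^{1,0}\otimes_{\mathcal{A}}\mathcal{L}_n\cong\mathcal{L}_{n+2}$ via the left-invariant form $\omega_+$ as you do. Equivariance says $\Delta(\xi)\Delta(z)=\Delta(\xi)(1\otimes z)$ for all $\xi\in\mathcal{L}_n$. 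Putting $\xi=x_i$, multiplying on the left by $\Delta(y_i)$ and summing gives $\Delta(z)=1\otimes z$. Hence $z=\varepsilon(z)1\in\mathcal{L}_0\cap\mathcal{L}_2=\{0\}$. In isotypic terms, the correct statement is that the invariant part of $\mathrm{Hom}_{\mathcal{A}}(\mathcal{L}_n,\mathcal{L}_{n+2})\cong\mathcal{L}_2$, i.e.\ its spin-$0$ component, is zero.

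You also need that invariance of the total connection implies invariance of $\nabla^{1,0}$. This holds because $\overline{\nabla}_0^{(n)}$ is invariant and the coaction preserves bidegree. With this repair the alternative is genuinely stronger than the paper's argument: it proves the proposition for all $SU_q(2)$-invariant total connections, not only unitary ones.
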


\begin{proof}
This follows from the discussion at the beginning of the section, together with the fact (Lemma \ref{landi}) that the only $SU_q(2)$-invariant connection on $\mathcal{L}_n$ is $\nabla^{\partial}+\nabla^{\overline\partial}\,,$ if we already fix $\nabla^{\overline\partial}\,\equiv\,\overline{\nabla}^{(n)}_0$.
\end{proof}

We end this article with the following question:
\smallskip

\noindent\textbf{Question:} What happens when we equip $\mathcal{L}_n$ with non-standard holomorphic structures such as in \cite{GLRM25}? Is \Pref{invariant} still true in (all) those cases?

\bigskip

\bigskip

\noindent{\sc I. Biswas} (\texttt{indranil.biswas@snu.edu.in, inrdranil29@gmail.com})\\
{\footnotesize Department of Mathematics\\
Shiv Nadar University\\
Dadri 201314, Uttar Pradesh, India}
\vspace*{1cm}

\noindent{\sc S. Guin} (\texttt{sguin@iitk.ac.in})\\
{\footnotesize Department of Mathematics\\
Indian Institute of Technology, Kanpur\\
Uttar Pradesh 208016, India}

\vspace*{1cm}

\noindent{\sc P. Kumar} (\texttt{pradip.kumar@snu.edu.in})\\
{\footnotesize Department of Mathematics\\
Shiv Nadar University\\
Dadri 201314, Uttar Pradesh, India}

\end{document}